\newcommand{\hcap}{\operatorname{hcap}} 
\newcommand{\dist}{\operatorname{dist}}
\newcommand{\diam}{\operatorname{diam}}
\newcommand{\ecap}{\operatorname{cap}}
\newcommand{\real}{\operatorname{Re}}
\newcommand{\imag}{\operatorname{Im}}
\newcommand{\inrad}{\operatorname{inrad}}
\newcommand{\Disk}{\mathbb{D}}
\newcommand{\D}{\Disk}
\newcommand{\Half}{\mathbb{H}}
\newcommand{\ball}{\mathcal{B}}
\newcommand{\ee}{\epsilon}
\newcommand{\eps}{\epsilon}
\newcommand{\Prob}{\mathbb{P}}
\newcommand{\PP}{\Prob}
\newcommand{\C}{\mathbb{C}}
\newcommand{\E}{\mathbb{E}}
\newcommand{\R}{\mathbb{R}}
\newcommand{\Z}{\mathbb{Z}}
\newcommand{\one}{\mathbf{1}}
\newcommand{\grid}{\mathcal{D}}
\newcommand{\F}{\mathcal{F}}
\newcommand{\ubs}{\mathcal U}
\newcommand{\edgeset}{\mathcal{E}}
\newcommand{\bd}{\partial}  
\newcommand{\PK}[3]{\lambda({#1},{#2};{#3})}
\newcommand{\PKa}{\lambda}
\newcommand{\LE}[1]{\Lambda\{#1\}}
\newcommand{\RLE}[1]{\overline\Lambda\{#1\} }
\newcommand{\rev}[1]{\overline{#1}}
\newcommand{\gam}{\gamma}
\newcommand{\bs}[1]{\mathcal{S}(#1)}
\newcommand{\vt}{\vartheta}
\newcommand{\fr}{\frac}
\newcommand{\vp}{\varphi}
\newcommand{\iy}{\infty}
\numberwithin{equation}{section}
\theoremstyle{plain}
\newtheorem{theorem}{Theorem}[section]
\newtheorem{proposition}[theorem]{Proposition}
\newtheorem*{proposition*}{Proposition}%{\bfseries}{}
\newtheorem{lemma}[theorem]{Lemma}
\newtheorem{corollary}[theorem]{Corollary}
\theoremstyle{remark}
\newtheorem*{remark*}{Remark}%{\itshape}{\rmfamily}
\newtheorem*{definition*}{Definition}%{\itshape}{\rmfamily}
\begin{document}

%\title{On the Rate of Convergence of Loop-Erased Random Walk to SLE$_2$}
%\date{Received: \today}
%
%\author{Christian Bene\v{s}\inst{1} \and Fredrik Johansson Viklund\inst{2} \and Michael J.~Kozdron\inst{3}}
%
%\institute{Department of Mathematics, Brooklyn College of the City University of New York, Brooklyn, NY 11210-2889  USA. E-mail: christian.benes@gmail.com \and Department of Mathematics, Columbia University, New York, NY 10027-6940 USA. E-mail: fjv@math.columbia.edu \and Department of Mathematics \& Statistics, University of Regina, Regina, SK S4S 0A2 Canada. E-mail: kozdron@stat.math.uregina.ca}
%
%\maketitle
%

\title[On the rate of convergence of LERW to SLE$_2$]{On the rate of convergence of loop-erased random walk to  SLE$_2$}
\date{\today}
\author[C.~Bene\v{s}]{Christian Bene\v{s}}
\address{C.~Bene\v{s}: Department of Mathematics, Brooklyn College of the City University of New York, Brooklyn, NY 11210-2889  USA}
\email{christian.benes@gmail.com}
\author[F.~Johansson Viklund]{Fredrik Johansson Viklund}
\address{F.~Johansson Viklund: Department of Mathematics, Columbia University, New York, NY 10027-6940 USA}
\email{fjv@math.columbia.edu}
\author[M.~J.~Kozdron]{Michael J.~Kozdron}
\address{M.~J.~Kozdron: Department of Mathematics \& Statistics, University of Regina, Regina, SK S4S 0A2 Canada}
\email{kozdron@stat.math.uregina.ca}
\keywords{Schramm-Loewner evolution, loop-erased random walk, Brownian motion, strong approximation, conformal invariance}
\subjclass[2000]{Primary 60J67; Secondary 82B41, 60J65}

\begin{abstract}
We derive a rate of convergence of the Loewner driving
function for planar loop-erased random walk to Brownian motion with speed
2 on the unit circle, the Loewner driving function for radial SLE$_2$. 
The proof uses a new estimate of the difference between the discrete
and continuous Green's functions that is an improvement
over existing results for the class of domains we consider.
Using the rate for the driving process convergence
along with additional information about SLE$_2$, we also obtain a rate of
convergence for the paths with respect to the Hausdorff distance.
\end{abstract}

\maketitle

%%%%%%%%
%%%%%%%%
\section{Introduction}\label{Intro}
%%%%%%%%
%%%%%%%%

The Schramm-Loewner evolution (SLE) is a one-parameter family of random 
planar growth processes constructed by solving the Loewner equation when the driving function is a one-dimensional Brownian motion. SLE was introduced by Schramm in~\cite{schramm} and has been shown to describe the scaling limits of a number of  two-dimensional discrete models from statistical mechanics including the Ising model~\cite{chelkak_smirnov,HDC-smirnov}, percolation~\cite{smirnov_perc,smirnov_CR}, loop-erased random walk~\cite{LSW-aop}, uniform spanning trees~\cite{LSW-aop}, the harmonic explorer~\cite{SchShef}, the discrete Gaussian free field~\cite{SchShef2}, and the $q=2$ random cluster model~\cite{smirnov_AnnMath}.  This has provided a means for developing a rigorous mathematical understanding of these models. SLE has also allowed a number of
long-standing open problems about planar Brownian motion to be solved,
notably, Mandelbrot's conjecture about the Hausdorff dimension of the
Brownian frontier~\cite{LSW-43} and the determination of the values of the 
Brownian intersection exponents~\cite{LawSW1,LawSW2,LawSW3}. Despite a rapid progress in the understanding of
questions involving SLE, there are still several fundamental open
problems. Some of these were communicated by Schramm
in~\cite{schrammICM}, in particular that of ``obtain[ing] reasonable
estimates for the speed of convergence of the discrete processes which
are known to converge to SLE.'' One of the motivations for this
question, besides its being of independent interest, is that results of
this type could lead to improved estimates for certain critical exponents; see the discussion in~\cite{schrammICM}.

The loop-erased random walk is a self-avoiding random walk obtained by chronologically erasing the loops of a simple random walk.
It was proved by Lawler, Schramm, and Werner
in~\cite{LSW-aop} that the scaling limit of loop-erased random walk in
a simply connected domain is SLE$_2$. Arguably the most important step in this proof is to show that the Loewner driving
function for the loop-erased random walk path converges to Brownian motion with speed 2, the Loewner driving function for SLE$_2$.

The primary purpose of this paper is to establish a rate for this convergence. To the best of our knowledge, this is the first instance of a formal derivation of a rate of convergence for any of the discrete processes
known to converge to SLE, although Smirnov has given without proof a rate of convergence for critical percolation crossing probabilities; see Remark~3 of~\cite{smirnov_perc}.

We believe that the results obtained in this paper could be very helpful in proving convergence rates for other models whose scaling limit is SLE and that much of the work is more or less directly applicable. Essentially only Section~\ref{MGsect}, where we obtain a rate of convergence for the particular martingale observable we used, is specific to loop-erased random walk. At the end of Section~\ref{HM_ROC_sect}, we briefly discuss an application of our methods to the FK-Ising model.

\subsection{Statement of the main result and outline of the paper}
Let $D \subsetneq \C$ be a simply connected domain with $0 \in D$, and
let $\psi_D: D \to \Disk$, where $\Disk$ is the open unit disk, be the unique conformal map with $\psi_D(0)=0$, 
$\psi_D'(0)>0$. Let $D^n$ be the $n^{-1}\Z^2$ grid domain approximation of $D$; that
is, the connected component containing $0$ in the complement of the closed faces of
$n^{-1} \Z^2$ intersecting $\partial D$. Let $\gamma^{n}$ denote the time-reversal of loop-erased random walk on $n^{-1} \Z^2$ started at $0$
and stopped when hitting $\bd D^n$. Note that $D^n$ is simply connected and let $\psi_{D^n}: D^n \to \D$ be the conformal map normalized as above. Let 
\[
W_{n}(t)=W_n(0)e^{i  \vartheta_n(t)}, \quad t \ge 0,
\]
denote the Loewner driving function for the curve $\tilde \gamma^n=\psi_{D^n}(\gamma^{n})$ parameterized by capacity and let $\inrad(D) = \sup\{r:\ball(0,r) \subset D\}$. The following
is our main result.

\begin{theorem}\label{the.theorem2}
Let $0<\epsilon<1/24$ be fixed, and let $D$ be a simply connected domain with $\inrad(D) = 1$. For every $T> 0$ there exists an
$n_0< \infty$ depending only on $T$ such that whenever $n > n_0$ there is a 
 coupling of $\gamma^{n}$ with Brownian motion $B(t)$,  $t\ge 0$, where $e^{iB(0)}$ is uniformly distributed on the unit circle, 
 with the property that
\begin{equation}\label{the.equation2}
\Prob \left( \sup_{0 \le t \le T} |W_n(t)-e^{iB(2t)}| > n^{-(1/24-\ee)} \right)  < n^{-(1/24 -\ee)}.
\end{equation}
\end{theorem}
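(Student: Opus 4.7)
\emph{Proof plan.} The natural strategy is to show that on a small time scale the Loewner driving function $\vartheta_n(t)$ has predictable increments matching those of a speed-$2$ Brownian motion up to a controllable error, and then to couple $\vartheta_n$ with a Brownian motion via a strong-approximation / Skorokhod-type argument.

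Concretely, I would fix a mesh $\Delta t$ (to be optimized against $n$) and study the increments $X_k = \vartheta_n((k+1)\Delta t) - \vartheta_n(k\Delta t)$. The domain Markov property of reverse LERW says that, conditional on $\F_{k\Delta t}$, the next segment of $\gamma^n$ is a reverse LERW in the slit domain $D^n \setminus \gamma^n[0, k \Delta t]$. After applying the radial Loewner map $g_{k\Delta t}$ this becomes a LERW in a discrete approximation of the slit unit disk, with tip at $e^{i\vartheta_n(k\Delta t)}$. Writing the conditional moments of $X_k$ as sums of boundary values against the appropriate discrete harmonic measure, the aim is to show $\E[X_k\mid \F_{k\Delta t}] = o(\Delta t)$ and $\E[X_k^2 \mid \F_{k\Delta t}] = 2\Delta t$ up to a polynomial error in $n^{-1}$. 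The quantitative input is the improved discrete-to-continuous Green's function estimate advertised in the abstract, which allows one to replace the discrete harmonic measure in the slit disk by its continuous counterpart; in the continuous limit this measure is uniform on the relevant circular arc and yields exactly the mean and variance of a speed-$2$ Brownian increment.

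With the conditional moment estimates in place, I would apply a Skorokhod embedding (or a KMT-style strong approximation) to the martingale part of $\vartheta_n$, so that the error per step scales like $\sqrt{\Delta t}$ plus the discretization error $n^{-\alpha}/\Delta t$ from the moment comparison; summing over the $T/\Delta t$ steps and balancing these three sources determines the optimal $\Delta t$ and forces the exponent $1/24-\epsilon$.

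The hard part will be obtaining the harmonic-measure comparison uniformly in the slit configuration and in the regularity of $\partial D$. The map $g_{k\Delta t}$ distorts badly near the tip, and $D^n$ itself can be very irregular near $\partial D$, so two independent regularity losses enter the error budget. One must combine the improved Green's function estimate with deterministic a priori bounds on conformal distortion (of Koebe / Beurling type), and handle separately the ``bad'' events where the tip comes close to itself or to $\bd D^n$ via crude but uniform estimates. It is the balancing of these several polynomial errors—Green's function error, tip regularity, and the Brownian modulus $\sqrt{\Delta t}$—that produces the rather small exponent $1/24$.
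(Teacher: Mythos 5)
Your high-level skeleton is the right one (moment estimates for increments of the driving function on a mesoscopic scale, followed by Skorokhod embedding), but there is a genuine gap in the middle step, and it is the crucial one. You propose to write the conditional moments of the driving-function increments ``as sums of boundary values against the appropriate discrete harmonic measure,'' compare to the continuous Poisson kernel via the Green's function estimate, and read off that the drift vanishes and the variance equals $2\Delta t$. This does not work as stated, for two reasons. First, the conditional law of the next step of the \emph{reverse} loop-erased walk given the past is not harmonic measure from the tip; it involves the ratio $G_{D_{j+1}}(0,\gamma_{j+1})/(\cdots)$ (Fomin-type weights), and there is no clean closed-form expression for the argument increment in terms of harmonic measure. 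Second, even granting a usable expression, extracting the first two moments of the Loewner increment from the law of the lattice step would require inverting the Loewner map near the tip, which is precisely where the map degenerates. What replaces both of these obstacles in the paper (following Schramm and \cite{LSW-aop}) is the \emph{martingale observable}: the ratio $M_j = H_j(v,\gamma_j)/H_j(0,\gamma_j)$ is an \emph{exact} martingale for the LERW filtration, one proves (via the improved Green's function estimate) that $M_j$ equals the continuous Poisson kernel $\PK{\psi(v)}{W(t_j)}{\Disk}$ up to a polynomial error, and then a Taylor expansion of the Poisson kernel through the Loewner equation expresses $M_m-M_0$ in terms of $\Delta_m$, $\Delta_m^2$ and $t_m$; evaluating at \emph{two} interior points $v$ gives two equations and yields both $|\E[\Delta_m]|$ and $|\E[\Delta_m^2-2t_m]|$ with error $O(R^{-(1/4-\ee)})$. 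Without this device your plan has no route from the Green's function estimate to the driving-function moments.

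Two smaller but real deviations from what actually works: (1) The paper does not use a fixed mesh $\Delta t$; it stops at the first time the capacity or argument change by a prescribed amount (the random times $m_k$ in \eqref{m}). Since the number of lattice steps corresponding to a fixed capacity increment is itself random and hard to control, the stopping-time definition is what keeps the steps uniformly bounded by $\eta = R^{-\alpha}$, which is the quantity the Skorokhod argument actually needs. (2) The driving function is $W_n(t)=W_n(0)e^{i\vartheta_n(t)}$, and nothing in your plan makes $W_n(0)$ close to uniform on the circle: the paper handles this separately (Proposition~\ref{HM_ROC}) by coupling the exit point of the underlying simple random walk with that of Brownian motion via KMT. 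Finally, a ``KMT-style'' approximation is not a drop-in replacement for Skorokhod here: the increments are only approximately a martingale difference sequence, not i.i.d., and one needs a martingale embedding together with the sharp maximal inequality (Lemma~\ref{haeusler}) to get the exponent all the way to $1/24-\ee$.
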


The proof of Theorem~\ref{the.theorem2}, which follows the general
strategy that was outlined in~\cite{schramm} and implemented in detail for the proof of convergence in~\cite{LSW-aop}, has four main components. Each of
these is covered in a separate section following
Section~\ref{Sect-notation}, in which we introduce some notation and
preliminary results. 

In Section~\ref{HM_ROC_sect} we derive a rate of convergence for the
distribution of the conformal image of the starting point of the time-reversed loop-erased random walk. This result, which is given in
Proposition~\ref{HM_ROC}, is an application of the strong approximation of
simple random walk due to Koml\'os, Major, and Tusn\'ady; see~\cite{kmt2}. 

Section~\ref{MGsect} covers the second main step of our proof which is to derive a
rate of convergence of the martingale observable for the
loop-erased random walk path. We use the same observable as in~\cite{LSW-aop}, but our method for proving convergence is
somewhat different. More precisely, we improve estimates of the
discrete Green's function from~\cite{KozL1} which, together with geometric
arguments, yield a rate of convergence for the observable; see
Theorem~\ref{MGthm}. Certain technical details in this section are deferred to the appendices. 

Next, in Section~\ref{keysect}, the Loewner equation is used to
transfer information about the observable to information about the
Loewner driving function for a piece of the loop-erased random walk path. In
particular, it is shown that the driving function is, up to explicit
error terms, a martingale on a certain mesoscopic scale that depends
on the rate of convergence of the observable; see Proposition~\ref{keyestimate}. 

The estimates from the previous sections, a sharp martingale maximal inequality, and the Skorokhod embedding theorem are then used in  Section~\ref{Sect-theproof} 
to find a coupling of the driving function with Brownian motion such that~\eqref{the.equation2} holds. This step concludes the
proof of Theorem~\ref{the.theorem2}. 

An informal summary of the main steps in our derivation of a rate of convergence is given in Section~\ref{discuss} where we also  discuss the optimality of the rate we obtained.

Finally, in Section~\ref{Sect-hausdorff} we use Theorem~\ref{the.theorem2} and a derivative estimate for radial SLE$_2$ (which we derive from an estimate for chordal SLE$_2$) to obtain a rate of convergence for the paths with respect to Hausdorff distance; see Theorem~\ref{hd.thm} for a precise statement.

In Appendix~\ref{greenappendix} we prove Theorem~\ref{main_green_thm} which is a particular Green's function estimate. This uniform estimate for the expected number of visits to $x$ by two-dimensional simple random walk starting at 0 before exiting a simply connected grid domain is an improvement of Theorem~1.2 from~\cite{KozL1} and is a result of independent interest.  It is then used in Appendix~\ref{prop42appendix} to help prove Proposition~\ref{KLtheorem}.

%%%%%%%%
%%%%%%%%
\section{Notation and preliminaries}\label{Sect-notation}
%%%%%%%%
%%%%%%%%

We now introduce the notation that will be used throughout this paper. General information about the basics of SLE and much of the necessary background material can be found in~\cite{SLEbook}. To facilitate the reading of this paper we have tried to be consistent with the notation used in~\cite{LSW-aop}. 

\subsection{Conformal maps and grid domains}\label{conformal}

Suppose that $\C$ denotes the complex plane, and write $\Disk =\{z: |z|<1\}$ for the unit disk in $\C$.  We write $\ball(z,r)=\{w : |w-z| < r\}$ and use the notation $\mathcal{A}(r,R)$ to denote the annulus $\{z: r < |z| < R\}$. For a set $D \subset \C$, we define the \emph{inner radius} of $D$ with respect to $z\in\C$ to be
\(
\inrad_z(D) = \sup\{r:\ball(z,r) \subset D\}
\)
and we write $\inrad(D)$ for $\inrad_0(D)$. 
We say that a domain $D \subset \C$ is a \emph{grid domain (with respect to $\Z^2$)}
if the boundary of $D$ consists of edges of the lattice $\Z^2$, and we
write $\grid$ for the set of all simply connected grid domains $D$
such that $0<\inrad(D)<\infty$; that is, those simply connected grid
domains $D \neq \C$ such that $0 \in D$. 

If $D$ is a simply connected domain containing the origin, we denote
by $\psi_D$ the unique conformal map of $D$ onto $\Disk$ with
$\psi_D(0)=0$ and $\psi_D'(0)>0$. 

For simplicity, we will call the value $\log \psi'_D(0)$ the \emph{capacity} (from 0) of $D^c$ and we denote this by $\ecap(D^c)$. In the particular case when $D=\D \setminus K$ for some compact set $K$,
we write $\ecap(K)$. 

We will also have occasion to consider a similar quantity for the upper half-plane $\Half$. Suppose that $K \subset \Half$ is bounded. If $K=\Half \cap \overline{K}$ and $\Half\setminus K$ is simply connected, then there exists a unique conformal transformation $\psi_K:\Half\setminus K \to \Half$ such that
$$\lim_{z \to \infty} [ \psi_K(z)-z] =0.$$
The \emph{half-plane capacity} (from infinity) of $K$ is defined by
$$\hcap(K) = \lim_{z \to \infty} z[ \psi_K(z)-z].$$
See Chapter~3 of~\cite{SLEbook} for further details.

We say that a proper subset  $A \subset \Z^2$  is \emph{connected} if every 
two points in $A$ can be connected by a nearest neighbor path
 staying in $A$, and is called \emph{simply connected} 
if both $A$ and $\Z^2 \setminus A$ are connected. The \emph{boundary} of $A$ 
is given by $\bd A = \{y \in \Z^2 \setminus A: |y-x| = 1 \text{ for
  some } x \in A \}$.

When $D$ is a grid domain, we
write $V(D)=D \cap \Z^2$ for the lattice points contained in
$D$ and note that $\bd V(D)$ is contained in $\bd D \cap \Z^2$.

If $D$ is a simply connected domain with a Jordan boundary, it is well-known that $\psi_D$ can be extended continuously to the boundary so that if $u \in \bd D$, then
 $\psi_D(u)=e^{i\theta_D(u)} \in \bd \Disk$. For our purposes, we will
 be concerned with grid domains which may not have a Jordan
 boundary. This means that if $D \in \grid$, then a boundary point
 may correspond under conformal mapping to several points on the boundary of the unit
 disk. To avoid using prime ends (see~\cite{pommerenke} for a full discussion), we adopt the
 convention from~\cite{LSW-aop} of viewing the boundary of $\Z^2
 \cap D$ as
 pairs $(u,e)$ of a point $u \in \bd D \cap \Z^2$ and an incident edge $e$ connecting $u$ to a neighbor in $D$. We write
 $V_{\partial}(D)$ for the set of such pairs, and if $v \in
 V_{\partial}(D)$, then the notation $\psi_D(v)$ means
 $\lim_{z \to u}\psi_D(z)$ along $e$, and this limit always exists. If $v=(u,e) \in V_{\partial}(D)$, then
 we write $A_v$ for the neighbors $w$ of $u$ such that the edge
 $(w,u)$ corresponds to the same limit on $\partial \D$ as $e$. 

Furthermore, if $B$ is planar
 Brownian motion and $T_D=\inf\{t \ge 0 : B_t \not \in D\}$ is the exiting time of $D$ by $B$, where $D$
 is a grid domain, it is
 known that the limit $\psi_D(B_{T_D})=\lim_{t \to T_D} \psi_{D}(B_t)$ exists
 almost surely.    
 
Finally, recall that if $D$ is a simply connected domain, then a \emph{crosscut} of $D$ is an open Jordan curve $C$ in $D$ such that $\overline{C}=C\cup \{w,z\}$ with $w$, $z \in \bd D$; note that $w$ and $z$ need not be distinct. For further details,  see~Section~2.4 of~\cite{pommerenke}.
 
 We state here Koebe's well-known distortion, growth, and one-quarter theorems. We will use these results extensively. See~\cite{pommerenke} for further discussion and proofs. We remark that we usually refer to the first two sets of inequalities as the Koebe distortion theorem.  

 \begin{lemma}\label{Koebedistthm}
Let $D$ be a simply connected domain and suppose $f: D \to \C$ is a conformal map. Set
$d=\dist(z, \partial D)$ for $z \in D$. If $|z-w| \le rd$ for some $0<r<1$, then
\begin{equation*}%\label{distortion*}
\fr{1-r}{(1+r)^3}|f'(z)| \le |f'(w)| \le \fr{1+r}{(1-r)^3}|f'(z)|,
\end{equation*}
\begin{equation*}%\label{growth}
\fr{|f'(z)|}{(1+r)^2}|z-w| \le |f(z)-f(w)| \le \fr{|f'(z)|}{(1-r)^2}|z-w|,
\end{equation*}
and 
\begin{equation*}
\label{quarter}
\mathcal{B}(f(z), d|f'(z)|/4) \subset f(D),
\end{equation*}
where $\mathcal{B}(w,\rho)$ denotes the open disk of radius $\rho$ around $w$.
 \end{lemma}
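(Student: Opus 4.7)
The plan is to reduce the three stated inequalities to the classical Koebe theorems for normalized univalent functions on $\D$, so that only standard facts need to be invoked. Set $d=\dist(z,\bd D)$, so that $\ball(z,d) \subset D$, and define $g: \D \to \C$ by
\[
g(\zeta) = \frac{f(z+d\zeta)-f(z)}{d\, f'(z)}.
\]
Because $f$ is conformal (hence univalent) on $D$, the map $g$ is univalent on $\D$, and a direct computation gives $g(0)=0$ and $g'(0)=1$. Thus $g$ belongs to the standard class $S$ of schlicht functions.

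The next step is to quote the classical Koebe distortion, growth, and one-quarter theorems for $g \in S$: for every $\zeta \in \D$,
\[
\fr{1-|\zeta|}{(1+|\zeta|)^3} \le |g'(\zeta)| \le \fr{1+|\zeta|}{(1-|\zeta|)^3}, \qquad \fr{|\zeta|}{(1+|\zeta|)^2} \le |g(\zeta)| \le \fr{|\zeta|}{(1-|\zeta|)^2},
\]
and $g(\D) \supset \ball(0,1/4)$. These are obtained in the standard way from Bieberbach's bound $|a_2|\le 2$ via the area theorem; I would simply cite Pommerenke \cite{pommerenke} rather than reprove them.

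To finish, given $w$ with $|z-w|\le rd$, I set $\zeta=(w-z)/d$ and note $r':=|\zeta|\le r$. Using $g'(\zeta)=f'(w)/f'(z)$ and $g(\zeta)=(f(w)-f(z))/(d f'(z))$, and the monotonicities
\[
\text{$\tfrac{1-s}{(1+s)^3}$ decreases, while $\tfrac{1+s}{(1-s)^3}$, $\tfrac{s}{(1-s)^2}$, and $\tfrac{s}{(1+s)^2}$ increase on $[0,1)$,}
\]
each bound at $s=r'$ is controlled by the corresponding bound at $s=r$ in the direction needed, yielding the first two displayed inequalities of the lemma. For the one-quarter statement, $g(\D)\supset \ball(0,1/4)$ translates through the definition of $g$ to $f(\ball(z,d))\supset \ball(f(z),\,d|f'(z)|/4)$, and since $\ball(z,d)\subset D$ this gives $f(D)\supset \ball(f(z),\,d|f'(z)|/4)$.

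There is no serious obstacle here, as the entire argument is classical; the only point requiring any care is the monotonicity check that allows replacing $r'=|z-w|/d$ by $r$ in the growth and distortion bounds, which is a short calculus exercise.
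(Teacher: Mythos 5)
The paper does not actually give a proof of Lemma~\ref{Koebedistthm}: it states these as ``Koebe's well-known distortion, growth, and one-quarter theorems'' and refers the reader to~\cite{pommerenke}. So there is no ``paper's proof'' to compare against. Your normalization argument---pass to $g(\zeta)=[f(z+d\zeta)-f(z)]/(d\,f'(z))\in S$ and apply the classical theorems for schlicht functions---is exactly the standard derivation and is correct, including the identifications $g'(\zeta)=f'(w)/f'(z)$ and $g(\zeta)=(f(w)-f(z))/(d\,f'(z))$ with $\zeta=(w-z)/d$, and the translation of $g(\D)\supset\ball(0,1/4)$ into $\ball(f(z),d|f'(z)|/4)\subset f(\ball(z,d))\subset f(D)$.

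One small slip worth noting: for the growth inequalities, the monotonicity you quote for $s/(1\pm s)^2$ is not the property that gets used, and applying it literally gives the wrong direction. The classical bound yields
\[
\frac{|z-w|\,|f'(z)|}{(1+|\zeta|)^2}\;\le\;|f(w)-f(z)|\;\le\;\frac{|z-w|\,|f'(z)|}{(1-|\zeta|)^2},
\]
with $|\zeta|=|z-w|/d$ kept as a factor of $|z-w|$ in the numerator; one then uses that $(1-s)^{-2}$ is increasing and $(1+s)^{-2}$ is decreasing to replace $|\zeta|$ by $r\ge|\zeta|$ in the denominators. (If instead you bound $|g(\zeta)|$ by $r/(1-r)^2$ via monotonicity of $s/(1-s)^2$, you get $|f(w)-f(z)|\le dr|f'(z)|/(1-r)^2$, which is weaker than the stated bound because $|z-w|\le rd$.) This is a cosmetic correction; the argument as a whole is sound.
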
 

Let $d_f(z)=\dist(f(z), \partial D')$, where $f$ is conformal and $D'=f(D)$.
The following result, which we will refer to as Koebe's estimate, is a consequence of the last lemma, see~\cite{garnett_marshall}:
 \begin{equation*}%\label{dist_bd}
 \frac{1}{4}d |f'(z)| \le d_f(z) \le 4 d |f'(z)|.
 \end{equation*}
 We will also make use of various versions and consequences of
the Beurling projection theorem, in both the continuous and discrete
setting. We state three versions here; see~\cite{SLEbook},~\cite{kesten}, and~\cite{benesnotes}, respectively. 

\begin{lemma}\label{DRbeurling}
Let $D$ be a simply connected domain, and let $\vp : D \to \D$ be a conformal map with $\vp(0)=0$. If $\beta$ is a
simple curve in $D$ with one endpoint on $\partial D$, then there
exists a constant $c < \infty$ such that
\begin{equation*}%\label{B1}
\diam  \vp(\beta) \le c \left[\frac{\diam  \beta}{ \inrad(D)} \right]^{1/2}.
\end{equation*} 
\end{lemma}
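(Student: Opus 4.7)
I would combine Koebe's distortion estimate (stated in the paper just above) with the conformal invariance of harmonic measure and the Beurling projection theorem. Let $R=\inrad(D)$, $r=\diam\beta$, $f=\vp^{-1}:\D\to D$, and $\tilde\beta=\vp(\beta)$. From the Schwarz lemma applied to $\vp$ on $\ball(0,R)\subset D$ and from Koebe's one-quarter theorem, $R\le|f'(0)|\le 4R$.

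\emph{Radial confinement via Koebe.} For any $z\in\tilde\beta$ one has $f(z)\in\beta$, and since $\beta$ has an endpoint on $\partial D$, $\dist(f(z),\partial D)\le r$. Koebe's estimate $(1-|z|)|f'(z)|\le 4\dist(f(z),\partial D)\le 4r$ combined with the distortion lower bound
\[
|f'(z)|\ge |f'(0)|\,(1-|z|)/(1+|z|)^3\ge R(1-|z|)/8
\]
yields $(1-|z|)^2\le Cr/R$. Hence $\tilde\beta$ is contained in the thin annulus $\{1-\eta\le|z|<1\}$ with $\eta:=C(r/R)^{1/2}$.

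\emph{Angular control via Beurling.} The radial confinement alone does not bound $\diam\tilde\beta$, since in principle $\tilde\beta$ could spiral near $\partial\D$. To rule this out, I match two harmonic-measure estimates across $\vp$. On the $D$-side, the Beurling projection theorem applied in $\ball(0,R)\subset D$ (where $\beta$ has radial projection of length at most $r$) gives
\[
\omega(0,\beta,D\setminus\beta)\le C(r/R)^{1/2},
\]
and by conformal invariance the same bound holds for $\omega(0,\tilde\beta,\D\setminus\tilde\beta)$. On the $\D$-side, extend $\tilde\beta$ to a crosscut $\hat\beta$ by appending a radial segment of length at most $\eta$ from its interior endpoint to $\partial\D$; this crosscut lies in the thin annulus with chord length at least $\diam\tilde\beta-\eta$. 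A geometric ``shadow'' argument---decomposing planar Brownian motion from $0$ by its angular position on $\partial\ball(0,1-\eta)$ and using the radial confinement to show that BM in the angular arc spanned by $\tilde\beta$ hits $\hat\beta$ with probability bounded below---yields $\omega(0,\tilde\beta,\D\setminus\tilde\beta)\ge c(\diam\tilde\beta-\eta)$. Combining the two estimates gives $\diam\tilde\beta\le C(\diam\beta/\inrad(D))^{1/2}$, as required.

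\emph{Main obstacle.} The delicate step is the lower bound $\omega(0,\tilde\beta,\D\setminus\tilde\beta)\gtrsim \diam\tilde\beta$. This is false for general simple curves in $\D$ with one endpoint on $\partial\D$ (a tight spiral has small harmonic measure from $0$ but large diameter), so one must really use the radial confinement from Koebe; this reduces $\tilde\beta$ to an essentially straight crosscut close to $\partial\D$, for which harmonic measure and chord length are comparable via the Beurling projection theorem.
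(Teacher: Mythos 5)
Your plan correctly identifies the two main ingredients---the Beurling estimate giving the upper bound $\omega(0,\beta,D\setminus\beta)\lesssim(\diam\beta/\inrad(D))^{1/2}$, and conformal invariance of harmonic measure transferring this to $\tilde\beta=\vp(\beta)$---and these do match the reference. But the remaining step, the lower bound $\omega(0,\tilde\beta,\D\setminus\tilde\beta)\gtrsim\diam\tilde\beta$, is exactly where you leave a gap: the ``geometric shadow argument'' is only asserted, not carried out, and the Koebe confinement does not deliver what you claim. A curve confined to the thin annulus $\{1-\eta\le|z|<1\}$ can still wind around arbitrarily many times, so it is \emph{not} reduced to ``an essentially straight crosscut,'' and the decomposition by angular position on $\partial\ball(0,1-\eta)$ does not by itself control the probability of hitting $\tilde\beta$ after entering the annulus.

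More importantly, the obstacle you name is not real, so the Koebe detour is unnecessary. The inequality $\omega(0,\tilde\beta,\D\setminus\tilde\beta)\ge c\,\diam\tilde\beta$ \emph{does} hold, with a universal constant, for any simple curve $\tilde\beta\subset\overline\D$ with one endpoint on $\partial\D$ and $0\notin\tilde\beta$; that lemma (stated in \cite{SLEbook}) combined with the Beurling upper bound and conformal invariance is the whole proof. Your proposed counterexample---a tight spiral near $\partial\D$---in fact has harmonic measure from $0$ close to $1$, not small: such a spiral nearly separates $0$ from $\partial\D$, and Brownian motion from $0$ must thread a long narrow corridor to escape, which happens with exponentially small probability. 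In general, if $z_0\in\partial\D$ is the boundary endpoint and $\delta=\diam\tilde\beta$, then $\tilde\beta\subset\ball(z_0,\delta)$ and $\tilde\beta$ crosses $\{|z-z_0|=\delta/2\}$; a direct harmonic measure estimate in the lens $\ball(z_0,\delta/2)\cap\D$ gives the lower bound with no Koebe input. So the unproved step in your argument is precisely the one you introduced the extra machinery to justify, and the motivation for that machinery rests on an incorrect picture.
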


Throughout this paper, for any set $A\subset\C, T_A=\inf\{t \ge 0 : B_t \not \in A\}$ will denote the exiting time of $A$ by Brownian motion $B$ and $\tau_A=\inf\{k \ge 0 : S_k \not \in A\}$ will denote the exiting time of $A$ by random walk $S$.

\begin{lemma}\label{beurling2}
There exists a constant $c >0$ such that for any $R \geq 1$, any
  $x\in\C$ with $|x|\leq R$, any $A\subset \C$ with $[0,R] \subset \{|z|:z\in A\}$,
\begin{equation*}%\label{beurling}
\Prob^x(\xi_R \leq T_{A^c})\leq c\left(|x|/R\right)^{1/2},
\end{equation*}
where $\xi_R = \inf\{t\geq 0:|B_t|\geq R\}$ and $T_{A^c} = \inf\{t\geq
0:B_t\in A\}$ is the first hitting time of $A$ by $B$, where $B$ is planar Brownian motion.
\end{lemma}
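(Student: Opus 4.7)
The plan is to reduce the bound to the extremal case of a radial slit via Beurling's projection theorem, and then to compute the resulting harmonic measure explicitly.

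By the Brownian scaling $B_t \mapsto R^{-1} B_{R^2 t}$ we may assume $R = 1$. Set $A^* := \{-|z|: z \in A\}$, the circular projection of $A$ onto the negative real axis. The hypothesis $[0,1] \subset \{|z|: z \in A\}$ translates to $A^* \supseteq [-1,0]$, so $T_{A^*} \le T_{[-1,0]}$ almost surely. Beurling's projection theorem (see, e.g., \cite{garnett_marshall}) then yields
\[
\Prob^x(\xi_1 \le T_A) \;\le\; \Prob^{|x|}(\xi_1 \le T_{A^*}) \;\le\; \Prob^{|x|}(\xi_1 \le T_{[-1,0]}).
\]
The first inequality reflects that circular projection of $A$, together with the choice of the antipodal point $|x|$ as starting point, maximizes the probability of reaching $\partial \Disk$ without first meeting the set.

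It remains to show $\Prob^r(\xi_1 \le T_{[-1,0]}) \le c\sqrt{r}$ for $r \in (0,1)$. The principal branch of $\sqrt{z}$ (with $\sqrt{1} = 1$) maps the slit disk $\Disk \setminus [-1,0]$ conformally onto the right half-disk $D^+ := \Disk \cap \{\real w > 0\}$, sending $r$ to $\sqrt{r}$, $\partial \Disk$ onto the outer arc $\{e^{i\theta} : |\theta| < \pi/2\}$, and both sides of the slit onto the imaginary diameter $\{iv : |v| < 1\}$. By conformal invariance of planar Brownian motion, the quantity of interest equals the harmonic measure in $D^+$ of the outer arc from $\sqrt{r}$. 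A further conformal transformation of $D^+$ onto the upper half-plane $\Half$ sends $\sqrt{r}$ to a point at height $\Theta(\sqrt{r})$ above the real axis (with the outer arc mapped to one half-line and the imaginary diameter to the other), whereupon the explicit Poisson kernel formula yields the required $O(\sqrt{r})$ bound.

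The delicate step is invoking Beurling's projection theorem in precisely this form—in particular, verifying that the antipodal starting point $|x|$ is extremal among points of modulus $|x|$ for the projected problem on the negative real axis. Once this is settled, the slit-disk computation is a routine harmonic measure calculation, and rescaling back from $R = 1$ restores the general bound $c(|x|/R)^{1/2}$.
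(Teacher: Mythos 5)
Your proof is correct and is essentially the standard argument for this kind of Beurling-type escape estimate. The paper itself does not prove Lemma~\ref{beurling2}; it simply cites it to Kesten's paper on discrete versus continuous Dirichlet problems, so there is no in-paper proof to compare against. Your route — scale to $R=1$, reduce to the extremal slit $[-1,0]$ via the circular-projection form of Beurling's theorem, then compute the harmonic measure of $\partial\Disk$ in $\Disk\setminus[-1,0]$ explicitly by mapping the slit disk to the right half-disk with $\sqrt{z}$ and then to a half-plane — is precisely how such bounds are obtained, and all the steps check out. The "delicate step" you flag (that the antipodal point $|x|$ is extremal for the projected set on the negative real axis) is not actually a gap: this is exactly the statement of Beurling's projection theorem in the form found in the Garnett--Marshall reference you cite, and it applies to the projection of $A\cap\overline{\Disk}$ (the parts of $A$ outside the disk are irrelevant for the event $\{\xi_1\le T_A\}$). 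Your asymptotic computation is also correct: mapping $D^+$ to a quadrant by $w\mapsto(w-i)/(w+i)$ and then squaring sends $\sqrt{r}$ to a point at angular position $\approx 4\sqrt{r}$ in $\Half$, with the outer arc of $D^+$ corresponding to $(-\infty,0]$, giving harmonic measure $\approx 4\sqrt{r}/\pi = O(\sqrt{r})$; for $r$ bounded away from $0$ the bound is trivial, so the estimate is uniform in $r\in(0,1)$.

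One small point worth making explicit for completeness: the hypothesis $[0,R]\subset\{|z|:z\in A\}$ forces $0\in A$, so the degenerate case $x=0$ is automatically handled ($T_A=0$). Otherwise the write-up is complete.
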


\begin{lemma}\label{beurling3}
There exists a constant $c >0$ such that for any $n \geq 1$, any
  $x\in\Z^2$ with $|x|\leq n$, any connected set $A\subset \Z^2$
  containing the origin and such that $\sup\{|z|:z\in A\}\geq n$,
$$\Prob^x(\Xi_n \leq \tau_{A^c})\leq c\left(|x|/n\right)^{1/2},$$
where $\Xi_n = \inf\{k\geq 0:|S_k|\geq n\}$ and $\tau_{A^c} = \inf\{k\geq
0:S_k\in A\}$ is the first hitting time of $A$ by $S$, where $S$ is simple random walk on $\Z^2$.
\end{lemma}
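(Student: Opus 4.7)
This is the discrete analogue of Beurling's projection theorem for planar Brownian motion (Lemma~\ref{beurling2}), and my plan is to prove it by reducing to that continuous estimate via a lattice-path extraction and a strong coupling.

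First I would use monotonicity of the hitting probability in the target set to reduce to the case where $A$ is a self-avoiding nearest-neighbor lattice path $A_0 = \{0 = z_0, z_1, \dots, z_m\}$ with $|z_m| \ge n$; such a path exists inside $A$ because $A$ is nearest-neighbor connected in $\Z^2$, contains $0$, and reaches distance at least $n$, and the inclusion $A_0 \subseteq A$ gives $\Prob^x(\Xi_n \le \tau_A) \le \Prob^x(\Xi_n \le \tau_{A_0})$. The polygonal arc $\tilde A_0 \subset \C$ formed by joining consecutive vertices of $A_0$ by unit line segments is connected in $\C$, contains $0$, and has a point of modulus at least $n$, so by the intermediate value theorem applied to $z \mapsto |z|$ on the connected set $\tilde A_0$ one has $[0,n] \subseteq \{|z| : z \in \tilde A_0\}$, and Lemma~\ref{beurling2} then yields $\Prob^x(\xi_n \le T_{\tilde A_0}) \le c(|x|/n)^{1/2}$.

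Next I would couple $S$ with a planar Brownian motion $B$ started at $x$ (with time scaled so that the coordinate step variances match) via a coordinatewise Skorokhod embedding $S_k = B_{\sigma_k}$, so that standard modulus-of-continuity estimates give a good event of probability at least $1 - n^{-10}$ on which $B$ moves by no more than $\log n$ between consecutive embedding times $\sigma_{k-1}, \sigma_k$ (uniformly over $k$ up to any fixed polynomial power of $n$). The main obstacle is the direction of the coupling: the event $\{S \text{ avoids } A_0\}$ does not directly imply $\{B \text{ avoids } \tilde A_0\}$, because $B$ may cross the one-dimensional arc $\tilde A_0$ between two consecutive Skorokhod times without either endpoint $S_{k-1}$ or $S_k$ landing in $A_0$. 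I would handle this by showing that on the good event any such crossing forces $S_k$ to lie within lattice distance $\log n$ of $A_0$, and then invoking a short-range hitting estimate (from within distance $r$ of a lattice path the SRW hits the path with probability bounded away from $0$ before traveling an additional distance $r$, provable by elementary discrete potential theory at scale $r$) to convert this proximity into an actual hit. A strong Markov iteration then translates the Brownian bound into $\Prob^x(\Xi_n \le \tau_{A_0}) \le c(|x|/n)^{1/2}$ with only polylogarithmic corrections to the exit radius, which are absorbed into the constant $c$.
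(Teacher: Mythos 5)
The paper does not prove Lemma~\ref{beurling3}; it is quoted from \cite{benesnotes}, where (as in Kesten's and Lawler's treatments) the argument is intrinsically discrete, built on harmonic-measure and capacity estimates at dyadic scales rather than on a coupling with Brownian motion. Your reduction to a self-avoiding lattice path $A_0\subset A$ and the observation that the continuous Beurling bound applies to the polygonal arc $\tilde A_0$ are both fine. The gap is in the transfer step, and it is a quantitative one that your sketch does not address: the strong-coupling error is $\Theta(\log n)$, and a bound of the form $c\bigl((|x|+C\log n)/n\bigr)^{1/2}$ is \emph{not} absorbable into the constant when $|x|\lesssim\log n$. For $|x|=O(1)$ the target is $cn^{-1/2}$, whereas your scheme can only deliver $c(\log n/ n)^{1/2}$ at that scale, off by a factor $(\log n)^{1/2}$. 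Patching this by first pushing the walk from $|x|$ out to radius $(\log n)^2$ using a fixed-scale hitting estimate iterated over $\log\log n$ annuli only recovers the sharp exponent $1/2$ if the fixed-scale estimate itself already has exponent $1/2$ --- which is precisely what one is trying to prove, so the bootstrap is circular unless one inputs an a priori discrete Beurling exponent.

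There is a second unresolved point: the ``strong Markov iteration'' that converts ``$S$ is within $\log n$ of $A_0$'' into ``$S$ hits $A_0$ with probability bounded away from $0$'' is asserted but not set up. To run it you would need to show that on the event $\{\Xi_n\le\tau_{A_0}\}$ the walk, conditioned on having come within $O(\log n)$ of $A_0$, subsequently escapes to radius $n$ without hitting $A_0$, and control this uniformly over the (random) location and configuration --- again a Beurling-type event from distance $O(\log n)$, for which the desired bound is what the lemma itself would give. For a connected target that is also recurrent-scale close, the single fixed-scale failure factor $(1-p)$ is not enough to close the loop without an honest multiscale argument, at which point one is essentially rederiving the discrete Beurling estimate directly, and the Brownian coupling is doing no work. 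In short: the skeleton is plausible as a heuristic, but as written it does not yield the stated exponent; the cited reference proves the lemma by a direct discrete multiscale argument, which is the route to take.
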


Note that in the last two lemmas it would be more natural to define $T_A$ and $\tau_A$ as hitting times rather than exiting times of $A$, as the statements could be slightly simpler than as we wrote them, but we are using this notation to be consistent with the rest of the paper, where thinking in terms of exiting times will be more natural.

\subsection{Green's functions}\label{greenZ}

If $D$ is a domain whose boundary includes a curve, 
let $g_D(z,w)$ denote the Green's function for $D$. 
If $z \in D$, we can define $g_D(z, \cdot)$ 
as the unique harmonic function on $D\setminus \{z\}$, 
vanishing on $\bd D$ (in the sense that $g_D(z,w) \to 0$ as $w \to w_0$
for every regular $w_0 \in \bd D$), with 
\begin{equation*}%\label{greencharharm}
g_D(z,w) = -\log|z-w| + O(1) \;\text{ as }\; |z-w| \to 0. 
\end{equation*} 
 In the case $D = \Disk$, we have
\begin{equation}\label{defngreen}
g_{\D}(z,w)=\log \left|\overline{w}z-1\right| - \log \left|w-z\right|.
\end{equation}
Note that $g_{\D}(0,z) = -\log |z|$ and $g_{\D}(z,w)=g_{\D}(w,z)$. 
An equivalent formulation of the Green's function 
can be given in terms of Brownian motion, namely
$g_D(z,w) = \E^z[\log|B_{T_D}-w|] - \log |z-w|$
for distinct points $z$, $w \in D$ where $T_D = \inf \{t : B_t \not\in D \}$. 
The Green's function is a well-known example of a conformal invariant; see
Chapter~2 of~\cite{SLEbook} for further details.
Note that the conformal map $\psi_D: D \to \D$ can be written as
\begin{equation*}%\label{riemannmap}
\psi_D(z) = \exp\{-g_D(z) + i\theta_D(z)\}, \;\;\; z \in D, 
\end{equation*}
where $g_D(z) =g_D(0,z)$ and $\theta_D(z) = \arg(\psi_D(z))$.  In particular, we can write $g_D(z)=-\log|\psi_D(z)|$.

Thus, suppose $D \in \grid$ is a grid domain with $\inrad(D)=R$. 
If $z \in D$ with $\dist(z,\bd D)=1$, then by a Beurling estimate $g_D(z)=O(R^{-1/2})$, and if $u \in V_{\partial}(D)$ and $z \in  A_u$, then $|\psi_D(u) - \psi_D(z)| = O(R^{-1/2})$ so that
\begin{equation}\label{thetaestimate}
\theta_D(u) = \theta_D(z) + O(R^{-1/2})
\end{equation}
in the sense that for each $u$ as above, we can choose a branch such that~\eqref{thetaestimate} holds.

Suppose that $S$ is a simple random walk on $\Z^2$ and
 $A$ is a proper subset
of  $\Z^2$. If $\tau_A = \min \{j \ge 0 : S_j \not\in A\}$,
 then  we let
\begin{equation*}\label{june7.2}
%\label{SRWgreen}
G_A(x,y) 
= \sum_{j=0}^{\infty} \Prob^x (S_j=y,\, \tau_A > j )
\end{equation*}
denote the Green's function for random walk on $A$. Note that $G_A(x,y)=G_A(y,x)$, and set $G_A(x) = G_A(x,0) = G_A(0,x)$.  In analogy with the Brownian motion case,
 we have
\begin{equation}\label{green1.1}
G_A(x,y)=\E^x[a(S_{\tau_A}-y)]-a(y-x) \, \text{ for }\, x, y\in A
\end{equation}
where $a$ is the potential kernel for simple random walk defined by
\[a(x)  =  \sum_{j=0}^\infty \left[\Prob^0(S_j=0) - \Prob^x( S_j=0)\right].\]
%\begin{equation*}%\label{1.1}
%G_A(x)=\E^x[a(S_{\tau_A})]-a(x) \, \text{ for }\, x\in A
%\end{equation*}
%where $a$ is the potential kernel for simple random walk defined by
%\[a(x)  =  \sum_{j=0}^\infty \left[\Prob^0(S_j=0) - \Prob^x( S_j=0)\right].\]
For details, see Proposition~1.6.3 of~\cite{LawlerGreen}.
It is known (see~\cite{FukU1} for details) that 
\begin{equation}\label{1.2}
a(x) = \frac{2}{\pi}\log|x| + k_0 +O(|x|^{-2})
\end{equation}
as $|x| \to \infty$ where $k_0 = (2\varsigma + 3\ln 2)/\pi$ and $\varsigma$ is Euler's constant.
 
Let $x \in A$, $w=(u,e) \in V_{\bd}(A)$, and recall that $A_w$ is defined to be the set of neighbors $y \in A$ of $u$ such that the edge $(y,u)$ and $e$ correspond to the same limit on $\partial \D$. We say that these edges correspond to $w$. We define $H_A(x,w)$ to be the probability that a simple random
walk starting at $x$ exits $A$ at $u$ using one of the edges determined by the vertices in $A_w$; that is, through one of the edges corresponding to $w$. If a boundary vertex $u$ corresponds to only one limit on $\partial \D$ we simply write $H_A(x,u)$. We note that a so-called
last-exit decomposition implies the identity
\begin{equation}\label{GFdecomp}
H_A(x,w) = \frac{1}{4} \sum_{A_w} G_A(x,y).
\end{equation}

\subsection{Loop-erased random walk}

We now briefly review the definition of loop-erased random walk. Further details may be found in Chapter~7 of~\cite{LawlerGreen}. The following loop-erasing 
procedure, which works for any finite simple random walk path in $\Z^2$, 
 assigns a self-avoiding path to each such random walk  path.
 
 Suppose that  $S = S[0,m] = [S_0, S_1, \ldots, S_m]$ is a simple
 random walk path of length $m$.   The loop-erased part of $S$, denoted $\LE{S}$, is constructed recursively as follows. If $S$ is already self-avoiding, 
set $\LE{S}=S$.  Otherwise, let $s_0 = \max\{j : S_j=S_0\}$, and for $i > 0$,
 let $s_i = \max\{j : S_j = S_{s_{i-1}+1} \}$. If we let $n = \min\{i : s_i=m\}$, then 
$\LE{S} = [S_{s_0}, S_{s_1}, \ldots, S_{s_n}]$. Observe that $\LE{S}(0)=S_0$ and $\LE{S}(s_n) = S_m$; that is, the loop-erased random walk has the same starting and ending points as the original simple random walk.

Also notice that the loop-erasing algorithm depends on the order of the points. 
If $a=[a_0,a_1, \ldots,a_k]$ is a lattice path, write $\rev{a} = [a_k,a_{k-1}, \ldots,a_0]$ for its reversal. Thus, if we define reverse loop-erasing by $\RLE{S} = \rev{\LE{\rev{S}}}$, then one can construct a path $S$ such that $\LE{S} \neq \RLE{S}$.  It is, however, a fact that both $\LE{S}$ and $\RLE{S}$ have the same distribution; see Lemma~3.1 of~\cite{LSW-aop}.  As such, since both $\RLE{S}$ (i.e., the time-reversal of loop-erased random walk) and $\LE{\rev{S}}$ (i.e., the loop-erasure of the time-reversal of  random walk) have the same distribution, we will not distinguish between the two and simply say that $\gamma$ is the \emph{time-reversal of loop-erased random walk} if $\gamma = \LE{\rev{S}}$.

In this paper, we will consider the loop-erasure of simple random walk started at $0$ and stopped when hitting the boundary of some fixed grid domain $D$. We call this loop-erased random walk in $D$. 

Loop-erased random walk has the important \emph{domain Markov property}, as is further discussed in Lemma~3.2 of~\cite{LSW-aop}. Suppose $\gamma=(\gamma_0, \gamma_1, \ldots, \gamma_l)$ is the loop-erasure of the time-reversal of a simple random walk that is started from $0$ and stopped when exiting $D$. Then if we condition on the first $j$ steps of $\gamma$, the distribution of the rest of the curve is the same as the time-reversal of loop-erased random walk in $D \setminus \gamma[0,j]$ conditioned to start at $\gamma_j$; that is, it is distributed as the loop-erasure of the time-reversal of a simple random walk started from $0$ conditioned to exit $D \setminus \gamma[0,j]$ through an edge ending in $\gamma_j$. 

\subsection{The Loewner differential equation and Schramm-Loewner evolution}

Suppose the unit disk $\Disk$ is slit by a non self-intersecting curve
$\gamma$ in a way such that $\Disk \setminus \gamma$ is simply
connected and contains $0$. Then we may parameterize the curve by capacity; that is, we choose a parameterization $\gamma(t)$ so that
the normalized conformal map
$g_t: \Disk \setminus \gamma[0,t] \to \Disk$ satisfies
\begin{equation*}
g_t(z)=e^{t}z + O(z^2),
\end{equation*}
around the origin for each $t \ge 0$. It is a theorem by Loewner that
the \emph{Loewner chain} $(g_t)$, $t \ge 0$, satisfies the Loewner differential equation 
\begin{equation}\label{LODE}
\partial_tg_t(z) = g_t(z) \frac{\xi(t)+g_t(z)}{\xi(t)-g_t(z)}, \quad g_0(z)=z,
\end{equation}  
where $\xi(t)=g_t(\gamma(t))$ is a unique continuous unimodular
function. 
The inverse $f_t=g_t^{-1}$ satisfies the partial differential equation
\begin{equation*}%\label{LPDE}
\partial_t f_t(z)=zf'_t(z) \frac{z+\xi(t)}{z-\xi(t)}, \quad f_0(z)=z.
\end{equation*}

Conversely, consider a function continuous on $[0, \infty)$, taking values in
$\partial \Disk$. It follows that~\eqref{LODE} can be solved up to time
$t$ for all $z$ outside $K_t=\{w : \tau(w) \le t\}$, where
$\tau(w)$ is the blow-up time when $g_t(w)$ hits $\xi(t)$; see~\cite{SLEbook} for precise definitions. We note that $g_t$ maps $\Disk \setminus
K_t$ conformally onto $\Disk$ for $t \ge 0$, and that $K_t$ is
called the hull of the Loewner chain. The function $\xi$ is called the driving function for the Loewner chain $(g_t)$ (or $(f_t)$). 

If the limit  
\begin{equation*}%\label{SLE_trace}
\gamma(t)=\lim_{r \to 1-}f_t(r\xi(t))
\end{equation*} 
exists for $t > 0$ and $t \mapsto \gamma(t)$ is continuous, we say that $(g_t)$
is generated by a curve. In this case, the connected components of $\D \setminus \gamma[0,t]$ and $\D \setminus K_t$ that contain the origin are the same. 

By taking $\xi(t)=\exp\{i B(\kappa t) \}$, where
$B(t)$ is standard Brownian motion and $\kappa >0$, we obtain \emph{radial Schramm-Loewner
evolution with parameter $\kappa$}, or radial SLE$_{\kappa}$ for
short. It is known that SLE$_\kappa$ is generated by a curve; see~\cite{LSW-aop} and~\cite{rohde_schramm}. 

We will also need to work with the chordal version of the Loewner equation which uses the upper half-plane $\mathbb{H}$ as uniformizing domain. In this case the equation reads
\[
\partial_t g_t(z) = \frac{2}{g_t(z)-\xi(t)}, \quad g_0(z)=z, \quad z \in \mathbb{H},
\]
where $\xi(t)$ is the (chordal) real-valued driving function. Similar properties hold in this case, too, but notice that the normalization is at $\infty$, which is a boundary point of $\mathbb{H}$. We refer the reader to 
\cite{SLEbook} for more details. This reference also contains a discussion (see, e.g., p.\ 94) of the reverse-time Loewner equation (or ``inverse flow'' as it is called there), which we shall use in Section~\ref{Sect-hausdorff}. 
%%%%%%%%
%%%%%%%%
\section{A rate of convergence for discrete harmonic measure}\label{HM_ROC_sect}
%%%%%%%%
%%%%%%%%

In this short section we prove a rate of convergence for the boundary hitting
distribution of simple random walk in a grid domain $D$ as the
inner radius increases. Our goal is to give a quantitative statement of the fact that the
image of the starting point of the time-reversed loop-erased random walk
path is close to uniform on $\partial \D$, when the inner radius of $D$ is large. 

As before, for a grid domain $D\in\grid$ we let $\psi=\psi_D$ denote the
conformal map from $D$ onto $\D$ such that $\psi(0)=0$, $\psi'(0)>0$.
We let $\tau_D$ and $T_D$ denote the hitting times of $\partial
D$ for simple random walk $S$ on $\Z^2$ and planar Brownian motion $B$,
respectively. Our goal is now to prove the following result, 
the proof of which is similar to that of Proposition~3.3 of~\cite{KozL1}. 

\begin{proposition}\label{HM_ROC} Let $0<\ee<1/4$ be fixed.
Let $D\in\grid$ be a grid domain. Let $S$ 
denote simple random walk on $\Z^2$ and let $B$ denote planar Brownian motion, both started from $0$. There exists $R_0 <\iy$ such that if $R= \inrad(D)$ and if $R > R_0$,
then there is a coupling of $S$ and $B$ such
that
\[
\PP\left(|\psi(S_{\tau_D})-\psi(B_{T_D})| > R^{-(1/4-\ee)}\right) < R^{-1/4}.
\] 
\end{proposition}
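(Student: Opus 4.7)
My plan is to couple $S$ and $B$ via the KMT strong approximation, transfer Euclidean closeness to closeness of conformal images using the Beurling projection theorem, and track where each Beurling application contributes to the final rate.

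First I would apply the Komlós--Major--Tusnády theorem to couple the simple random walk $S$ with planar Brownian motion $B$ (both started at $0$) so that, for $N = \lceil R^{2+\eta}\rceil$ with small $\eta > 0$,
\[
\Prob\left(\sup_{0\le k \le N} |S_k - B_k| > c_1 \log N\right) \le c_2 N^{-10}.
\]
Combining this with a standard Beurling-type exit-time estimate (namely that $\tau_D \vee T_D$ is of order $R^2$ with good tails, since $\inrad(D)=R$), I get a good event $E$ on which both processes have exited $D$ by time $N$ and $|S_k-B_k|\le c_3 \log R$ throughout. The probability of the complement is at most $R^{-M}$ for arbitrarily large $M$, so this contribution is negligible compared to $R^{-1/4}$.

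Next, working on $E$ and assuming without loss of generality that $T_D \le \tau_D$ (the reverse case is symmetric via interchanging roles), I note that $B_{T_D} \in \partial D$ and $\dist(S_{T_D},\partial D) \le c_3 \log R$. Translating $S_{T_D}$ to the origin and applying the discrete Beurling estimate of Lemma~\ref{beurling3}, I obtain that with conditional probability at least $1 - c_4(\log R/\Delta)^{1/2}$,
\[
|S_{\tau_D} - S_{T_D}| \le \Delta.
\]
Choosing $\Delta = R^{1/2+\epsilon'}$ for a small $\epsilon' < \epsilon/2$ yields an exceptional probability of order $R^{-1/4 - \epsilon'/2}$, which is the dominant error contribution and determines the $R^{-1/4}$ bound in the conclusion. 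The loop-erasure $\beta$ of the SRW path from $S_{T_D}$ to $S_{\tau_D}$ is then a simple curve in $\bar D$ of diameter at most $\Delta$ with one endpoint on $\partial D$, and Lemma~\ref{DRbeurling} gives
\[
\diam \psi(\beta) \le c_5 (\Delta/R)^{1/2} \le c_5 R^{-1/4+\epsilon'/2}.
\]

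Finally I need to bridge $\psi(S_{T_D})$ and $\psi(B_{T_D})$: although $|S_{T_D} - B_{T_D}| \le c_3 \log R$ is small in Euclidean distance, two Euclidean-close points on or near $\partial D$ could a priori correspond to very different prime ends (think of a slit), so this step is the main technical obstacle. The resolution is that the KMT coupling forces $|S_k - B_k| \le c_3 \log R$ at \emph{every} integer time $k\le T_D$, so the continuous path $B[0,T_D]$ and the polygonal path of $S[0,T_D]$ stay within a tube of width $O(\log R)$ of each other; combined with the fact that boundary features of a grid domain have lattice-scale thickness, this rules out $S_{T_D}$ and $B_{T_D}$ being separated by such an obstruction on a high-probability event. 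I would then exhibit a connected set in $\bar D$ of Euclidean diameter $O(\log R)$ joining $S_{T_D}$ to an edge representative $(u,e) \in V_\partial(D)$ of the prime end where $B$ exits, and apply Lemma~\ref{DRbeurling} once more to get $|\psi(S_{T_D}) - \psi(B_{T_D})| \le c R^{-1/4+o(1)}$. Summing all error contributions and absorbing logarithmic factors into $R^{\epsilon}$ gives the asserted bound $|\psi(S_{\tau_D}) - \psi(B_{T_D})| \le R^{-(1/4-\epsilon)}$ with failure probability less than $R^{-1/4}$.
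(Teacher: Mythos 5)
Your proposal uses the same main ingredients as the paper --- the KMT strong approximation, the discrete and continuous Beurling estimates, and the conformal distortion bound of Lemma~\ref{DRbeurling} --- and your exponent bookkeeping (exceptional probability $\approx \Delta^{-1/2}$ balanced against $\diam\psi(\beta)\approx(\Delta/R)^{1/2}$, giving $\Delta\approx R^{1/2}$ and exponent $1/4$) agrees with what the paper does. However, there are two gaps.

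First, you invoke the discrete Beurling estimate (Lemma~\ref{beurling3}) for $S$ started from $S_{T_D}$, which implicitly uses the strong Markov property of $S$ at time $T_D$. But $T_D$ is a stopping time for $B$, not for $S$, and under the KMT coupling the pair $(S,B)$ is \emph{not} jointly Markov, so conditioning on the joint past up to $T_D$ leaks information about the future of $S$. The paper flags exactly this difficulty and circumvents it by introducing the stopping times $\nu_B$ (for $B$) and $\nu_S$ (for $S$) separately.

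Second, and more seriously, the bridging step for $|\psi(S_{T_D})-\psi(B_{T_D})|$ is not justified. The claim that ``boundary features of a grid domain have lattice-scale thickness'' is false: a grid domain can contain slits of zero thickness (both sides of the same lattice edge lie on $\partial D$), and two points at Euclidean distance $O(\log R)$ on opposite sides of such a slit correspond to boundary prime ends whose images under $\psi$ can be order $1$ apart. The tube picture does not by itself rule out $S_{T_D}$ and $B_{T_D}$ lying in different pockets. The paper sidesteps this entirely by a more symmetric construction: it defines $\eta$ as the \emph{first} time either process comes within $2c_0\log R$ of $\partial D$, so that at $t=\eta$ both $B_\eta$ and $S_\eta$ are still at distance at least $2c_0\log R - O(1)$ from $\partial D$ while $|B_\eta - S_\eta|\le c_0\log R$; hence the segment $[B_\eta,S_\eta]$ lies in $D$ and the two are in the same pocket. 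One then applies Beurling from $B_{\nu_B}$ and $S_{\nu_S}$ (both $\le\eta$) to localize the two exit points in overlapping neighborhoods $Q_B$ and $Q_S$, separates both $B_{T_D}$ and $S_{\tau_D}$ from $0$ by a single short crosscut, and concludes via Lemma~\ref{DRbeurling} and conformal invariance of harmonic measure --- rather than via the loop-erasure of a random-walk segment. If you want to salvage your variant you would need to replace the ``tube + thickness'' heuristic with an argument of this type that forces both processes into a common pocket at a genuine $S$-stopping time before either exits.
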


Recall that $\psi(B_{T_D})$ is uniformly distributed on $\partial \D$.
Note that $S_{\tau_D}$ is viewed as an element of $V_{\partial}(D)$;
see Section~\ref{conformal}.
Since the error term in Proposition~\ref{HM_ROC} only depends
on the inner radius, the result when applied to the $n^{-1}\Z^2$ approximation
of a given simply connected domain is independent
of the boundary regularity of the domain that is being approximated.

To prove Proposition~\ref{HM_ROC} we shall use the strong approximation 
of Koml\'os, Major, and Tusn\'ady in a form given in~\cite{KozL1}. The approximation in this form is a non-trivial consequence of the more general Theorem \ref{kmt} below. A technical difficulty is that the joint process $(S,B)$ does not have the Markov property in this coupling, although, of course, each of $S$ and $B$ separately has it. This is the reason for introducing the stopping times $\nu_B$ and $\nu_S$ in the proof below.
In the following, $S$ is defined by linear interpolation for non-integer $t$.  
\begin{lemma}\label{KMTthm1}
There exists $c_0<\iy$ and a coupling of planar Brownian motion $B$ and simple
  random walk $S$ on $\Z^2$, both started from $0$, such that
\begin{equation}\label{KL_KMT}
\PP\left(\sup_{0\le t \le \sigma_R}|S_t-B_t/\sqrt{2}| \ge c_0 \log R \right) = O(R^{-10}),
\end{equation}
where 
\[
\sigma_R=\inf \left\{t: \min\left\{\sup_{0 \le s \le t}|S_s|, \sup_{0 \le s \le t}|B_s|\right\} \ge R^8\right\}.
\]
\end{lemma}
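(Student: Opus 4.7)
My plan is to reduce the two-dimensional coupling to two independent one-dimensional couplings and then invoke the classical KMT strong-approximation theorem. Writing $S_n=(X_n,Y_n)$ for simple random walk on $\Z^2$, I would pass to the rotated coordinates $U_n=X_n+Y_n$ and $V_n=X_n-Y_n$. A direct inspection of the four possible SRW increments shows that each joint step of $(U,V)$ is uniformly distributed on $\{(\pm 1,\pm 1)\}$; hence $U$ and $V$ are \emph{independent} simple random walks on $\Z$ with $\pm 1$ steps, and their always-equal parity guarantees $X_n,Y_n\in\Z$. To construct the coupling of the lemma I would then couple $U$ and $V$ separately to independent one-dimensional standard Brownian motions $\widetilde U$ and $\widetilde V$ and define
\[
B_t/\sqrt 2=\left(\frac{\widetilde U_t+\widetilde V_t}{2},\,\frac{\widetilde U_t-\widetilde V_t}{2}\right).
\]
A short covariance computation verifies that the resulting $B$ is a standard planar Brownian motion, and the triangle inequality gives $|S_t-B_t/\sqrt 2|\le |U_t-\widetilde U_t|+|V_t-\widetilde V_t|$ for all $t\ge 0$, with $S$ extended to non-integer times by linear interpolation as in the statement.

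Next, I would apply the one-dimensional KMT theorem separately to each of the pairs $(U,\widetilde U)$ and $(V,\widetilde V)$. In its standard form it asserts that for every $p>0$ there is a constant $C_p$ such that for all integers $N\ge 2$,
\[
\PP\!\left(\sup_{0\le t\le N}|U_t-\widetilde U_t|>C_p\log N\right)\le C_pN^{-p},
\]
and similarly for $V$. Choosing a deterministic horizon $N=R^M$ with $M$ a sufficiently large integer (take $M=20$) and $p$ large enough that $C_pN^{-p}=O(R^{-20})$, a union bound and the coordinate-wise estimate above yield
\[
\PP\!\left(\sup_{0\le t\le N}|S_t-B_t/\sqrt 2|>c_0\log R\right)=O(R^{-20})
\]
for an appropriate constant $c_0=c_0(M,p)$.

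The remaining, and really only, technical step is to pass from the deterministic horizon $N$ to the random stopping time $\sigma_R$. On the event $\{\sigma_R>N\}$ at least one of $S$ or $B$ must stay inside the ball of radius $R^8$ throughout $[0,N]$. By Brownian scaling,
\[
\PP\!\left(\sup_{0\le t\le N}|B_t|<R^8\right)=\PP\!\left(\sup_{0\le s\le 1}|B_s|<R^{-2}\right),
\]
which is at most $\exp(-cR^4)$ by a standard Dirichlet eigenvalue or heat-kernel bound on the disk; an analogous confinement estimate, obtained by iterating the exit probability from a ball of radius $R^8$ over disjoint intervals of length of order $R^{16}$, gives a bound of the same order for $S$. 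Both are much smaller than $R^{-10}$, so one final union bound absorbs $\{\sigma_R>N\}$ into the estimate above and yields the lemma. The coordinate rotation at the start is an algebraic convenience to avoid invoking a multidimensional KMT statement; the truly nontrivial ingredient is the one-dimensional KMT theorem itself, which we use as a black box.
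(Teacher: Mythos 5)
Your proof is correct and follows the route the paper relies on via citation: the lemma is quoted from Kozdron--Lawler (with the two-dimensional KMT statement, Theorem~\ref{kmt}, in turn cited from Auer), and those sources build the planar coupling exactly as you do, by rotating to the independent coordinates $U=X+Y$, $V=X-Y$, applying the one-dimensional Koml\'os--Major--Tusn\'ady theorem to each, and transferring back. Your passage from the random horizon $\sigma_R$ to a deterministic one via the confinement estimates (cf.\ Lemma~\ref{constrained}) is likewise the standard step, so you have supplied the details the paper delegates to the references.
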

Note that if $R=\inrad(D)$, then in view of Lemma~\ref{beurling2}
\begin{equation}\label{sigma_large}
\PP(\sigma_R < T_D) = O(R^{-7/2})
\end{equation}
and similarly for $\tau_D$.

\begin{proof}[Proof of Proposition~\ref{HM_ROC}] Write $R=\inrad(D)$ and
 let $S$ and $B'$ be simple random walk on $\Z^2$ and planar Brownian motion,
 respectively, both started from $0$. By Lemma~\ref{KMTthm1} we may
 couple $S$ and $B'$ so that~\eqref{KL_KMT} holds. Set $B=B'/\sqrt{2}$ and define
\[
\eta=\inf\{t \ge 0: \min\{\dist(S_t, \partial D), \dist(B_t, \partial D)\} \le 2c_0 \log R\},
\]
where $c_0$ is the constant from Lemma~\ref{KMTthm1}.
Let 
\[
\mathcal{E}_1 = \{|S_{\eta}-B_{\eta}| \le c_0 \log R\} \cap \left\{\sup_{0\le t \le \sigma_R}|S_t-B_t| < c_0 \log R \right\}.
\] 
Then it follows from~\eqref{KL_KMT} and~\eqref{sigma_large} that $\PP(\mathcal{E}_1^{c})=O(R^{-7/2})$. Indeed, since $\eta < T_D$ we have
\[
\PP(\mathcal{E}_1^{c})  \le \PP(\mathcal{E}_1^{c}, \, \sigma_R < T_D) + \PP(\mathcal{E}_1^{c}, \, \sigma_R \ge \eta) 
 \leq  O(R^{-7/2}) + O(R^{-10}).
\]
Define the stopping times
\begin{align*}
\nu_{B} &=\inf\{t \ge 0 : \dist(B_t, \partial D) \le 3c_0\log R \} \;\; \text{and}\\
\nu_{S} &=\min\{j \ge 0 : \dist(S_j, \partial D) \le 3c_0\log R \}.
\end{align*}
On $\mathcal{E}_1$ we clearly have
$\max\{\nu_{B}, \nu_S\} \le \eta < \min\{T_D, \tau_D\}$. 
Let $0 < \alpha < 1$ and let $\mathcal{E}^B_2 \subset \mathcal{E}_1$ be the event that $\mathcal{E}_1$ occurs and that $B$ hits $\partial D$ before exiting the ball $\mathcal{B}(B_{\nu_B}, 3c_0R^{\alpha}\log R)$. By using the strong Markov property of $B$ together with Lemma~\ref{beurling2} we see that $\PP(\mathcal{E}^B_2)\ge 1- O(R^{-\alpha/2})$. Let $Q_B$ be the component of $\mathcal{B}(B_{\nu_B}, 4c_0R^{\alpha}\log R) \cap D$ that contains the point $B_{\nu_B}$. On the event $\mathcal{E}_2^B$ we have that $|B_{\eta}-S_{\eta}| \le c_0\log R$ and that $Q_B$ contains a ball of radius $(3/2)c_0\log R$ around $B_{\eta}$. In particular, $Q_B$ contains $B_{\eta}, S_{\eta}$, and $B_{T_D}$.

We define the event $\mathcal{E}_2^S$ and the set $Q_S$ by replacing $B$ with $S$ in the last paragraph. By the strong Markov property of $S$, using Lemma~\ref{beurling3} we have that $\PP(\mathcal{E}^S_2)\ge 1- O(R^{-\alpha/2})$. On the event $\mathcal{E}_2^B \cap \mathcal{E}_2^S$ the set $Q_B \cap Q_S$ is non-empty and contains the points $B_\eta$ and $S_{\eta}$. 

Consequently, with probability at least $1-O(R^{-\alpha/2})$, the pair of boundary hitting points (in the sense of prime ends) $B_{T_D}$ and $S_{\tau_D}$ can be separated from $0$ in $D$ by a crosscut with length at most $c R^{\alpha}\log R$ for a constant $c <\infty$. Let $\beta$ be such a crosscut 
and let $F \subset \partial D$ be
the part of $\partial D$ that is separated from $0$ by $\beta$. By Lemma~\ref{DRbeurling}, if $R$ is sufficiently large, the harmonic
measure of $F$ from $0$ in $D$ is bounded above by 
$c(R^{\alpha-1}\log R)^{1/2}$ for some constant $c<\iy$. Hence, by
conformal invariance of harmonic measure, the length of the interval
$I=\{z \in \partial \D: \psi^{-1}(z)\in F\}$ satisfies the same
bound (with a different constant). Since $\psi(S_{\tau_D})$ and $\psi(B_{T_D})$ both are contained in $I$, the proof is completed by choosing $\alpha$ such that $(\alpha-1)/2=-\alpha/2$; that is, choose $\alpha=1/2$. 
\end{proof}

\begin{remark*}
We believe that Proposition~\ref{HM_ROC} and part of the work from Section~\ref{keysect} should be the main elements needed to derive a convergence rate for Smirnov's observable for the FK-Ising model; see~\cite{smirnov_ICM}. This, together with the work done in Sections~\ref{keysect},~\ref{Sect-theproof}, and~\ref{Sect-hausdorff} should suffice to give a rate of convergence for the FK-Ising model.
\end{remark*}

%%%%%%%%
%%%%%%%%
\section{A rate of convergence for the martingale observable}\label{MGsect}
%%%%%%%%
%%%%%%%%

The purpose of this section is to provide a rate of convergence for
the martingale observable. This result is given in Theorem~\ref{MGthm}
and will then be used  in Section~\ref{keysect}. Recall that if $D \in
\grid$ is a grid domain, then $\psi_D: D \to \D$ is the conformal map
of $D$ onto $\D$ satisfying  $\psi_D(0)=0$, $\psi_D'(0)>0$. 

\begin{theorem}\label{MGthm}
Let $0<\epsilon <1/4$ and let $0<\rho<1$ be fixed. There exists $R_0 < \infty$ such that the following holds.
Suppose that $D \in \grid$ is a  grid domain with $\inrad(D)=R$, where $R > R_0$.  Furthermore, suppose that $x \in D \cap \Z^2$ with $|\psi_D(x)| \le \rho$ and $u \in V_{\bd}(D)$. If both $x$ and $u$ are accessible by a simple random walk starting from $0$, then
\begin{equation}\label{MGthm.eq}
\frac{H_{D}(x,u)}{H_{D}(0,u)} = \frac{1-|\psi_{D}(x)|^2}
{|\psi_{D}(x)-\psi_{D}(u)|^2} \cdot [\; 1 + O(R^{-(1/4-\epsilon)})  \;].
\end{equation}
\end{theorem}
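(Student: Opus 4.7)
The overall plan is to reduce the problem, via the last-exit decomposition, to comparing the discrete and continuous Green's functions, and then to recognize the right-hand side of \eqref{MGthm.eq} as a Poisson-kernel ratio on $\D$ arising from a boundary expansion of the continuous Green's function. Specifically, \eqref{GFdecomp} gives
\[
\frac{H_D(x,u)}{H_D(0,u)} = \frac{\sum_{y \in A_u} G_D(x,y)}{\sum_{y \in A_u} G_D(0,y)}.
\]
Since $|A_u| \le 4$ and both harmonic measures are positive by the accessibility hypothesis, it suffices to control the pointwise ratio $G_D(x,y)/G_D(0,y)$ uniformly in $y \in A_u$ and then to absorb the summation.

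For the discrete-to-continuous step I would invoke the sharpened Green's function estimate of Theorem~\ref{main_green_thm} (applied through Proposition~\ref{KLtheorem} of Appendix~\ref{appendixA}), which I expect to yield a multiplicative comparison $G_D(\cdot, y) = (2/\pi)\, g_D(\cdot, y)\,[\,1 + O(R^{-(1/4-\epsilon)})\,]$ uniformly for source points with $|\psi_D(\cdot)|\le\rho$ and for $y$ at lattice distance one from $\bd D$. By conformal invariance, $g_D(x,y) = g_\D(\psi_D(x),\psi_D(y))$ with $g_\D$ given explicitly by \eqref{defngreen}. Writing $z = \psi_D(x)$, $u^* = \psi_D(u) \in \bd\D$, and $\psi_D(y) = u^*(1-\delta_y)$, \eqref{thetaestimate} together with the Koebe estimates (Lemma~\ref{Koebedistthm}) gives $|\delta_y| = O(R^{-1/2})$; since $y$ is one lattice step from $u$ and Koebe's one-quarter theorem forces $\psi_D(y)$ to lie at distance $\asymp |\psi_D'(y)|$ from $\bd \D$, one also has $\real \delta_y \asymp |\delta_y|$. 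A first-order expansion of \eqref{defngreen} around $w = u^*$ then produces
\[
g_\D(z,\psi_D(y)) = \frac{(1-|z|^2)\,\real\delta_y}{|u^*-z|^2} + O(|\delta_y|^2), \quad g_\D(0,\psi_D(y)) = \real\delta_y + O(|\delta_y|^2),
\]
with the hypothesis $|z|\le\rho<1$ guaranteeing $|u^*-z|\ge 1-\rho$. Taking the quotient, the $\real\delta_y$ factors cancel to give
\[
\frac{g_D(x,y)}{g_D(0,y)} = \frac{1-|\psi_D(x)|^2}{|\psi_D(x)-\psi_D(u)|^2}\bigl[\,1+O(R^{-1/2})\,\bigr].
\]

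Combining these two pointwise estimates and noting that passing from a uniform pointwise ratio to a ratio of sums of a bounded number of nonnegative terms costs only an absolute constant in the relative error yields \eqref{MGthm.eq} with overall error $\max\{R^{-(1/4-\epsilon)}, R^{-1/2}\} = R^{-(1/4-\epsilon)}$. The principal obstacle is the Green's function comparison step: near the boundary both $G_D(x,y)$ and $g_D(x,y)$ are of order $R^{-1/2}$ by the Beurling-type estimates of Lemmas~\ref{beurling2}--\ref{beurling3}, so an \emph{additive} discrete-continuous bound is useless, and what is required is a \emph{normalized}, multiplicative estimate valid uniformly up to the grid boundary. This is precisely the content of Theorem~\ref{main_green_thm}---an improvement of Theorem~1.2 of~\cite{KozL1}---and it is this boundary-uniform estimate, rather than any classical Green's function asymptotic, that ultimately sets the rate $R^{-(1/4-\epsilon)}$ in \eqref{MGthm.eq}.
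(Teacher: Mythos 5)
Your high-level plan---last-exit decomposition, reduction to a ratio of discrete Green's functions, comparison with the continuous Green's function, first-order expansion near the boundary to produce the Poisson kernel---is the right skeleton, and your Taylor expansion of $g_\D$ is essentially the computation \eqref{oct30.1} that appears in Appendix~\ref{appendixA}. But there are two genuine gaps.

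The central one is the Green's function comparison step. You correctly diagnose that an additive discrete-to-continuous bound is useless for $y$ one lattice step from $\partial D$, and you then assert that Theorem~\ref{main_green_thm} supplies a multiplicative bound valid up to the boundary. It does not: Theorem~\ref{main_green_thm} is exactly the additive bound $|G_D(x,y) - (2/\pi)g_D(x,y) - k_{y-x}| \le cR^{-(1/2-\epsilon)}$, and for $y \in A_u$ (where $G_D(x,y)\asymp g_D(x,y) \asymp R^{-1/2}$) this gives a relative error of order $R^{\epsilon}$, which is vacuous. The quantity you need, $G_D(\cdot,y) = (2/\pi)g_D(\cdot,y)\,[1+O(R^{-(1/4-\epsilon)})]$ uniformly for $y$ adjacent to $\partial D$, is not available and is not what Proposition~\ref{KLtheorem} gives either---that proposition gives the ratio $G_D(x,y)/G_D(y)$ already in Poisson-kernel form. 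What makes Proposition~\ref{KLtheorem} work (see the appendix) is a mesoscopic-shell argument: one defines $D^*=\{g_D \ge c_0 R^{-p}\}$, compares discrete and continuous Green's functions on $\partial D^*$ where both are of size $\asymp R^{-p}$ so the additive bound $R^{-(1/2-\epsilon)}$ translates to a relative error $R^{p-1/2+\epsilon}\log R$, propagates this to the true boundary via the strong Markov property (using Corollary~\ref{apr8.cor1} to control the angular drift), and then optimizes $p=1/4$ by balancing the error $R^{p-1/2}\log R$ from the comparison against the error $R^{-p}\log R$ from the shell width. This intermediate normalization step is the key idea, and it is missing from your proposal.

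The secondary gap is the reduction to UBS domains. Proposition~\ref{KLtheorem} is stated and proved only for UBS domains, and for a reason: a general grid domain can have boundary arcs with positive continuous harmonic measure but zero discrete harmonic measure (e.g., slits reachable only from the ``wrong'' side), which breaks the comparison between $G_D$ and the conformal data of $D$. The paper replaces $D$ by the associated UBS domain $D_0$, shows $H_D(x,u)/H_D(0,u) = H_{D_0}(x,u)/H_{D_0}(0,u)$ since $V_0$ is exactly the set of accessible vertices, and then uses Lemmas~\ref{kozdron-lawler-2-lemma} and~\ref{kozdron-lawler-2-lemma2} (whose proofs depend on the crosscut bound $\diam\psi(\mathcal{C}_j) = O(R^{-1/2})$ and Lemma~\ref{ring.lemma}) to transfer the Poisson-kernel expression from $D_0$ back to $D$ at the cost of an $O(R^{-1/4})$ error. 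Without this step your application of the appendix estimate to a general $D\in\grid$ is not justified.
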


The proof is  given in Section~\ref{MGthmsectproof}. It
relies on both the estimate of the discrete Green's function outlined in Section~\ref{MGthmsectEJP} and the domain reduction argument given in Section~\ref{MGthmsectDR}. The purpose of the domain reduction argument is that it reduces the proof of Theorem~\ref{MGthm} to showing that~\eqref{MGthm.eq} holds for a special class of grid domains.

\begin{definition*}
We call a domain $D \subset \C$ a \emph{union of big squares (or UBS) domain} if $D$ can be written as
\[
D = \bigcup_{z\in V}\bs{z},
\]
where 
\[
\bs{z} = \{w\in\C:|\real(w)-\real(z)|<1, \, |\imag(w)-\imag(z)|<1\}
\] 
for some connected subset $V  \subset \Z^2$. 
\end{definition*}
Note that $\bs{z}$ is the \emph{open} square with side length $2$ around the vertex $z$. Furthermore, observe that a UBS domain is a grid domain, although the converse is not true. It will be tacitly understood that UBS domains are simply connected unless otherwise stated.

The main reason for using UBS domains is that while grid domains may have parts of the boundary with positive continuous harmonic measure but zero discrete harmonic measure, this does not happen with UBS domains if the discrete harmonic measure is interpreted appropriately. At the same time we can associate a UBS domain to each grid domain in $\grid$ without them differing too much from the conformal mapping point of view.

\subsection{Estimates of the discrete Green's function}\label{MGthmsectEJP}

The first step in the proof of Theorem~\ref{MGthm} requires the following estimate which is a version for UBS domains of Proposition~3.10 of~\cite{KozL1}.

\begin{proposition}\label{KLtheorem}
Let $0<\epsilon <1/4$ and let $0<\rho<1$ be fixed. There exists $R_0 < \infty$ such that the following holds. 
Suppose that $D$ is a UBS domain with $\inrad(D)=R$ and that $R> R_0$. Let  $V = V(D) = D \cap \Z^2$.
If $x$, $y \in V$ with $|\psi_D(x)| \le \rho$ and $|\psi_D(y)| \ge 1 - R^{-(1/4-\epsilon)}$,
then
\begin{equation}\label{KLestimate}
\frac{G_D(x,y)}{G_D(y)}= \frac{1-|\psi_D(x)|^2}
{|\psi_D(x)-e^{i\theta_D(y)}|^2} \cdot [\; 1 + O(R^{-(1/4-\epsilon)})  \;]
\end{equation}
where $G_D$ denotes the Green's function for simple random walk on $V$.
\end{proposition}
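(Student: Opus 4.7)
My plan is to compare the discrete Green's function $G_D$ with the continuous Green's function $g_D$, which has an explicit closed form under $\psi_D$, and then extract \eqref{KLestimate} from that comparison together with a direct complex-analytic computation of the continuous ratio in the limit $|\psi_D(y)|\to 1$.

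First, on the continuous side, conformal invariance combined with \eqref{defngreen} yields
\[
g_D(z,w) \;=\; -\log\left|\frac{\psi_D(z)-\psi_D(w)}{1-\overline{\psi_D(w)}\psi_D(z)}\right|.
\]
Setting $a=\psi_D(x)$ and $\psi_D(y)=r\zeta$ with $\zeta=e^{i\theta_D(y)}$ and $r\ge 1-R^{-(1/4-\epsilon)}$, the algebraic identity $|1-r\overline\zeta a|^2-|a-r\zeta|^2=(1-|a|^2)(1-r^2)$ combined with the expansion $\log(1+u)=u+O(u^2)$ applied to both $g_D(x,y)$ and $g_D(0,y)=-\log r$ collapses to
\[
\frac{g_D(x,y)}{g_D(0,y)} \;=\; \frac{1-|\psi_D(x)|^2}{|\psi_D(x)-e^{i\theta_D(y)}|^2}\cdot\bigl[1+O(R^{-(1/4-\epsilon)})\bigr].
\]

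Second, on the discrete side, the representation \eqref{green1.1} and the asymptotic \eqref{1.2} of the potential kernel give, for $z\in\{0,x\}$,
\[
G_D(z,y) \;=\; \tfrac{2}{\pi}\bigl(\E^z[\log|S_{\tau_D}-y|]-\log|y-z|\bigr) + \text{(lower-order remainders)}.
\]
Since $\E^z[\log|B_{T_D}-y|]-\log|y-z|=g_D(z,y)$, the problem reduces to showing that the discrete and Brownian boundary expectations of $\log|\cdot-y|$ are close, which is exactly the content of the new uniform Green's function estimate of Theorem~\ref{main_green_thm}; this yields $G_D(z,y)=\tfrac{2}{\pi}g_D(z,y)+\mathrm{err}(R)$ with an additive error sharper than the one available from the classical literature. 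The UBS hypothesis on $D$ is used precisely to rule out pathological parts of the boundary where continuous and discrete harmonic measures fail to coincide, and the restriction $|\psi_D(x)|\le\rho$ together with the Koebe estimate places $x$ at Euclidean distance of order $R$ from $\partial D$, so the $O(|y-z|^{-2})$ terms coming from \eqref{1.2} are $O(R^{-2})$ and negligible.

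Plugging $G_D=\tfrac{2}{\pi}g_D+\mathrm{err}$ into the ratio $G_D(x,y)/G_D(0,y)$ and combining with the continuous computation above then produces \eqref{KLestimate}. The main obstacle will be that \eqref{KLestimate} requires a \emph{multiplicative} error of size $R^{-(1/4-\epsilon)}$, while the natural $G_D$-to-$g_D$ approximation is additive, and the denominator $g_D(0,y)\asymp 1-r$ is itself already of that same order; consequently the additive error has to be a strict improvement beyond the scale of $g_D(0,y)$, and this is precisely the sharpening supplied by Theorem~\ref{main_green_thm}. A secondary technical difficulty is that naive remainders of the form $\E^z[|S_{\tau_D}-y|^{-2}]$ blow up as $y$ approaches $\partial D$; this can be handled by splitting the expectation according to whether $S_{\tau_D}$ is unusually close to $y$ and bounding the tail via a Beurling-type estimate from Section~\ref{Sect-notation}.
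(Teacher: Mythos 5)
Your first two steps are fine and match the paper's ingredients: the continuous ratio computation via conformal invariance is correct, and Theorem~\ref{main_green_thm} is indeed the key input. However, the substitution in your third step has a genuine gap. You write that ``the denominator $g_D(0,y)\asymp 1-r$ is itself already of that same order'' $R^{-(1/4-\epsilon)}$, but this is not true: the hypothesis $|\psi_D(y)|\geq 1-R^{-(1/4-\epsilon)}$ gives only an \emph{upper} bound $1-|\psi_D(y)|\leq R^{-(1/4-\epsilon)}$, with no lower bound. For a lattice point $y$ deep in a thin fjord, $1-|\psi_D(y)|$ and hence $g_D(0,y)$ can be exponentially small in $R$, far below the uniform additive error $O(R^{-(1/2-\epsilon)})$ coming from Theorem~\ref{main_green_thm}. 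When $g_D(0,y)\ll R^{-(1/2-\epsilon)}$, dividing the additive estimates $G_D(z,y)=\tfrac{2}{\pi}g_D(z,y)+O(R^{-(1/2-\epsilon)})$ for $z\in\{0,x\}$ gives no multiplicative control whatsoever, and \eqref{KLestimate} does not follow.

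The paper circumvents exactly this issue by never applying the Green's function comparison directly at $y$. Instead it introduces the interior set $D^*=\{z\in V: g_D(z)\geq c_0R^{-p}\}$ (with $p$ to be optimized) and uses the strong Markov property: stop the walk from $y$ at the time $\eta$ it first enters $D^*\cup V^c$, so that $G_D(x,y)=\sum_{z}\Prob^y(S_\eta=z)\,G_D(x,z)$, and similarly for $G_D(y)$. The hitting points $z$ lie on the inner boundary of $D^*$, where $g_D(z)=c_0R^{-p}+O(R^{-(1-2p)})$, so Theorem~\ref{main_green_thm} gives a controlled \emph{relative} error $O(R^{p-1/2}\log R)$ at those points. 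Corollary~\ref{apr8.cor1} then ensures that, conditioned on hitting $D^*$, the walk does so without $\theta_D$ changing by more than $O(R^{-p}\log R)$, which lets one replace $e^{i\theta_D(z)}$ by $e^{i\theta_D(y)}$ with a second relative error $O(R^{-p}\log R)$. Balancing $R^{p-1/2}$ against $R^{-p}$ forces $p=1/4$. Without this Markov-property decomposition, the proposition cannot be obtained from Theorem~\ref{main_green_thm} alone.
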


In~\cite{KozL1}, the results are proved for simply connected domains with a Jordan boundary and allow both points to be close to the boundary as long as they are not too close to each other. In the present paper, we are concerned with grid domains which, although still simply connected, need not have a Jordan boundary.  Furthermore, we are not concerned with any two arbitrary points, but rather with one point near the boundary and one point near the origin. Using this additional hypothesis and improving the methods of~\cite{KozL1} allows us to find a better exponent of $1/4$.

The derivation of Proposition~\ref{KLtheorem} in our particular setting essentially follows the same steps as in the original proof from~\cite{KozL1}. There is, however, the matter of adapting the original proof from a simply connected domain with Jordan boundary to a UBS domain.  This change of setting requires that certain technical estimates be established. For this reason we have included the proof of Proposition~\ref{KLtheorem} in this new setting in Appendix~\ref{prop42appendix}.

\subsection{A domain reduction}\label{MGthmsectDR}

Suppose that $D \in \grid$ is a grid domain and that $u \in \bd D \cap \Z^2$ is accessible by a simple random walk starting from $0$. Write $R=\inrad(D)$. Let $V = V(D)  = D \cap \Z^2$ denote those vertices contained in $D$ and let $V_0$ be the component of $V$ containing the origin; note that $V_0$ is simply
connected. Define $D_0 \subset D$ by setting
$$D_0 = \bigcup_{z\in V_0}\bs{z},$$
where $\bs{z} = \{w\in\C:|\real(w)-\real(z)|<1,\, |\imag(w)-\imag(z)|<1\}$ 
so that $D_0$ is a UBS domain. We will call $D_0$ the UBS domain \emph{associated} with $D$. In particular, notice that
\begin{itemize}
\item[{\rm (i)}]  $D_0 \subset D$ is a simply connected domain containing the origin,
\item[{\rm (ii)}] $u \in \partial D_0$, and
\item[{\rm (iii)}] for some $1 \le M \le \infty$, we can write 
\[
\partial D_0 \cap D= \bigcup_{j=1}^M \mathcal{C}_j,
\]
where $\mathcal{C}_j$, $j=1,\ldots, M$, are crosscuts of $D$ with length at most $2$.
\end{itemize}

For ease of notation, throughout this section, we write  $\psi$ for
$\psi_D$ and $\psi_0$ for $\psi_{D_0}$. Recall that we can write
$\psi(z) = \exp\{-g(z) + i \theta(z)\}$
 and
$\psi_0(z) = \exp\{-g_{0}(z) + i \theta_{0}(z)\}$
where $g$ and $g_{0}$ are the Green's functions for $D$ and $D_0$, respectively.
 
 By Lemma~\ref{DRbeurling}, since $\diam(\mathcal{C}_j) \le
2$, there exists a universal constant $c < \iy$ such that
\[
\diam(\psi(\mathcal{C}_j)) \le c R^{-1/2}.
\]
If $\Omega=\psi(D_0) \subset \D$ it follows that
\[
\partial \Omega \cap \D \subset \mathcal{A}(1-cR^{-1/2},1),
\]
where $\mathcal{A}(a,b) = \{z: a < |z| < b\}$.
Finally, we write 
 \begin{equation}\label{defnvp}
\psi_0 = \vp \circ \psi, \quad z \in D_0,
\end{equation}
where $\vp: \Omega \to \D$ is the conformal map of $\Omega=\psi(D_0)$ onto $\D$ satisfying $\vp(0)=0$, $\vp'(0)>0$. The following estimate quantifies the fact that $\vp$ is almost the identity away from the boundary.

\begin{lemma}\label{ring.lemma} Let $0 < \ee < 1/2$ be fixed.
Suppose $\Omega \subset \D$ with $\partial \Omega \cap \D \subset \mathcal{A}(1-\ee,1)$, and let
$\vp: \Omega \to \D$ be the conformal map of $\Omega$ onto $\D$  with $\vp(0)=0$,  $\vp'(0)>0$. If $|z| \le 1-2\ee$, then
\[
|\vp(z)-z| \le c_0 \ee \, \log (1/\ee),
\] 
where $c_0$ is a uniform constant.
\end{lemma}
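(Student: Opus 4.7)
The plan is to reduce the estimate to the classical logarithmic $L^1$ bound for the conjugate Poisson kernel, applied on the subdisk $\ball(0,1-\ee)$. This subdisk lies inside $\Omega$: since $\Omega$ is a connected domain containing $0$, any path from $0$ to a point outside $\Omega$ with modulus less than $1-\ee$ would have to cross $\partial\Omega\cap\D$, contradicting $\partial\Omega\cap\D\subset\mathcal{A}(1-\ee,1)$. On this subdisk, the strategy is to estimate $\log(\vp(z)/z)$.

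First I would squeeze $\vp$ between the identity and a slight dilation using the Schwarz lemma in both directions. Applied to $\vp^{-1}:\D\to\Omega\subset\D$ it gives $|z|\le|\vp(z)|$ on $\Omega$, and applied to $w\mapsto\vp((1-\ee)w):\D\to\D$ it gives $|\vp(z)|\le|z|/(1-\ee)$ on $\ball(0,1-\ee)$; in particular $1\le\vp'(0)\le(1-\ee)^{-1}$. Introduce $H(z)=\log(\vp(z)/z)$, holomorphic on $\Omega$ with $H(0)=\log\vp'(0)\in[0,-\log(1-\ee)]$ real (so $\imag H(0)=0$). Its real part $u=\log|\vp|-\log|\cdot|$ is harmonic on $\Omega$, and the modulus bounds force $0\le u\le-\log(1-\ee)\le 2\ee$ throughout $\ball(0,1-\ee)$.

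The key step is the Schwarz/Herglotz integral representation on $\ball(0,1-\ee)$: after centering $u$ by a suitable real constant $c$ so that $\|u-c\|_\infty\le \ee$, using $\imag H(0)=0$ one has
\[
|\imag H(z)|\le \|u-c\|_\infty\cdot\frac{1}{2\pi}\int_0^{2\pi}\left|\imag\frac{(1-\ee)e^{i\theta}+z}{(1-\ee)e^{i\theta}-z}\right|d\theta=\frac{2\ee}{\pi}\log\frac{(1-\ee)+|z|}{(1-\ee)-|z|},
\]
where the exact $L^1$ norm of the conjugate Poisson kernel on a circle is a standard computation. For $|z|\le 1-2\ee$ the logarithm is $O(\log(1/\ee))$, so $|\imag H(z)|=O(\ee\log(1/\ee))$, and together with $|\real H|\le 2\ee$ we conclude $|H(z)|=O(\ee\log(1/\ee))$. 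The bound on $\vp(z)-z=z(e^{H(z)}-1)$ then follows from $|e^{H(z)}-1|\le 2|H(z)|$ once $\ee$ is small enough, giving $|\vp(z)-z|\le 2|z||H(z)|=O(\ee\log(1/\ee))$.

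The main obstacle is getting the logarithmic factor $\log(1/\ee)$ rather than the useless $1/\ee$ that a direct Borel--Carath\'eodory or Schwarz-formula bound would produce on the concentric disks $\ball(0,1-\ee)\supset\ball(0,1-2\ee)$, whose radii differ only by $\ee$. The crucial input is the sharp logarithmic $L^1$ estimate on the conjugate Poisson kernel (equivalently, the Privalov/Zygmund bound on the harmonic conjugate), which is exactly what survives in the borderline regime $R-|z|=\ee\ll R$.
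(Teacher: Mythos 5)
Your proof is correct and recovers, from first principles, precisely the estimate that the paper's own one-line proof imports by citation: Section~3.5 of Lawler's book gives $|\log(\vp(z)/z)| \le c\,\ee\,[1 - \log(1-|z|)]$, and the published proof just specializes this to $|z|\le 1-2\ee$ to get $O(\ee\log(1/\ee))$. Your derivation is exactly the argument underlying that citation: the two applications of the Schwarz lemma (to $\vp^{-1}$ and to $w\mapsto\vp((1-\ee)w)$) confine $\real H=\log|\vp(z)/z|$ to an interval of length $O(\ee)$ on $\ball(0,1-\ee)$, and then the Schwarz--Herglotz representation combined with the exact computation
\[
\frac{1}{2\pi}\int_0^{2\pi}\left|\imag\frac{Re^{i\theta}+z}{Re^{i\theta}-z}\right|d\theta=\frac{2}{\pi}\log\frac{R+|z|}{R-|z|}
\]
gives $|\imag H|=O(\ee\log(1/\ee))$ at the borderline radius $|z|=1-2\ee$; as you rightly emphasize, the logarithmic $L^1$ norm of the conjugate Poisson kernel is exactly what beats the $1/\ee$ that a Borel--Carath\'eodory or crude Schwarz-formula bound would give on two circles only $\ee$ apart. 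Two small points worth tidying: since $\ball(0,1-\ee)$ is open and its bounding circle may touch $\partial\Omega$, you should run the representation on a slightly smaller circle (say radius $1-\tfrac32\ee$, which still has $R-|z|\gtrsim\ee$ when $|z|\le 1-2\ee$) or pass to the limit from radii $R'<1-\ee$; and the middle display is really an inequality (the bound $\|u-c\|_\infty\le\ee$ is being used there) followed by the exact kernel computation. Neither affects the conclusion, and the final deduction $|\vp(z)-z|\le 2|z||H(z)|$ for $\ee$ small enough is sound because $|H|$ is uniformly bounded over $0<\ee<1/2$.
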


\begin{proof}
In Section~3.5 of~\cite{SLEbook} it is shown that there is a universal constant $c_1$ such that
\[
|\log(\vp(z)/z)| \le c_1 \ee [1-\log(1-|z|)],
\]
where the branch of the logarithm is chosen so that
$\log(\vp(0)/0)=\log \vp'(0) \ge 0$. It follows that if $|z|\le 1-2\ee$, then using the Schwarz lemma and a Taylor expansion, there is a constant $c_2$ depending only on $c_1$ such that 
\[
0 \le |\vp(z)|-|z| \le c_2 \ee \, \log (1/\ee),
\]
and 
\begin{equation} \label{nov10.1}
|\arg (\vp(z)/z)| \le c_2 \ee [ 1+\log (1/\ee)],
\end{equation}
completing the proof.
\end{proof}

\begin{lemma} \label{kozdron-lawler-2-lemma}
Let $0<\epsilon<1/4$ be fixed. There exists $R_0$ such that if $R > R_0$ and $x$, $y \in V_0$ with 
\begin{equation} \label{april14.1}
g(x) \ge R^{-(1/4-\epsilon)}
\end{equation}
and $g(y) < R^{-(1/4-\epsilon)}$, then
\begin{equation}\label{aug3.eq2}
\psi_0(x) = \psi(x) + O(R^{-1/2}\log R)
\end{equation}
and
\begin{equation}\label{aug3.eq3}
e^{i\theta_{0}(y)}=e^{i\theta(y)} + O(R^{-1/4}).
\end{equation}
\end{lemma}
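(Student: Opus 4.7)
The plan is to prove the two estimates separately, both building on the factorization $\psi_0 = \vp \circ \psi$ so that it suffices to quantify the deviation of $\vp$ from the identity at the relevant points of $\Omega$.

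For \eqref{aug3.eq2}, the hypothesis $g(x) \ge R^{-(1/4-\ee)}$ forces $|\psi(x)| = e^{-g(x)} \le 1 - \tfrac{1}{2}R^{-(1/4-\ee)}$ for $R$ large. Since $R^{-(1/4-\ee)}$ dominates $R^{-1/2}$, this places $\psi(x)$ in the regime $|z| \le 1 - 2cR^{-1/2}$ where Lemma~\ref{ring.lemma} applies with its parameter equal to $cR^{-1/2}$, matching the containment $\bd\Omega \cap \D \subset \mathcal{A}(1-cR^{-1/2},1)$ established above. Lemma~\ref{ring.lemma} then yields $|\vp(\psi(x)) - \psi(x)| = O(R^{-1/2}\log R)$, and \eqref{aug3.eq2} follows from $\psi_0 = \vp \circ \psi$.

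For \eqref{aug3.eq3}, write $w = \psi(y)$. The hypothesis $g(y) < R^{-(1/4-\ee)}$ forces $|w|$ to be close to $1$, and $|\vp(w)| \ge |w|$ by Schwarz applied to $\vp^{-1}:\D \to \Omega$, so $\min(|w|,|\vp(w)|) \ge 1/2$ and
\[
|e^{i\theta_0(y)} - e^{i\theta(y)}| = \left|\frac{\vp(w)}{|\vp(w)|} - \frac{w}{|w|}\right| \le 4\,|\vp(w) - w|.
\]
It therefore suffices to prove $|\vp(w) - w| = O(R^{-1/4})$. If $|w| \le 1 - 2cR^{-1/2}$, this follows immediately from Lemma~\ref{ring.lemma}. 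Otherwise, define $z_0 = (1-2cR^{-1/2})w/|w|$ on the same radius as $w$; Lemma~\ref{ring.lemma} gives $|\vp(z_0) - z_0| = O(R^{-1/2}\log R)$ and $|z_0 - w| \le 2cR^{-1/2}$. The remaining bound $|\vp(w) - \vp(z_0)| = O(R^{-1/4})$ I would obtain from the Beurling-type estimate of Lemma~\ref{DRbeurling}, applied to $\vp:\Omega\to\D$ (noting $\inrad(\Omega) \ge 1/2$ for large $R$) and a simple curve $\beta$ of diameter $O(R^{-1/2})$ in $\overline{\Omega}$ that passes through both $z_0$ and $w$ and has an endpoint on $\bd\Omega$: the square-root in Beurling converts the $R^{-1/2}$ diameter into the desired $O(R^{-1/4})$. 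The triangle inequality then yields $|\vp(w) - w| = O(R^{-1/4})$.

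The main technical obstacle I anticipate is the construction of the curve $\beta$. The natural candidate is the radial segment from $z_0$ through $w$, extended past $w$ to its first exit from $\overline{\Omega}$, but this can fail to stay in $\overline{\Omega}$ if the ray crosses a crosscut $\psi(\mathcal{C}_j)$ and then re-enters $\Omega$ on the original side. I expect the fix to be a detour along the offending crosscut: each $\psi(\mathcal{C}_j)$ has Euclidean diameter at most $cR^{-1/2}$, and a prior application of Lemma~\ref{DRbeurling} yields $\diam \vp(\psi(\mathcal{C}_j)) = O(R^{-1/4})$, so the detoured curve retains diameter $O(R^{-1/2})$ with $\vp$-image of diameter $O(R^{-1/4})$. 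Turning this into a rigorous construction, including the possibility of multiple consecutive crossings of different crosscuts, is the most delicate step of the argument.
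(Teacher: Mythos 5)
Your proposal is correct and follows essentially the same route as the paper's proof: Lemma~\ref{ring.lemma} applied at $\psi(x)$ for \eqref{aug3.eq2}, and for \eqref{aug3.eq3} in the hard case a short curve $\beta \subset \Omega$ of diameter $O(R^{-1/2})$, with one endpoint on $\partial\Omega$, linking $\psi(y)$ to the circle $\{|z|=1-2c_0R^{-1/2}\}$, followed by Lemma~\ref{DRbeurling} and Lemma~\ref{ring.lemma}. The only difference is that the paper simply asserts the existence of $\beta$, whereas you flag the possible obstruction that a radial segment from $\psi(y)$ inward could dip into a component of $\D\setminus\Omega$; your proposed fix is sound, since each such component lies on the small side of a crosscut $\psi(\mathcal{C}_j)$ and hence has diameter $O(R^{-1/2})$, so detouring just inside $\Omega$ around it changes the diameter of $\beta$ only by $O(R^{-1/2})$, and the rest of your argument (Schwarz for $|\vp(w)|\ge|w|$, the bound $|e^{i\theta_0}-e^{i\theta}|\le 4|\vp(w)-w|$, and the triangle inequality) goes through as stated.
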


\begin{proof}
We may assume that $x \neq 0$. Let $c_0>0$ be a constant such that $\partial \psi(D_0) \subset \mathcal{A}(1-c_0R^{-1/2},1)$ and recall that $\psi_0=\vp \circ \psi$ for $z \in D_0$ as in~\eqref{defnvp}.
Note that~\eqref{april14.1} implies that $|\psi(x)| \le 1 - cR^{-(1/4-\epsilon)} \le 1-2c_0R^{-1/2}$, for $R$ large enough, so
Lemma~\ref{ring.lemma} applied to the point $z=\psi(x)$ implies that there exists a uniform constant $c_1$ such that
\begin{equation*}%\label{april14.3}
|\vp(\psi(x))-\psi(x)|  = |\psi_0(x) - \psi(x)| \le c_1 R^{-1/2} \, \log R ,
\end{equation*}
yielding~\eqref{aug3.eq2}. 

If $y$ is as in the statement of the lemma and $|\psi(y)| \le 1-2c_0R^{-1/2}$ then~\eqref{aug3.eq3} follows from~\eqref{nov10.1}. Hence we may assume that $|\psi(y)| > 1-2c_0R^{-1/2}$.
Since the boundary of $\psi(D_0)$ contained in $\D$ is a union of images of crosscuts with diameter bounded by $c_0 R^{-1/2}$ there is a curve $\beta$ in $\psi(D_0)$ that connects $\psi(y)$ to the circle $\{|z| = 1-2c_0R^{-1/2}\}$
and satisfies $\diam  \beta \le c_2 R^{-1/2}$ for some absolute constant $c_2 < \infty$. By Lemma~\ref{DRbeurling}
we have $\diam \vp(\beta) \le c_3R^{-1/4}$ and using again Lemma~\ref{ring.lemma} we see that 
\begin{equation*}%\label{april14.4}
e^{i\theta_{0}(y)}=e^{i\theta(y)} + O(R^{-1/4})
\end{equation*}
yielding~\eqref{aug3.eq3}, 
and the proof is complete.
\end{proof}

The final result for this section uses a particular continuity estimate for the Poisson kernel which we now state as a separate lemma.

\begin{lemma}
 If  $z \in \Disk$, $w \in \bd \Disk$,  and
\begin{equation}\label{PKformDR}
\PK{z}{w}{\Disk} = \frac{1-|z|^2}{|z-w|^2}
\end{equation}
so that $\PKa/(2\pi)$ is the Poisson kernel for the unit disk, then
 \begin{align}\label{PKrate.eq}
 |\PK{z'}{w'}{\Disk} &- \PK{z}{w}{\Disk} |  \nonumber\\
&\le |z'-z| \biggl[ \frac{  4|z-w| + 4|z'-w|}{|z-w|^2|z'-w|^2}\nonumber\\
&\qquad\qquad\quad\quad +  \frac{  (|z-w| + |z'-w|+2)(|z-w|^2 +|z'-w|^2)}{|z-w|^2|z'-w|^2}  \biggr]\nonumber\\
&\qquad+ |w-w'| \left[\frac{ |z'-w'| + 3|z'-w|}{|z'-w|^2|z'-w'|^2}\right].
\end{align}
\end{lemma}

\begin{proof} 
In order to derive this estimate for the Poisson kernel, we will write
$$|\PK{z'}{w'}{\Disk}  - \PK{z}{w}{\Disk}  |
\le 
|\PK{z'}{w}{\Disk}  - \PK{z}{w}{\Disk}  | +
|\PK{z'}{w'}{\Disk}  - \PK{z'}{w}{\Disk}  |$$
and estimate each piece separately. For the first piece, we begin by noting that  
\begin{equation}\label{dec28eq2}
|z'-w|^2 - |z-w|^2 \le |z'-z|^2 +2|z'-z||z-w|.
\end{equation}
Switching $z$ and $z'$ in~\eqref{dec28eq2} gives
\begin{equation}\label{dec28eq2b}
|z-w|^2 - |z'-w|^2 \le |z'-z|^2 +2|z'-z||z'-w|.
\end{equation}
Furthermore, 
\begin{align}\label{dec28eq4}
|z'|^2&|z-w|^2 - |z|^2|z'-w|^2 \\ \notag
&\le |z-z'|^2|z-w|^2 +2|z||z-z'||z-w|^2 +|z|^2(|z-w|^2 - |z'-w|^2)\\ \notag
&\le |z-z'|^2|z-w|^2 +2|z-z'||z-w|^2 + |z'-z|^2 +2|z'-z||z'-w|
\end{align}
using~(\ref{dec28eq2b}) the fact that  $|z| \le 1$. It now follows that
\begin{align*}
&\PK{z}{w}{\Disk}  - \PK{z'}{w}{\Disk}
=\frac{|z'-w|^2 - |z-w|^2   +|z'|^2|z-w|^2 -|z|^2|z'-w|^2 }{|z-w|^2|z'-w|^2}\\
&\le |z'-z| \left[\frac{  2|z'-z| +2|z-w|+ |z-z'||z-w|^2 +2|z-w|^2  +2|z'-w| }{|z-w|^2|z'-w|^2}\right]
\end{align*}
 using~(\ref{dec28eq2}) and~(\ref{dec28eq4}).  If we now use the fact that $|z-z'| \le |z-w| + |z'-w|$, then
\begin{align}\label{newfeb1eq1}
&\PK{z}{w}{\Disk}  - \PK{z'}{w}{\Disk} \notag \\
&\le |z'-z| \left[\frac{  4|z-w| + 4|z'-w|+ (|z-w| + |z'-w|+2)|z-w|^2}{|z-w|^2|z'-w|^2}\right].
\end{align}
Switching $z$ and $z'$ in~\eqref{newfeb1eq1} implies that 
\begin{align*}
&|\PK{z'}{w}{\Disk}  - \PK{z}{w}{\Disk} |\\
& \le\! |z'-z| \!\!\left[\frac{  4|z-w| + 4|z'-w|+  (|z-w| + |z'-w|+2)(|z-w|^2 +|z'-w|^2)}{|z-w|^2|z'-w|^2} \right]\!\!.
\end{align*}
As for the second piece, note that 
\begin{align*}
| \PK{z'}{w'}{\Disk}-\PK{z'}{w}{\Disk}  |
&\le \left|\frac{|z'-w'|^2 - |z'-w|^2}{|z'-w|^2|z'-w'|^2} \right|\\
&\le \left|\frac{(|z'-w| + |w-w'|)^2 - |z'-w|^2}{|z'-w|^2|z'-w'|^2} \right|\\
&=|w-w'| \left[\frac{|w-w'| +2|z'-w|}{|z'-w|^2|z'-w'|^2}\right].
\end{align*}
Here we are using the fact that
\begin{equation}
 ||z'-w'| ^2 - |z'-w|^2 | \le  | (|z'-w| +|w-w'|) ^2 - |z'-w|^2 |
 \end{equation}
which follows by considering separately the two possible cases $|z'-w| \le  |z'-w'|$ and $|z'-w'| \le  |z'-w|$.
If we now use the fact that $|w-w'| \le |z'-w| + |z'-w'|$, then
$$ |\PK{z}{w}{\Disk}  - \PK{z}{w'}{\Disk} |
\le |w-w'| \left[\frac{|z'-w'| + 3|z'-w|}{|z'-w|^2|z'-w'|^2}\right].$$
Thus, combining both our estimates gives the required result.
\end{proof}

\begin{lemma} \label{kozdron-lawler-2-lemma2}
Let $0<\epsilon <1/4$ and let $0<\rho<1$ be fixed. There exists $R_0 < \infty$ such that if $R > R_0$ and if $x$, $y \in V_0$ with $|\psi(x)| \le \rho$ and $|\psi(y)| \ge 1 - R^{-(1/4-\epsilon)}$,
then
\begin{equation}\label{aug3.eq4}
\frac{1-|\psi_0(x)|^2}{|\psi_0(x)-e^{i\theta_0(y)}|^2}
= \frac{1-|\psi(x)|^2}{|\psi(x)-e^{i\theta(y)}|^2} + O(R^{-1/4}).
\end{equation}
\end{lemma}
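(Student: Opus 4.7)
The plan is to combine Lemma~\ref{kozdron-lawler-2-lemma}, which compares $\psi_0(x)$ with $\psi(x)$ and $e^{i\theta_0(y)}$ with $e^{i\theta(y)}$, with the Poisson-kernel continuity estimate \eqref{PKrate.eq}. Setting $z=\psi(x)$, $z'=\psi_0(x)$, $w=e^{i\theta(y)}$, $w'=e^{i\theta_0(y)}$, the left-hand side of \eqref{aug3.eq4} is exactly $|\PK{z'}{w'}{\D}-\PK{z}{w}{\D}|$.

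First, I would verify the hypotheses of Lemma~\ref{kozdron-lawler-2-lemma}. Since $|\psi(x)|\le\rho$, we have $g(x)=-\log|\psi(x)|\ge-\log\rho$, which dominates $R^{-(1/4-\epsilon)}$ for $R$ large. Since $|\psi(y)|\ge 1-R^{-(1/4-\epsilon)}$, the elementary bound $-\log(1-u)\le 2u$ for $0<u<1/2$ gives $g(y)\le 2R^{-(1/4-\epsilon)}$. This is slightly larger than $R^{-(1/4-\epsilon)}$, but by fixing any $\epsilon'\in(\epsilon,1/4)$ and using $2R^{-(1/4-\epsilon)}<R^{-(1/4-\epsilon')}$ for $R$ large, I may apply Lemma~\ref{kozdron-lawler-2-lemma} with $\epsilon'$ in place of $\epsilon$. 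The conclusions \eqref{aug3.eq2} and \eqref{aug3.eq3} then yield $|z'-z|=O(R^{-1/2}\log R)$ and $|w'-w|=O(R^{-1/4})$.

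Next, I would verify that the denominators in \eqref{PKrate.eq} are bounded below. From $|z|\le\rho<1$ and $|w|=|w'|=1$ we get $|z-w|,|z-w'|\ge 1-\rho$, and combining with $|z'-z|=O(R^{-1/2}\log R)$ gives $|z'-w|,|z'-w'|\ge(1-\rho)/2$ for $R$ sufficiently large. Hence in both brackets of \eqref{PKrate.eq} the denominators are bounded below by a positive constant depending only on $\rho$ and the numerators are bounded above by an absolute constant, so each bracket is $O(1)$. Plugging the above approximations into \eqref{PKrate.eq} then yields
\[
|\PK{z'}{w'}{\D}-\PK{z}{w}{\D}|=O(|z'-z|)+O(|w'-w|)=O(R^{-1/4}),
\]
which is \eqref{aug3.eq4}. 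The main—very mild—technicality is lining up the condition $g(y)<R^{-(1/4-\epsilon)}$ of Lemma~\ref{kozdron-lawler-2-lemma} with the hypothesis on $|\psi(y)|$, handled by slightly enlarging $\epsilon$; once that is done, the conclusion reduces to the boundedness of the Poisson kernel and its gradient uniformly away from the unit circle, which is exactly what \eqref{PKrate.eq} quantifies.
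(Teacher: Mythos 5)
Your proof is correct and takes essentially the same approach as the paper's: introduce $z=\psi(x)$, $z'=\psi_0(x)$, $w=e^{i\theta(y)}$, $w'=e^{i\theta_0(y)}$, invoke Lemma~\ref{kozdron-lawler-2-lemma} for the bounds $|z'-z|=O(R^{-1/2}\log R)$ and $|w'-w|=O(R^{-1/4})$, then feed these into the Poisson-kernel continuity estimate \eqref{PKrate.eq} using that the relevant distances are bounded above by $2$ and below by a constant depending only on $\rho$. The one place you are more careful than the paper is the translation between the hypothesis $|\psi(y)|\ge 1-R^{-(1/4-\epsilon)}$ and the condition $g(y)<R^{-(1/4-\epsilon)}$ needed for Lemma~\ref{kozdron-lawler-2-lemma}; your fix of slightly enlarging $\epsilon$ to some $\epsilon'\in(\epsilon,1/4)$ is valid since the conclusions of that lemma do not depend on the choice of $\epsilon$.
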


\begin{proof}
Let $z=\psi(x)$,  $z'= \psi_0(x)$, $w=e^{i\theta(y)}$, and $w'=e^{i\theta_{0}(y)}$, and note that by assumption there exists some constant $0<\rho'<1$ such that $|z-w| \ge \rho'$.
We also know from Lemma~\ref{kozdron-lawler-2-lemma} that there exist constants $c_1$ and $c_2$ such that
\begin{equation*}%\label{bounds.aug4}
|w - w'| \le c_1 R^{-1/4} \;\;\; \text{and} \;\;\; |z-z'| \le c_2 R^{-1/2} \log R.
\end{equation*}
Using the crude bounds that $|z-w| \le 2$, $|z'-w| \le 2$, $|z-w'| \le 2$, and $|z'-w'| \le 2$, it follows from~\eqref{PKrate.eq} that for $R$ large enough
\begin{equation}\label{PKbound.eq1}
 |\PK{z'}{w'}{\Disk}  - \PK{z}{w}{\Disk}  |  
\le c_3R^{-1/4}.
\end{equation}
Thus, we see from~\eqref{PKformDR} that~\eqref{PKbound.eq1} is equivalent to~\eqref{aug3.eq4} as required, and the proof is complete.
\end{proof}

\subsection{Proof of Theorem~\ref{MGthm}}\label{MGthmsectproof}

Let $D \in \grid$ be a grid domain, write $R=\inrad(D)$, and assume that $u \in V_\bd(D)$ is accessible by a simple random walk starting from $0$.  Let $V = V(D)  = D \cap \Z^2$, let $V_0$ be the component of $V$ containing the origin, and let $D_0$ be the UBS domain associated to $D$ as in Section~\ref{MGthmsectDR}.
Recall that $D_0 \subset D$ is a simply connected domain containing the origin and $u \in V_\partial(D_0)$. 

As in~\eqref{GFdecomp}, if $z \in V_0$ and $w \in V_{\bd}(D_0)$, then
\begin{equation}\label{step1}
H_{D_0}(z,w) = \frac{1}{4} \sum_{A_{w}} G_{D_0}(z,y)
\end{equation}
where $A_{w}$ is as in Section~\ref{conformal}.

Recall that we can write $\psi_{D_0}=\vp \circ \psi_D$. Hence if $|\psi_{D}(x)| \le \rho$, there is a $\rho_0<1$ only depending on $\rho$ such that $|\psi_{D_0}(x)| \le \rho_0$ whenever $R$ is sufficiently large. Since $D_0$ is a UBS domain, we can apply Proposition~\ref{KLtheorem} to $u$ and any point $x \in V_0$ with $|\psi_{D_0}(x)|\le \rho_0$. Hence, substituting~\eqref{KLestimate} into~\eqref{step1} gives
\begin{equation}\label{step2}
H_{D_0}(x,u) = \frac{1}{4} \sum_{A_{u}} G_{D_0}(y)\cdot \frac{1-|\psi_{D_0}(x)|^2}
{|\psi_{D_0}(x)-e^{i\theta_{D_0}(y)}|^2} \cdot [\; 1 + O(R^{-(1/4-\epsilon)})  \;].
\end{equation}
Since the summation in~\eqref{step2} is over $y$, we use the fact~\eqref{thetaestimate} that
$$\psi_{D_0}(u) = e^{i\theta_{D_0}(y)} + O(R^{-1/2})$$
to conclude
\begin{equation}\label{step3}
H_{D_0}(x,u) =  \frac{1-|\psi_{D_0}(x)|^2}
{|\psi_{D_0}(x)-\psi_{D_0}(u) |^2} \cdot [\; 1 + O(R^{-(1/4-\epsilon)})  \;]
\cdot \frac{1}{4} \sum_{A_{u}} G_{D_0}(y).
\end{equation}
Since 
$$H_{D_0}(0,u) =  \frac{1}{4} \sum_{A_{u}} G_{D_0}(y),$$
we see that~\eqref{step3} yields
\begin{equation*}%\label{step4}
\frac{H_{D_0}(x,u)}{H_{D_0}(0,u)} = \frac{1-|\psi_{D_0}(x)|^2}
{|\psi_{D_0}(x)-\psi_{D_0}(u) |^2} \cdot [\; 1 + O(R^{-(1/4-\epsilon)})  \;].
\end{equation*}
If we now observe that
\begin{equation}\label{step5}
\frac{H_{D}(x,u)}{H_{D}(0,u)}  = \frac{H_{D_0}(x,u)}{H_{D_0}(0,u)} 
\end{equation}
since $V_0$ consists of precisely those vertices accessible by a  simple random walk starting from the origin, and that Lemma~\ref{kozdron-lawler-2-lemma2} combined with~\eqref{thetaestimate} implies
\begin{equation}\label{step6}
\frac{H_{D_0}(x,u)}{H_{D_0}(0,u)} = \frac{1-|\psi_{D}(x)|^2}
{|\psi_{D}(x)-\psi_{D}(u) |^2} \cdot [\; 1 + O(R^{-(1/4-\epsilon)})  \;],
\end{equation}
then combining~\eqref{step5} and~\eqref{step6} gives~\eqref{MGthm.eq} and the proof of Theorem~\ref{MGthm} is complete.

%%%%%%%%
%%%%%%%%
\section{Moment estimates for increments of the driving function}\label{keysect}
%%%%%%%%
%%%%%%%%

The idea is now to use Theorem~\ref{MGthm} to transfer the fact that a suitable version of the discrete Poisson kernel~\eqref{MGthm.eq} is a martingale with respect to the growing loop-erased random walk path to information about the Loewner driving function for a mesoscopic scale piece of the path. This is the analogue of Proposition~3.4 of~\cite{LSW-aop}, but with a rate of decay. 
Suppose that $D \in \grid$ is a grid domain, write $R=\inrad(D)$, and let $\psi_D: D \to \D$ be the conformal map of $D$ onto $\D$ with $\psi_D(0)=0$, $\psi_D'(0)>0$. For ease of notation, we will write $\psi=\psi_D$ in what follows. 
For $w \in D$ and $u \in \bd D$, define
\begin{equation*}%\label{lambda}
\PK{w}{u}{D}=\text{Re}\left(\frac{\psi(u)+\psi(w)}{\psi(u)-\psi(w)}\right)= \frac{1-|\psi(w)|^2}{|\psi(w)-\psi(u)|^2}
\end{equation*}
as in~\eqref{PKformDR}.

Let $\gamma=(\gam_0,\ldots, \gam_l)$ denote the loop-erasure of the
time-reversal of simple random walk started at $0$, stopped when it
hits $\bd D$, and for $j\geq 0$, define the slit domains
\begin{equation*}%\label{D_j}
D_j=D\setminus\bigcup_{i=1}^j[\gam(i-1),\gam(i)].
\end{equation*}
As before, the conformal maps $\psi_j:D_j\to\Disk$ will be those satisfying $\psi_j(0)=0$ and $\psi_j'(0)>0$. 
We write $t_j$ for the capacity of the curve $\psi(\gam[0,j])$ from $0$ in $\Disk$. Denote by $W:[0,\infty)\to \bd\Disk$ the Loewner driving function for the curve $\tilde\gamma^R = \psi(\gamma)$ parameterized by capacity. That is, $W$ is the unique continuous function such that solving the radial Loewner equation~\eqref{LODE} with driving function $W$ gives the path $\tilde \gam^R$. %, then $U_j = W(t_j)$. 
Moreover, we denote by $(\vartheta(t),t\geq 0)$ the continuous, real-valued function such that $\vartheta(0)=0$ and 
\[
W(t)=W(0)e^{i\vartheta(t)},
\] and we define 
\[
\Delta_j=\vartheta(t_j).
\] 
Let $0<\epsilon<1/4$ be fixed. Set  $3\alpha =1/4-\epsilon$ and define  
\begin{equation}\label{m}
m=m(R)=\min\{j\geq 0: t_j\geq R^{-2\alpha} \text{ or } |\Delta_j| \geq R^{-\alpha}\}.
\end{equation}
The following is Lemma~2.1 of~\cite{LSW-aop}.

\begin{lemma}\label{lemma21}
Suppose $K_t$ is the hull obtained by solving~\eqref{LODE} with
$U_t$ as driving function. If $D(t)=\sqrt{t} + \sup_{0 \le s \le t}\{|U_s-U_0|\}$, 
then there exists a constant $c$ such that
\begin{equation*}
c^{-1}\min\{1, D(t)\} \le \diam(K_t) \le c D(t).
\end{equation*}
\end{lemma}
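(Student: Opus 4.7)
The plan is to prove both inequalities by direct analysis of the radial Loewner equation~\eqref{LODE}. The hull $K_t$ grows under two distinct drivers, both of which the quantity $D(t)$ encodes: the capacity $t$, which via $g_t'(0)=e^t$ controls the overall size of $K_t$ as seen from $0$; and the oscillation $\sup_{s\le t}|U_s - U_0|$ of the driving function, which controls how $K_t$ spreads along $\partial\D$. The result is the radial analogue of standard chordal Loewner estimates; cf.\ Chapter~3 of~\cite{SLEbook}.

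For the upper bound, I would fix $z \in K_t$ with blow-up time $\tau=\tau(z) \le t$, so that $\lim_{s \to \tau^-}g_{s}(z) = U_{\tau}$. Writing $g_s(z) = r_s e^{i\alpha_s}$ and $U_s = e^{i\beta_s}$, the equation~\eqref{LODE} yields coupled ODEs for $r_s$ and $\alpha_s$ whose common denominator $|U_s - g_s(z)|^2$ governs both the blow-up rate and the drift rate. A direct integration would give
\[
|g_s(z) - z| \le c\left(\sqrt{s} + \sup_{r \le s}|U_r - U_0|\right)
\]
for every $s < \tau$; passing to the limit $s \to \tau^-$ yields $|U_\tau - z| \le cD(t)$. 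A triangle inequality combined with $|U_\tau - U_0| \le D(t)$ then gives $|z - U_0| \le cD(t)$, whence $\diam(K_t) \le 2\sup_{z \in K_t}|z - U_0| \le cD(t)$.

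For the lower bound, I would first note that $\diam(K_t) \le 2$ always, so the case $D(t) \ge 1$ reduces to showing $\diam(K_t) \ge c^{-1}$; this follows from the Koebe one-quarter theorem (Lemma~\ref{Koebedistthm}) applied to $f_t = g_t^{-1}:\D \to \D \setminus K_t$, since $f_t'(0) = e^{-t}$ with $t$ bounded away from $0$ then forces $K_t$ to be nonempty with a definite diameter. In the regime $D(t) < 1$ I would establish the two contributions separately. First, $\diam(K_t) \ge c^{-1}\sqrt{t}$ via a capacity--diameter comparison: if $K_t \subset \ball(e^{i\theta}, r)$ for some $\theta$, then monotonicity of capacity together with the explicit capacity of a small slit forces $t = \ecap(K_t) \le cr^2$, so $r \ge c^{-1}\sqrt{t}$. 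Second, $\diam(K_t) \ge c^{-1}\sup_{s\le t}|U_s - U_0|$ because at each time $s$ the map $f_s$ sends $U_s$ to the tip of the growing hull and $f_s$ is close to the identity on short times, so the endpoints of $K_t$ on $\partial\D$ must trace the motion of $U$ up to a universal constant. Taking the maximum of the two contributions then gives $\diam(K_t) \ge c^{-1}D(t)/2$, up to adjusting constants.

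The main obstacle is the upper-bound ODE analysis, which must handle the singularity at $g_s(z) = U_s$ carefully in order to produce a bound symmetric in $\sqrt{t}$ and $\sup|U_s - U_0|$. Nevertheless, this follows the template of the half-plane (chordal) Loewner estimates, and together with the capacity and Koebe-type lower-bound arguments above no fundamentally new ideas beyond those already available in~\cite{SLEbook} are required.
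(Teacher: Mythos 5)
The paper does not prove this lemma; it imports it verbatim as Lemma~2.1 of~\cite{LSW-aop}, so there is no internal proof to compare against. Your outline follows the same general strategy as the source (control the flow $g_s(z)$ for the upper bound, combine a capacity estimate with an oscillation estimate for the lower bound), but two steps in your lower-bound argument as written have genuine gaps.

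First, the oscillation lower bound. You claim that ``the endpoints of $K_t$ on $\partial\Disk$ must trace the motion of $U$.'' For a slit hull --- the generic situation when $U$ generates a simple curve --- one has $K_t\cap\partial\Disk=\{U_0\}$, a single point, so that set traces nothing and this phrasing fails. The correct version of the argument works directly with the tips $\gamma(s)=f_s(U_s)$, which are interior points of $\Disk$ lying in $K_s\subset K_t$. One shows $|\gamma(s)-U_s|\le c\,\diam(K_s)$ --- and note this error is \emph{not} small in absolute terms, it is comparable to the hull diameter, so the phrase ``$f_s$ is close to the identity'' is misleading at the boundary point $U_s$: a radial slit of length $\ell$ already has $|f_s(U_0)-U_0|=\ell$. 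With that estimate in hand, $|U_s-U_0|\le|\gamma(s)-\gamma(0)|+|\gamma(s)-U_s|\le c\,\diam(K_t)$ since $\gamma(s),\,\gamma(0)=U_0\in K_t$, which is the desired bound.

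Second, the case split. You reduce the regime $D(t)\ge1$ to the Koebe one-quarter theorem by arguing that $f_t'(0)=e^{-t}$ with ``$t$ bounded away from $0$.'' But $D(t)\ge1$ does not imply $t$ is bounded below: one can have $\sqrt t$ tiny while $\sup_{s\le t}|U_s-U_0|\ge1/2$. In that sub-case the Koebe argument gives nothing and you must fall back on the oscillation bound (which is exactly where the $\min\{1,\cdot\}$ truncation enters, since $\diam(K_t)\le2$ always). As written, your proof covers $D(t)\ge1$ only when $\sqrt t$ is the dominant contribution.

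The upper bound sketch is the right idea, and you correctly flag the delicate point: the integral $\int_0^\tau|\partial_s g_s(z)|\,ds$ has an integrable singularity at $\tau$, but establishing the bound $|g_s(z)-z|\le c\,D(s)$ requires a comparison argument (e.g.\ differential inequalities for the radial and angular parts of $g_s(z)-U_s$), not literal direct integration of the right-hand side of~\eqref{LODE}. That is how the source handles it.
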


It follows from the last lemma and Lemma~\ref{DRbeurling} that
$t_m \le R^{-2\alpha} + O(R^{-1})$ and 
\begin{equation}\label{Deltaupperbound}
|\Delta_m| \le
R^{-\alpha} + O(R^{-1/2}). 
\end{equation}
Furthermore, if $w=\psi(v)$ where $|v| \le \inrad(D)/5$, then the Koebe one-quarter theorem implies $|w| \le 4/5$. 
By the Loewner equation, we have
\begin{equation*}%\label{vdisplacement}
|\psi_{m}(v)-w| \le c R^{-2\alpha} 
\end{equation*}
so that $|\psi_m(v)|\le 5/6$ for $R$ large enough. This means that the conditions of Theorem~\ref{MGthm} are satisfied by $v \in D_j$ for each $1 \le j \le m$.

Let $x \in D \cap \Z^2$, $w \in V_{\bd}(D)$, and recall the definition of the hitting probability 
$H_D(x,w)$ from Section~\ref{greenZ}. We will write $H_j(x,w)$ for $H_{D_j}(x,w)$. 
Fix $v\in V(D)$ with $|v|\leq R/5$. 
It can be shown that 
\begin{equation*}%\label{martingale}
M_j= \frac{H_j(v,\gam_j)}{H_j(0,\gam_j)}
\end{equation*}
is a martingale with respect to the filtration generated by
$\gamma[0,j]$, $j \ge 0$; see~\cite{LSW-aop}.
With the definition $\lambda_j =\lambda(v,\gam_j;D_j)$, we know from
Theorem~\ref{MGthm} that 
$$\left|\frac{H_j(v,\gam_j)}{H_j(0,\gam_j)}-\lambda_j\right|\leq cR^{-3\alpha}$$
for $j\leq m$ implying that
\begin{equation*}%\label{expecteddifference}
\E[\lambda_m-\lambda_0] = \E[M_m-M_0]+O(R^{-3\alpha}) = O(R^{-3\alpha}).
\end{equation*}
By a Taylor expansion using the Loewner equation we get 
\[
\lambda_m-\lambda_0=\real\left(\frac{ZU(U+Z)}{(U-Z)^3}\right)(2t_m-\Delta_m^2)+2\imag\left(\frac{ZU}{(U-Z)^2}\right)\Delta_m+O(R^{-3\alpha}),
\]
where $Z=\psi(v)$ and $U=W(0)$. (See~Remark~3.6 and the proof of Proposition~3.4 in~\cite{LSW-aop} for more details.) By taking the expectation and plugging in two different $v$, exactly as in~\cite{LSW-aop},
 recalling that $3\alpha= 1/4-\epsilon$, we arrive at the following. 

\begin{proposition}\label{keyestimate}
Let $0<\epsilon<1/4$ be fixed. There exist
constants $c>0$, $R_0\geq 1$ such that for all $R\geq R_0$ the following
holds. Let $D \in \grid$ be a grid domain with $\inrad(D)=R$ and let $\gamma$ be the loop erasure  of the time-reversal of simple random walk from $0$ in $D$ conditioned to
  exit $D$ through an edge corresponding to $u_0$, where  $u_0 \in V_{\bd}(D)$ is such that
this event has positive probability. If $t_j$, $\Delta_j$, and $m$ are defined as above, then
\begin{equation*}%\label{keyestimate1}
|\E[\Delta_m]|\leq cR^{-(1/4-\epsilon)}
\end{equation*}
and
\begin{equation*}%\label{keyestimate2}
|\E[\Delta_m^2]-2\E[t_m]|\leq cR^{-(1/4-\epsilon)}.
\end{equation*}
\end{proposition}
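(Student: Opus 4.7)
The plan is to combine the martingale property of $M_j = H_j(v,\gam_j)/H_j(0,\gam_j)$ with the approximation $|M_j-\PK{v}{\gam_j}{D_j}| \le cR^{-3\alpha}$ for $j\le m$ supplied by Theorem~\ref{MGthm}, and then to Taylor-expand $\PKa_j:=\PK{v}{\gam_j}{D_j}$ in the increments of the Loewner driving function via the radial Loewner equation. Optional stopping at $m$ (which is bounded by the finite length of $\gam$) gives $\E[M_m]=\E[M_0]$, and substituting $M_j = \PKa_j + O(R^{-3\alpha})$ at $j\in\{0,m\}$ yields $\E[\PKa_m-\PKa_0] = O(R^{-3\alpha})$.

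For the second step, set $Z = \psi(v)$ and $U=W(0)$, so that $\psi_j(v)=g_{t_j}(Z)$ and $\psi_j(\gam_j)=W(t_j)=Ue^{i\Delta_j}$ under the radial Loewner chain $(g_t)$ driven by $W$. Using~\eqref{LODE} to expand $g_{t_m}(Z)-Z$ and $e^{i\Delta_m}-1$ to second order, and keeping all terms of order $\le R^{-3\alpha}$ (recalling that $t_m = O(R^{-2\alpha})$ and $|\Delta_m| \le R^{-\alpha}+O(R^{-1/2})$ by~\eqref{Deltaupperbound}), produces the expansion
\begin{equation*}
\PKa_m - \PKa_0 = \real\!\left(\frac{ZU(U+Z)}{(U-Z)^3}\right)(2t_m - \Delta_m^2) + 2\,\imag\!\left(\frac{ZU}{(U-Z)^2}\right)\Delta_m + O(R^{-3\alpha})
\end{equation*}
already quoted in the text. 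I would choose $v$ with $|\psi(v)|\le 4/5$, so that by the Loewner bound noted above $|g_t(Z)|\le 5/6$ on $[0,t_m]$ and $|U-g_t(Z)|$ stays bounded below by a positive constant; this is exactly what is needed to control the cubic remainder.

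Taking expectations and combining with step one gives the single real linear relation
\begin{equation*}
A(U,Z)\bigl(2\E[t_m]-\E[\Delta_m^2]\bigr) + B(U,Z)\,\E[\Delta_m] = O(R^{-3\alpha}),
\end{equation*}
with $A(U,Z) = \real(ZU(U+Z)/(U-Z)^3)$ and $B(U,Z)=2\imag(ZU/(U-Z)^2)$. To solve for the two quantities of interest I would plug in two lattice test points $v_1, v_2 \in V(D) \cap \{|v|\le R/5\}$ whose images $Z_i = \psi(v_i)$ are in fixed positions relative to $U$ (for instance, two points on the circle $|Z|=1/2$ symmetric about the ray from $0$ to $U$), so that the $2 \times 2$ matrix with rows $(A(U,Z_i), B(U,Z_i))$ has determinant bounded uniformly below in $U \in \bd\D$. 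Inverting this system yields both $|\E[\Delta_m]| = O(R^{-3\alpha})$ and $|\E[\Delta_m^2]-2\E[t_m]| = O(R^{-3\alpha})$, which is the claimed $R^{-(1/4-\epsilon)}$ since $3\alpha = 1/4-\epsilon$. Existence of such lattice $v_i$ follows by prescribing $Z_i$ in $\D$, pulling back by $\psi^{-1}$, and snapping to the nearest lattice point; the resulting Koebe distortion is negligible on the scale $R^{-3\alpha}$ for $R$ large.

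The main obstacle is a rigorous justification that the Taylor remainder is $O(R^{-3\alpha})$ uniformly in all admissible driving paths $W$. This reduces to standard Gronwall/Koebe estimates for the radial Loewner ODE, showing that $|g_t(Z)-Z|$ and the higher derivatives $|\partial_t^k g_t(Z)|$ are controlled by $t$ and $\sup_{s\le t}|W(s)-U|$ so long as $|U-g_t(Z)|$ stays away from zero, a condition we arranged by the choice of $v$. The conditioning that $\gam$ exit through $u_0$ enters only through the distribution of $(t_m,\Delta_m)$ and the preservation of the martingale property of $M_j$ (both from~\cite{LSW-aop}), so it does not complicate the deterministic expansion; once the uniform ODE estimates are in place the rest is mechanical.
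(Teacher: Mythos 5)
Your proposal follows essentially the same approach as the paper: observe that $M_j$ is a martingale, apply Theorem~\ref{MGthm} to pass to the Poisson-kernel observable with an $O(R^{-3\alpha})$ error, Taylor-expand via the radial Loewner equation, and solve a two-dimensional linear system by substituting two test points $v$ whose images $\psi(v)$ sit in fixed positions relative to $U$. The only place where you add detail beyond what the paper explicitly writes (it defers the Taylor expansion and the two-point choice to \cite{LSW-aop}) is your concrete construction of $v_1,v_2$ with $|Z_i|=1/2$ symmetric about $U$ and the observation that $A(U,Z)$, $B(U,Z)$ then depend only on $Z/U$, so the $2\times2$ determinant is a nonzero constant independent of $U$; this is a valid and slightly more self-contained rendering of the same argument, and it is compatible with the hypotheses of Theorem~\ref{MGthm} since $|\psi(v_i)|\le 1/2$ is stronger than the $|\psi(v)|\le 4/5$ guaranteed by the paper's normalization $|v|\le R/5$.
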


%%%%%%%%
%%%%%%%%
\section{Skorokhod embedding and proof of Theorem~\ref{the.theorem2}}\label{Sect-theproof}
%%%%%%%%
%%%%%%%%

Assume that $D \in \grid$ and write $R=\inrad(D)$.
Recall that the Loewner driving function for the loop-erased random walk path $\tilde\gamma^R=\psi_D(\gamma)$ in $\D$ is denoted $W(t)=W_0e^{i \vt(t)}$. 
In Proposition~\ref{HM_ROC} we quantified that $W_0$ is close to uniform in terms of the inner radius $R$. Hence, to prove Theorem~\ref{the.theorem2} it will be enough to study $\vt(t)$, and show that it is close to a standard Brownian motion with speed 2. One way of proving this is to couple (a variant of) this process with Brownian motion, using Skorokhod embedding. The standard version of this technique is a method for coupling
sums of i.i.d.\ random variables and Brownian motion in such a way that with large probability the processes are close at any given time. In the proof, a sequence of times $\{t_{m_k}\}_{k\geq 1}$ is constructed which correspond to roughly constant increases in capacity for the time-reversed loop-erased random walk in $D$. Although $\{\vt(t_{m_k})\}_{k\geq 1}$ is not a random walk, it is almost a martingale, and in view of Section~\ref{keysect} we can use  the following version of Skorokhod embedding for martingales. 
Both~(6.1) and~(6.2) come directly from Theorem~A.1 in~\cite{Hall} while (6.3) follows from the proof.

\begin{lemma}[Skorokhod embedding theorem]
\label{skor}
Suppose $(M_k)_{k \le K}$ is an $({\mathcal F}_k)_{k \le K}$ martingale,
with  $\| M_{k+1}-M_{k}\|_\infty\le \delta$ and $M_0=0$ a.s.
There are stopping times
$0=\tau_0\le\tau_1\le \cdots \le\tau_K$  for standard Brownian motion 
$B(t)$, $t \ge 0$, such that
$(M_0,M_1, \ldots, M_K)$ and $(B(\tau_0),B(\tau_1), \ldots, B(\tau_K))$ 
have the same law.
Moreover, we have for $k=0,1,\dots,K-1$, 
\begin{align}\label{e.etau}
\E \bigl[ \tau_{k+1} - \tau_k  \,|\, B[0, \tau_k] \bigr]
&= \E \bigl[ (B(\tau_{k+1}) - B(\tau_k) )^2 \,|\, B [0, \tau_k]
\bigr], \\
\E \bigl[ (\tau_{k+1} - \tau_k)^p  \,|\, B[0, \tau_k] \bigr]
&\le C_p \E \bigl[ (B(\tau_{k+1}) - B(\tau_k))^{2p} \,|\, B [0,
\tau_k] 
\bigr] \label{e.moment},
\end{align}
for constants $C_p < \infty$,
and also
\begin {equation}
\label {e.bdt}
\tau_{k+1} \le \inf\left\{ t \ge \tau_k  : \, | B(t) - B(\tau_k)| \ge 
 \delta \right\}.
\end {equation}
\end{lemma}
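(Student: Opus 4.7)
The plan is to prove the lemma by induction on $K$, constructing $\tau_1, \ldots, \tau_K$ one at a time through a conditional one-step Skorokhod embedding applied inside the Brownian motion $B$ after time $\tau_k$. I would work on a probability space carrying, in addition to $B$, a sequence $(U_k)_{k \ge 1}$ of independent uniform random variables independent of $B$, so that regular conditional distributions can be realized as randomized exit rules.

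The key building block is the one-step embedding: given a random variable $Y$ with $\E Y = 0$ and $|Y| \le \delta$ a.s., a standard Brownian motion $W$ with $W(0)=0$, and an independent uniform $U$, there is a stopping time $\sigma$ for the enlarged filtration such that $W(\sigma) \stackrel{d}{=} Y$, $\sigma \le \inf\{t: |W(t)| \ge \delta\}$, $\E \sigma = \E Y^2$, and $\E \sigma^p \le C_p \E Y^{2p}$. One represents the law of $Y$ as a (randomized) mixture, indexed by $U$, of two-point distributions on pairs $\{a,b\}$ with $-\delta \le a \le 0 \le b \le \delta$ and weights $b/(b-a)$ at $a$ and $-a/(b-a)$ at $b$; the mean-zero property of $Y$ makes this possible, and a measurable selection produces $(a,b)$ from $(Y, U)$. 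One then sets $\sigma$ to be the first exit of $W$ from the resulting random interval. The identity $\E \sigma = \E Y^2$ comes from optional stopping applied to $W_t^2 - t$, and the higher-moment bound $\E \sigma^p \le C_p \E Y^{2p}$ follows from the Burkholder--Davis--Gundy inequality for the stopped martingale $W_{t \wedge \sigma}$ together with Doob's maximal inequality, using $[W]_\sigma = \sigma$ and $W_\sigma^2 = Y^2$.

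For the inductive step, assume $0 = \tau_0 \le \cdots \le \tau_k$ have been built as stopping times for $B$ with $(M_0,\ldots,M_k) \stackrel{d}{=} (B(\tau_0),\ldots, B(\tau_k))$. By the strong Markov property, $\tilde B(t) := B(\tau_k+t) - B(\tau_k)$ is a standard Brownian motion independent of $B[0,\tau_k]$. A regular version of the conditional law of $M_{k+1} - M_k$ given $(M_0,\ldots, M_k)$ is almost surely a mean-zero distribution supported in $[-\delta, \delta]$ by the martingale property and the $L^\infty$ bound on increments. Apply the one-step embedding to $\tilde B$ using a fresh uniform $U_{k+1}$, with this random target law, to obtain a stopping time $\sigma_{k+1}$; set $\tau_{k+1} = \tau_k + \sigma_{k+1}$. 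Then $\tau_{k+1}$ is a stopping time for $B$ in the enlarged filtration, and by construction the conditional law of $B(\tau_{k+1}) - B(\tau_k)$ given $B[0,\tau_k]$ matches that of $M_{k+1} - M_k$ given $(M_0,\ldots,M_k)$, so the joint laws agree through index $k+1$. The three assertions \eqref{e.etau}, \eqref{e.moment}, and \eqref{e.bdt} are precisely the corresponding one-step statements read conditionally on $B[0,\tau_k]$. The main technical point is the careful treatment of the regular conditional distribution and the measurability of the randomized interval $(a,b)$ so that $\tau_{k+1}$ is genuinely a stopping time for $B$; once the one-step embedding is in place, the induction is routine.
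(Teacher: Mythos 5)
Your construction is correct and is essentially the standard randomized Skorokhod embedding for martingales: the paper gives no proof of Lemma~\ref{skor} but cites Hall and Heyde, and the proof there proceeds exactly along your lines --- a one-step embedding via a randomized mixture of mean-zero two-point exit laws, optional stopping of $W_t^2-t$ for~\eqref{e.etau}, Burkholder--Davis--Gundy plus Doob for~\eqref{e.moment}, containment of the random exit interval in $[-\delta,\delta]$ for~\eqref{e.bdt}, and iteration through the regular conditional law of $M_{k+1}-M_k$ given the past. The only points to make explicit are that the $\tau_k$ are stopping times for the filtration of $B$ enlarged by the independent uniforms (with respect to which $B$ is still a Brownian motion), and that the conditional statements relative to $B[0,\tau_k]$ follow from their enlarged-filtration versions by the tower property; neither affects how the lemma is applied in Section~\ref{Sect-theproof}.
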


We will now prove Theorem~\ref{the.theorem2} using Proposition~\ref{keyestimate} and Lemma~\ref{skor}. Although the structure of the proof is similar to that of Theorem~3.7 in~\cite{LSW-aop}, some estimates need to be done with more care, in particular to ensure that the exponent in our rate of convergence is optimal for the method used in this paper. Rather than including the key steps and referring the reader to~\cite{LSW-aop}, we write the proof in detail here to allow a more fluid reading. 

The following result about the modulus of continuity of Brownian motion will be needed; see Lemma~1.2.1 of~\cite{Csorgo} for the proof.

\begin{lemma}\label{moc}
Let $B(t)$, $t \ge 0$, be standard Brownian motion. For each $\ee
>0$ there exists a constant $C=C(\ee)>0$ such that the inequality
\[
\PP\left(\sup_{t \in[0, T-h]}
  \sup_{s \in (0, h]} |B(t+s)-B(t)| \le v \sqrt{h}\right) \ge 1-
  \frac{CT}{h} e^{-\frac{v^2}{2+\ee}}
\]
holds for every positive $v$, $T$, and $0 < h < T$.
\end{lemma}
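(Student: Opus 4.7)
The statement is a quantitative version of L\'evy's modulus of continuity theorem, and the plan is to combine Gaussian tail estimates for Brownian increments with a careful discretization of time, using the slack $\ee$ to absorb the loss from passing between discrete and continuous suprema as well as from polynomial prefactors.

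First I would fix a small parameter $\delta = \delta(\ee) > 0$ and introduce a grid $t_k = k\delta^2 h$ for $k = 0,1,\dots,N$, where $N=\lceil T/(\delta^2 h)\rceil = O(T/h)$. For any pair $(t,s)$ with $t \in [0,T-h]$ and $s \in (0,h]$, pick indices $i\le j$ satisfying $t_i \le t < t_{i+1}$ and $t_j \le t+s < t_{j+1}$, and decompose
\[
B(t+s) - B(t) = [B(t_j) - B(t_i)] + [B(t+s) - B(t_j)] - [B(t) - B(t_i)].
\]
The leading term is a centered Gaussian with variance $t_j - t_i \le h(1 + 2\delta^2)$, while the two residuals are Brownian oscillations on subintervals of length at most $\delta^2 h$.

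Next I would bound the leading term by the reflection principle together with a union bound over the $N$ starting points $i$ and the $O(1/\delta^2)$ valid terminal points $j$:
\[
\PP\Bigl(\max_{i,j}|B(t_j) - B(t_i)| > (1-\delta)v\sqrt{h}\Bigr) \le \frac{CT}{\delta^4 h}\exp\!\left(-\frac{(1-\delta)^2 v^2}{2(1+2\delta^2)}\right).
\]
Choosing $\delta$ small in terms of $\ee$ makes $(1-\delta)^2/(1+2\delta^2) \ge 2/(2+\ee/2)$. The residual oscillations, occurring on very short intervals, are controlled by a dyadic chaining argument: recursively bounding oscillations at nested scales $2^{-n}\delta^2 h$ with threshold $2^{-n/2}\delta v\sqrt h$ and summing over $n\ge 0$, each level contributes an exponentially smaller probability, so the total residual contribution stays dominated by the main term.

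The main obstacle is obtaining the precise exponent $2+\ee$ and prefactor $CT/h$; naive bookkeeping (e.g., bounding oscillation on an interval of length $\sim h$ by twice a maximum) gives strictly worse constants such as $v^2/8$ or $v^2/32$ in the exponent. Avoiding this requires the multi-scale decomposition above, tuning $\delta$ in terms of $\ee$ so that the variance inflation $1+2\delta^2$ balances the threshold reduction $1-\delta$, and absorbing polynomial prefactors into $C(\ee)$. For small $v$ the stated bound is trivial by enlarging $C$ so that $(CT/h)\exp(-v^2/(2+\ee)) \ge 1$, and for large $v$ the prefactor loss $\log(\delta^{-4})$ is dominated by the gain in the exponent upon passing from $v^2/(2+\ee/2)$ to $v^2/(2+\ee)$. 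Combining the main and residual bounds yields the claimed inequality with $C=C(\ee)$.
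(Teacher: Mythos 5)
The paper does not prove this lemma at all: it is quoted as part of Lemma~1.2.1 of C\H{o}rs\H{o}--R\'ev\'esz \cite{Csorgo} (the reference \texttt{\{Csorgo\}} in the bibliography), so you are supplying a proof where the paper gives only a citation. Your overall strategy --- discretize at scale $\delta^2 h$ with $\delta=\delta(\ee)$, split $B(t+s)-B(t)$ into a grid increment plus two short residuals, control the grid increment by reflection plus a union bound over $O(T/(\delta^4 h))$ pairs, and tune $\delta$ against $\ee$ --- is the standard route and is essentially sound. The main-term estimate is fine as written.

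The residual step, however, contains a genuine error. First, allotting $(1-\delta)v\sqrt{h}$ to the grid increment leaves only $\delta v\sqrt{h}/2$ per residual, and the reflection principle on an interval of length $\delta^2 h$ then gives a tail of order $e^{-(\delta v\sqrt h/2)^2/(2\delta^2 h)}=e^{-v^2/8}$, independent of $\delta$ --- precisely the loss you say you are avoiding. Second, the dyadic chaining you propose to repair this does not converge: at level $n$ the per-interval probability is $e^{-(2^{-n/2}\delta v\sqrt h)^2/(2\cdot 2^{-n}\delta^2 h)}=e^{-v^2/2}$, \emph{constant in $n$}, while the number of intervals at level $n$ is of order $2^{n}T/(\delta^2 h)$, so the union bound summed over $n$ diverges. (Also the total residual budget $\sum_n 2^{-n/2}\delta v\sqrt h\approx 3.4\,\delta v\sqrt h$ exceeds the $\delta v\sqrt h$ you reserved.) The fix is to drop the chaining entirely and to rebalance the thresholds: give the grid increment $(1-2\delta)v\sqrt h$ and each residual the full $\delta v\sqrt h$. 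Then for a single grid cell the reflection principle gives $\PP(\sup_{0\le u\le \delta^2 h}|B(t_k+u)-B(t_k)|\ge \delta v\sqrt h)\le 2e^{-v^2/2}\le 2e^{-v^2/(2+\ee)}$, the union over the $O(T/(\delta^2 h))$ cells is of the required form $C(\ee)(T/h)e^{-v^2/(2+\ee)}$, and the main term only requires $(1-2\delta)^2/(1+2\delta^2)\ge 2/(2+\ee)$, which holds for $\delta$ a small enough multiple of $\ee$. With that modification your argument closes; note also that since $\delta$ depends only on $\ee$, the prefactor $T/(\delta^4 h)$ is already of the form $C(\ee)T/h$ and no further absorption into the exponent is needed.
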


The proof of convergence in~\cite{LSW-aop} uses Doob's maximal inequality. In order to obtain a better rate of convergence, we need a sharper maximal inequality for martingales, namely Lemma~1 of~\cite{haeusler}.

\begin{lemma}\label{haeusler}
Let $\xi_k, \, k=1,\ldots,K$, be a martingale difference sequence with
respect to the filtration $\mathcal{F}_k$.
If
$\lambda, u, v >0$, then it follows that
\begin{align*}
\PP\left(\max_{1\le j \le K}|\sum_{k=1}^j \xi_k| \ge \lambda\right) \le 
&\sum_{k=1}^K\PP(|\xi_k|>u)  \\
&\qquad+2\PP\left(\sum_{k=1}^K\E[\xi_k^2|\mathcal{F}_{k-1}] > v\right)\\
&\qquad\qquad+ 2\exp\{\lambda
u^{-1}(1-\log(\lambda u v^{-1}))\}.
\end{align*}
\end{lemma}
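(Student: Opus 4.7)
The plan is to combine three standard ingredients, one for each term on the right-hand side: truncation of the differences at level $u$ to handle large jumps, stopping when the predictable quadratic variation first exceeds $v$, and an exponential (Bennett-type) supermartingale to control the bounded martingale that remains.

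First I would introduce the truncated differences
\[
\xi'_k=\xi_k\mathbf{1}_{\{|\xi_k|\le u\}}-\E[\xi_k\mathbf{1}_{\{|\xi_k|\le u\}}\mid\mathcal{F}_{k-1}],\qquad S'_j=\sum_{k=1}^j\xi'_k,
\]
so that $|\xi'_k|\le 2u$ almost surely and $S'_j$ differs from $\sum_{k=1}^j\xi_k$ only by a small predictable compensator on the good event $\bigcap_k\{|\xi_k|\le u\}$; the complement of this event contributes the first sum on the right-hand side by a union bound. Next, at the stopping time $\nu=\min\{j\ge 0:\sum_{k=1}^{j+1}\E[\xi_k^2\mid\mathcal{F}_{k-1}]>v\}$, the event $\{\nu<K\}$ yields (with a factor of $2$ absorbing the truncation compensator) the second term, and on its complement I am reduced to proving a Bennett-type maximal bound for $S'_{j\wedge\nu}$, a martingale with bounded increments $\le 2u$ and bounded predictable quadratic variation $\le v$. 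For this reduced problem I would use the classical Bennett/Freedman exponential supermartingale
\[
M_j(\theta)=\exp\Bigl\{\theta S'_{j\wedge\nu}-(2u)^{-2}\phi(2\theta u)\sum_{k=1}^{j\wedge\nu}\E[(\xi'_k)^2\mid\mathcal{F}_{k-1}]\Bigr\},\qquad\phi(x)=e^x-1-x,
\]
whose nonnegativity and supermartingale property allow Doob's maximal inequality to upgrade the bound from the terminal value to the running maximum over $j\le K$ without loss. Optimizing over $\theta>0$, the minimizer $\theta\sim u^{-1}\log(\lambda u/v)$ then produces the Cram\'er--Legendre rate $\lambda u^{-1}(1-\log(\lambda u v^{-1}))$ that appears as the third term.

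The main obstacle is to obtain this sharp Bennett (Poisson-type) exponent rather than the weaker Bernstein bound $\exp(-c\lambda^2/(v+u\lambda))$; only the logarithmic (Poisson) gain is strong enough, when combined with the Skorokhod embedding of Lemma~\ref{skor}, to yield the $n^{-(1/24-\ee)}$ rate claimed in Theorem~\ref{the.theorem2}. The sharpness comes from keeping the exact exponential cumulant rate function $\phi(x)=e^x-1-x$ in the supermartingale rather than a quadratic surrogate, while the clean interaction between truncation, the quadratic-variation stopping $\nu$, and Doob's maximal inequality is what splits the final estimate into precisely the three separate terms recorded in the lemma.
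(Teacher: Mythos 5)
The paper does not prove this lemma; it is quoted verbatim (with a citation) from Haeusler's 1984 paper, so there is no ``paper's own proof'' to compare against. On its own merits, your sketch identifies the right three ingredients --- truncation at level $u$, stopping when the predictable quadratic variation first exceeds $v$, and a Bennett-type exponential supermartingale combined with Doob's maximal inequality --- and this is indeed the skeleton of Haeusler's argument. However, there is a real gap in how you set up the truncation. With the centered differences $\xi'_k = \xi_k\mathbf{1}_{\{|\xi_k|\le u\}} - \E[\xi_k\mathbf{1}_{\{|\xi_k|\le u\}}\mid\mathcal{F}_{k-1}]$ you only get $\|\xi'_k\|_\infty\le 2u$, and carrying $2u$ through the Bennett optimization yields the exponent $\lambda(2u)^{-1}\bigl(1-\log(2\lambda u v^{-1})\bigr)$, which for $\lambda u/v$ large is strictly weaker than the stated $\lambda u^{-1}\bigl(1-\log(\lambda u v^{-1})\bigr)$. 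Your parenthetical about ``a factor of 2 absorbing the truncation compensator'' conflates two separate issues (the doubled a.s.\ bound on the step and the drift introduced by truncation) and does not repair the exponent.

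The clean route, which is what Haeusler actually does, is to use the \emph{uncentered} truncation $\xi^*_k=\xi_k\mathbf{1}_{\{|\xi_k|\le u\}}$, so that $|\xi^*_k|\le u$, and to absorb the nonzero conditional mean $\mu_k=\E[\xi^*_k\mid\mathcal{F}_{k-1}]=-\E[\xi_k\mathbf{1}_{\{|\xi_k|>u\}}\mid\mathcal{F}_{k-1}]$ into the supermartingale normalizer. Since $|\mu_k|\le u^{-1}\E[\xi_k^2\mid\mathcal{F}_{k-1}]$ and $\phi(\theta\xi^*_k)\le u^{-2}(\xi^*_k)^2\phi(\theta u)$ for $\theta>0$, one gets, using $1+x\le e^x$ and the identity $\theta u+\phi(\theta u)=e^{\theta u}-1$,
\[
\E\bigl[e^{\theta\xi^*_k}\mid\mathcal{F}_{k-1}\bigr]\le \exp\Bigl\{\tfrac{e^{\theta u}-1}{u^2}\,\E[\xi_k^2\mid\mathcal{F}_{k-1}]\Bigr\},
\]
so that $\exp\{\theta\sum_{k\le j}\xi^*_k-(e^{\theta u}-1)u^{-2}\sum_{k\le j}\E[\xi_k^2\mid\mathcal{F}_{k-1}]\}$ is a supermartingale. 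Stopping at $\nu$, applying Doob's inequality on $\{\nu=K\}$, and choosing $\theta=u^{-1}\log(\lambda u v^{-1})$ then gives exactly $\exp\{\lambda u^{-1}(1-\log(\lambda u v^{-1}))\}$ after dropping the helpful $-v/u^2$ term, and a union bound over the two signs produces the two-sided bound (with a factor $2$ on the exponential term that the paper's statement omits but that is irrelevant to its use in Section 6). With that replacement your outline becomes a correct proof.
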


The strategy of the proof of Theorem~\ref{the.theorem2} is the following. In Section~\ref{keysect}, we showed that $\E[\Delta_m]$ is close to zero. We use the domain Markov property to iterate this estimate to construct a sequence of random variables $\Delta_{m_k}$ that almost forms a martingale. We adjust 
the sequence $\Delta_{m_k}$ to make it into a martingale, so that we can couple it with Brownian motion, using Skorokhod embedding.

The next step is to show that the stopping times $\tau_k$ obtained by Skorokhod embedding are likely to be close to the capacities $2t_{m_k}$ for all $k\leq K$ for some appropriate $K$. This is done by showing separately that each of these two quantities has high probability of being close to the natural time (the quadratic variation) of the martingale.

Once that we know that the two processes run on similar clocks, all that is left to do is show that they are likely to be close at all times. The key tool needed for that is Lemma~\ref{moc}.

\begin{proof}[Proof of Theorem~\ref{the.theorem2}] Choose, without loss of generality, $T \ge 1$ and assume $R \ge R_1> 8e^{20T}R_0$,
where $R_0$ is the constant from Proposition~\ref{keyestimate}. 
This choice of $R_1$ implies that Proposition~\ref{keyestimate} can be applied to $D$ slit by the initial piece of curve $\gamma$ up to capacity $20T$. Indeed, the Koebe one-quarter theorem implies that $\inrad(D \setminus \beta)/\inrad(D) \ge \exp\{-\ecap(\beta)\}/4$ if $D$ is slit by the curve $\beta$.

In what follows, most constants, which may depend on $T$, will be denoted by $c$ even though they may change from one line to the next. Define $m_0=0$ and $m_1=m$, where $m$ is defined as in~\eqref{m}. Inductively for $k=1, 2, 3, \ldots,$ define 
\[
m_{k+1}=\min\{ j > m_k: |t_j-t_{m_k}|\ge R^{-2\alpha} \text{ or } |\Delta_j-\Delta_{m_k}| \ge R^{-\alpha}\}.
\]
Define
\begin{equation}\label{K}
K= \lceil 10TR^{2\alpha}\rceil
\end{equation}
and note that $t_{m_K} \le 20 T$. Set $\eta(R)=R^{-\alpha}$, where $\alpha = (1/4-\eps)/3$. Then, by Proposition~\ref{keyestimate} and the domain Markov property of loop-erased random walk, we can find a universal constant $c$ such that
\begin{equation}\label{DeltaInc}
|\E[\Delta_{m_{k+1}}-\Delta_{m_k}|
\mathcal{F}_k]| \le c\eta^3
\end{equation}
and 
\begin{equation}\label{DeltaVar}
|\E[(\Delta_{m_{k+1}}-\Delta_{m_{k}})^2-2(t_{m_{k+1}}-t_{m_{k}})|\mathcal{F}_k]|
\le c \eta^3,
\end{equation}
for $k=0, \ldots, K-1$, where $\F_k$ is the filtration generated by $\gamma_n[0,m_k]$.

For $j=1,\ldots, K$, define
\[
\xi_j=\Delta_{m_{j}}-\Delta_{m_{j-1}}-\E[\Delta_{m_{j}}-\Delta_{m_{j-1}}|\mathcal{F}_{j-1}].
\] 
This is clearly a martingale difference sequence and $M$ defined by $M_0=0$ and 
$$M_k=\sum_{j=1}^k \xi_j$$
for $k=1,\ldots, K,$ is a martingale with respect to
$\F_k$. Note that 
$$\| M_{k}-M_{k-1}\|_{\infty}\le 4\eta$$
 by~\eqref{Deltaupperbound} for $R$ sufficiently large.

Skorokhod embedding allows us to find stopping times
$\{\tau_k\}$ for standard Brownian
motion $B$ and a coupling of $B$ with the martingale $M$ (and the loop-erased random walk path $\gamma$) such that $M_k=B(\tau_k)$, 
$k=0, \ldots, K$. 

Consider the natural time associated to $M$, namely
\[Y_k=\sum_{j=1}^k \xi_j^2, \quad k=1, \ldots, K.\]
We will show that  $2t_{m_k}$ is close to the
stopping time $\tau_k$ for every $k\leq K$ by showing separately that each of these quantities is close to $Y_k$.
We first show that  $Y_k$ is close to $2t_{m_k}$ for every $k\leq K$. Set $\sigma_k=2t_{m_k}-2t_{m_{k-1}}$. For $\phi=3\eta|\log \eta|$ we have
\begin{align*}
\PP&\left(\max_{1 \le k \le K}|\sum_{j=1}^k(\xi_j^2 - \sigma_j)| \ge
  \phi\right)\\
&\qquad\qquad\le \PP\left(\max_{1 \le k \le
    K}|\sum_{j=1}^k(\xi_j^2 - \E[\xi_j^2|\mathcal{F}_{j-1}])| \ge
  \phi/3\right) \\
&\qquad\qquad\qquad+ \PP\left(\max_{1 \le k \le
    K}|\sum_{j=1}^k(\E[\xi_j^2|\mathcal{F}_{j-1}]-\E[\sigma_j|\mathcal{F}_{j-1}])| \ge
  \phi/3\right) \\
&\qquad\qquad\qquad\qquad+ \PP\left(\max_{1 \le k \le
    K}|\sum_{j=1}^k(\sigma_j-\E[\sigma_j|\mathcal{F}_{j-1}])| \ge
  \phi/3\right) \\
&\qquad\qquad=:p_1+p_2+p_3.
\end{align*}
We estimate $p_1$ using the maximal inequality from Lemma~\ref{haeusler} with $\lambda=\eta|\log \eta|$, $u=\eta$, and
$v=e^{-2}\lambda u$. This gives
\begin{align*}
p_1 &\le  \sum_{j=1}^K \PP\left( |\xi_j^2-\E[\xi_j^2|\mathcal{F}_{j-1}]|
  > \eta \right)\\
&\qquad + 2\PP\left( \sum_{j=1}^K
  \E\left[(\xi_j^2-\E[\xi_j^2|\mathcal{F}_{j-1}])^2| \F_{j-1}\right]
  > e^{-2}\eta^2|\log \eta|
\right)\\
&\qquad\qquad+2\eta.
\end{align*}
Since $\max_j|\xi_j|\le 4\eta$, the first sum is equal to zero for $R$ sufficiently large. This bound
and the definition of $K$ imply that 
\[\sum_{j=1}^K
  \E\left[(\xi_j^2-\E[\xi_j^2|\mathcal{F}_{j-1}])^2| \F_{j-1}\right]
  \le 16\lceil 10 T \rceil \eta^2.
\] 
It follows that the second sum also is zero if $R$ is large enough. To get a bound on $p_2$ we note that by~\eqref{DeltaInc} and~\eqref{DeltaVar},
\begin{align*}
|\E[\xi_j^2|&\mathcal{F}_{j-1}]-\E[\sigma_j|\mathcal{F}_{j-1}]|\\
&=|\E[(\Delta_{m_j}-\Delta_{m_{j-1}})^2|\F_{j-1}]-2\E[t_{m_j}-t_{m_{j-1}}|\F_{j-1}]+
O(\eta^4)|\\
&\le c\eta^3.
\end{align*}
Using the triangle inequality and summing over $j$ we see that $p_2=0$ if
$R$ is large enough. Finally $p_3$ is estimated in a similar fashion
as $p_1$ using the inequality $\max_k \sigma_k \le 3\eta^{2}$.
This shows that
\begin{equation}\label{mg-time}
\PP\left(\max_{1 \le k \le K}|Y_k-2t_{m_k}| \ge 3\eta|\log \eta |\right) =O(\eta)
\end{equation}
for all $R$ large enough.

We now show that $Y_k$ is close to $\tau_k$ for every $k\leq K$.
Set $\zeta_k=\tau_{k}-\tau_{k-1}$ and let $\mathcal{G}_k$ denote the $\sigma$-algebra
generated by $B[0,\tau_k]$. Then, again with
$\phi=3\eta|\log \eta|$, we can write
\begin{align*}
\PP&\left(\max_{1 \le k \le K}|\sum_{j=1}^k(\xi_j^2 - \zeta_j)| \ge
  \phi\right)\\
&\qquad\qquad\le \PP\left(\max_{1 \le k \le
    K}|\sum_{j=1}^k(\xi_j^2 - \E[\xi_j^2| \mathcal{G}_{j-1}])| \ge
  \phi/3\right) \\
&\qquad\qquad\qquad+ \PP\left(\max_{1 \le k \le
    K}|\sum_{j=1}^k(\E[\xi_j^2|\mathcal{G}_{j-1}]-\E[\zeta_j|\mathcal{G}_{j-1}])| \ge
  \phi/3\right) \\
&\qquad\qquad\qquad\qquad+ \PP\left(\max_{1 \le k \le
    K}|\sum_{j=1}^k(\zeta_j-\E[\zeta_j|\mathcal{G}_{j-1}])| \ge
  \phi/3\right) \\
&\qquad\qquad=:p_4+p_5+p_6.
\end{align*}
The estimate of $p_4$ is identical to the estimate of $p_1$ above, and by~\eqref{e.etau} we conclude $p_5=0$. (Recall that $\xi_j^2=(B(\tau_j)-B(\tau_{j-1}))^2$.) It remains to
estimate $p_6$. We use Lemma~\ref{haeusler} to get
\begin{align}\label{p6}
p_6&\le  \sum_{j=1}^K \PP\left( |\zeta_j-\E[\zeta_j|\mathcal{G}_{j-1}]|
  > \eta \right) \nonumber\\
&\qquad + 2\PP\left( \sum_{j=1}^K
  \E\left[(\zeta_j-\E[\zeta_j|\mathcal{G}_{j-1}])^2| \mathcal{G}_{j-1}\right]
  > e^{-2}\eta^2|\log \eta|
\right)\\
&\qquad\qquad+2\eta \nonumber.
\end{align}
By the definition of $K$ as in~\eqref{K}, Chebyshev's inequality,~\eqref{e.etau} and \eqref{e.moment} we have 
\begin{align*}
\sum_{j=1}^K\PP\left( |\zeta_j-\E[\zeta_j|\mathcal{G}_{j-1}]|
  > \eta \right) & \le \sum_{j=1}^K \eta^{-3}\E[
|\zeta_j-\E[\zeta_j|\mathcal{G}_{j-1}]|^3] \\
& \le C \eta.
\end{align*}
Moreover, since $\E\left[(\zeta_j-\E[\zeta_j|\mathcal{G}_{j-1}])^2|
  \mathcal{G}_{j-1}\right] = O(\eta^4)$, the probability~\eqref{p6} equals
$0$ for $R$ large enough.
Hence $p_6=O(\eta)$.
This shows that
\begin{equation}\label{bm-time}
\PP\left(\max_{1 \le k \le K}|Y_k-\tau_k| > 3\eta|\log \eta|\right)=O(\eta),
\end{equation}
for $R$ large enough.

 Equations~\eqref{mg-time} and~\eqref{bm-time} now imply that
\begin{equation}\label{closetimes}
\PP\left(\max_{1 \le k \le K}|2t_{m_k}-\tau_k| > 6\eta|\log \eta|\right)=O(\eta),
\end{equation}
for $R$ large enough.

Notice that~\eqref{e.bdt} implies that for $k\le K$ ,
\begin{equation}\label{Bdisplacement}
\sup\{|B(t)-B(\tau_{k-1})|: t \in [\tau_{k-1}, \tau_{k}]\} \le 4 \eta,
\end{equation}
and by the definition of $m_k$ and~\eqref{Deltaupperbound} we have for $R$ large enough
\begin{equation*}%\label{thetadisplacement}
\sup\{|\Delta_{m_k}-\vt(t)|: t \in [t_{m_{k-1}}, t_{m_{k}}]\} \le 2 \eta.
\end{equation*}
Summing over $k$ using the definition of $\xi_j$ and $K$, we get from \eqref{DeltaInc} that
\begin{equation*}%\label{Delta-M}
\sup\{|\Delta_{m_k}-M_k|: k \le K\} \le c T \eta.
\end{equation*}
By summing, we have $Y_K+t_{m_K} \ge K \eta^2 \ge 10T$. (Consult~\cite{LSW-aop} between~(3.21) and~(3.22) for details.) Hence, the event that $t_{m_K} < 2T$ is contained in the event that $|Y_K- 2t_{m_K}|\ge 4T$. It follows from~\eqref{mg-time} that
\begin{equation}\label{capT}
\PP(t_{m_K}<2T)=O(\eta).
\end{equation}
Set $h=h(\eta)=\eta |\log \eta|$ and consider the event 
\begin{align*}
\mathcal{E} = \{t_{m_K} \ge 2T\} &\cap \left\{\sup_{t \in[0, 2T-h]}
  \sup_{s \in (0, h]} |B(t+s)-B(t)| \le \sqrt{6 h |\log h|}\right\}\\
&\qquad\qquad\cap \left\{\max_{k \le K}|\tau_k-2t_{m_k}| \le
6h\right\}.
\end{align*}
Then in view of the inequalities~\eqref{closetimes},~\eqref{capT}, and Lemma~\ref{moc} (with $\epsilon =1$ and $v=\sqrt{6 |\log h|}$)
we have $\PP(\mathcal{E}^c)=O(\eta |\log \eta|)$. Note that on $\mathcal{E}$ we have that
\begin{align*}
  \sup&\{|\vt(t)-B(2t)|: t\in [0,T]\} \\
  &\le \max_{1\le k \le
    K}\biggl(\sup\{|\vt(t)-\Delta_{m_k}|: t \in [t_{m_{k-1}}, t_{m_k}]\} 
   + |\Delta_{m_k}-B(\tau_{k})| \\
      &\qquad\qquad+ \sup\{|B(\tau_{k})-B(2t)|: t \in [t_{m_{k-1}}, t_{m_k}]\}\biggr),
\end{align*}
and the first two terms are  $O(T \eta)$ uniformly in $k$. For the
last term, we can use~\eqref{Bdisplacement} to see that on $\mathcal{E}$, we have
\begin{align*}
  \sup\{|B(\tau_{k})&-B(2t)|: t \in [t_{m_{k-1}}, t_{m_{k}}]\}\\
  &= \sup\{|B(\tau_{k})-B(s)|: s \in [2t_{m_{k-1}}, 2t_{m_{k}}]\} \\
  &\le  \sup\{|B(\tau_{k})-B(s)|: s \in [\tau_{k-1}-6h,
  \tau_{k}+6h]\}\\
   &\le 4 \eta + \sup\{|B(\tau_{k-1})-B(s)|: s \in [\tau_{k-1}-6h,
  \tau_{k-1}]\} \\
 &\qquad\qquad +\sup\{|B(\tau_{k})-B(s)|: s \in [\tau_{k},
  \tau_{k}+6h]\} \\
   &\le 4 \eta + c (\eta \varphi(1/\eta))^{1/2},
\end{align*}
where $\varphi$ is a subpower function; that is, $\varphi(x)=o(x^{\ee})$ for any $\ee >0$.
It follows that we may couple $\vt$ and $B$ so that 
\[
\PP\left(\sup_{t \in [0,T]}\{|\vt(t)-B(2t)|\} > 
c_1 T\eta^{1/2} \varphi_1(1/\eta) \right) < c_2 \eta | \log \eta|,
\]
where we recall that $\eta(R)=R^{-(1/12-\eps)}$, and $\varphi_1$ is also a subpower
function. This in turn implies that there exist constants $c_1$, $c_2$ such that for every $\eps >0$, all $R$ sufficiently large,
\[
\PP\left(\sup_{t \in [0,T]}\{|\vt(t)-B(2t)|\} > 
c_1R^{-(1/24-\eps)} T \right) < c_2R^{-(1/24-\eps)}.
\]
Together with Proposition~\ref{HM_ROC} this estimate concludes the proof of the theorem.
\end{proof}

%%%%%%%%
%%%%%%%%
\section{Some remarks on the derivation and optimality of the rate}\label{discuss}
%%%%%%%%
%%%%%%%%

We will now briefly review how we obtain the exponent of
$1/24$ in Theorem~\ref{the.theorem2} and how the different scales on
which we work fit together.

There are three main contributions to the exponent. The first comes from the
rate of convergence of the martingale observable as given in
Theorem~\ref{MGthm}, the second comes from
Section~\ref{keysect}, and the third comes from the Skorokhod
embedding step of Section~\ref{Sect-theproof}. There are essentially four
different scales that come into play: the \emph{microscopic} scale which is
basically the lattice size, the \emph{MO} scale corresponding to the
rate of convergence of the martingale observable, the
\emph{mesoscopic} scale on which the discrete
driving function is close to a martingale (with error terms on the MO
scale), and finally the \emph{macroscopic} scale which is of constant order.

Suppose $D \subsetneq \C$  is a simply connected domain with $0
\in D$ and $\inrad(D)=1$, and
 let $D^n$ be the $n^{-1}\Z^2$ grid domain approximation of $D$. Let
 us first consider the rate of convergence of the martingale
 observable, which we denote here by
 $\delta = \delta(n)$; this is the MO scale. The
rate in this step comes essentially from the estimate comparing the
continuous and discrete
 Green's functions in Theorem~\ref{main_green_thm} and is given in
Theorem~\ref{MGthm} to be $\delta(n)=n^{-(1/4-\epsilon)}$. This error
term,
 which we believe to be optimal for the chosen martingale
 observable, then determines a proper
mesoscopic scale on which the driving function corresponding to the
loop-erased random walk path is close to a martingale.  Suppose
that $t_j$ denotes the capacity of $\psi_D(\gamma^n[0,j])$, and let
$\Delta_j=\theta_n(t_j)$. In~\eqref{m}, the integer $m$ is defined
formally, but roughly it is defined so that $t_m \approx \delta^{2/3}$
and $\Delta_m \approx \delta^{1/3}$. Thus $\delta^{2/3}$ is the chosen
mesoscopic scale. Let us remark that the appropriate pairwise
relationship between $t_m$ and $\Delta_m$ is essentially given by
properties of the Loewner equation and the fact that the limiting
driving function is expected to be a Brownian motion which is
H\"older-$h$ for any $h < 1/2$. However, at this
point, the precise relationship between the pair $(t_m, \Delta_m)$ and
$\delta$ need not necessarily be as above. Indeed, for any $\alpha <1$
one could define $m$ so that $\Delta_m \approx \delta^{\alpha/2}$ and
$t_m \approx \delta^{\alpha}$, and the proof of the main estimate in
Section~\ref{keysect} would still work. Choosing some $\alpha > 2/3$
could then in principle improve the error terms in the Skorokhod
embedding step (see below). However, our current argument uses in an
essential way that $\alpha=2/3$.

The penultimate goal is to show that the Loewner driving function of a
macroscopic piece of the loop-erased random walk path is close to a Brownian
motion. The domain Markov property of loop-erased random walk allows
us to iterate the estimates for mesoscopic pieces of the curve to
``build'' a macroscopic piece of it. This is done in the final step of
the proof, where the
Skorokhod embedding scheme is employed.
The convergence rate in this step is essentially determined by the
maximal step size of the discretized driving function, that is, the
magnitude of $|\Delta_m|$ which we here write as $\delta^{1/3}$. The
resulting error term after Skorokhod embedding is then roughly
\begin{equation}\label{optimality}
\delta^{(1/3)(1/2)}=O(n^{-(1/4)(1/3)(1/2)+\epsilon)}) = O(n^{-(1/24-\eps)}).
\end{equation}
We believe this is optimal (up to
subpower correction), given the magnitude of $|\Delta_m|$. This is
because it is known (see~\cite{Borovkov} for details) in the ``nicer'' case of
(one-dimensional) simple random walk that the Skorokhod embedding
scheme gives a rate of the square root of the step size of the
normalized random walk, in agreement with our estimates.

As mentioned above, we believe that the exponents $1/4$ and $1/2$ in~\eqref{optimality} are optimal. The conjectured optimality of the
$1/4$ exponent only pertains to our specific choice of martingale
observable. Our general method may allow for some improvement of the
exponent $1/3$, as long as it remains smaller than $1/2$. We also believe that much of our work can be used for the
derivation of the rate of convergence for other processes known to
converge to SLE, in the sense that it should be usable as it is or
with minor modifications once one has a rate of convergence for an
appropriate martingale observable; see~\cite{FJVROC}. This should also be true for the
derivation of the rate of convergence with respect to Hausdorff
distance, at least as long as the limiting curve is simple.

%%%%%%%%
%%%%%%%%
\section{Hausdorff convergence} \label{Sect-hausdorff}
%%%%%%%%
%%%%%%%%

Recall that the Hausdorff distance between two compact sets $A$, $B \subset \C$ is defined by
\[
d_H(A,B)=\inf\left\{\ee >0: A \subset \bigcup_{z \in B}\ball(z, \ee), \, B \subset \bigcup_{z \in A}\ball(z, \ee)\right\}.
\]
In this section we prove a rate of convergence result for the pathwise convergence with respect to Hausdorff distance. 
We use the notation from Section~\ref{Intro} and in addition we let $\tilde{\gamma}$ denote the radial SLE$_2$ path in $\Disk$ started from $1$. In this section it is convenient to parameterize $\tilde{\gamma}$ by \emph{half-plane capacity}. For the rest of this section we let 
\[
\vp(z):=\frac{i-z}{i+z}.
\] 
Note that $\vp$ maps the upper half-plane conformally onto the unit disk with $\vp(i)=0$, $\vp(0)=1$, and $\vp(\infty)=-1$.
We parameterize $\tilde{\gamma}(t)$, $0 \le t \le t_0$, so that the image in $\Half$ satisfies
\[
\hcap[\vp^{-1}(\tilde{\gamma}[0,t])]=2t.
\] 
We assume that $t_0$ is sufficiently small so that $\ecap(\tilde{\gamma}[0,t_0])$ is bounded by a constant a.s., specifically we may take, for example, $t_0=1/16$. (By comparing with the chordal Loewner chain driven by a constant driving function one can see that the conformal radius of $\mathbb{H} \setminus \vp^{-1}(\tilde{\gamma}[0,t])$ seen from $i$ is bounded below if $\hcap[\vp^{-1}(\tilde{\gamma}[0,t])]$ is chosen sufficiently small.) We allow all constants in this section to depend on $t_0$.

Our approach uses a uniform derivative estimate for radial SLE$_2$ that we derive from an estimate on the growth of the derivative of the chordal SLE mapping from~\cite{rohde_schramm}. This is the reason why it is convenient to use the half-plane capacity parameterization. 

\begin{theorem}\label{hd.thm} 
Let $0<t \le t_0$ where $t_0$ is sufficiently small. There exists $c< \infty$ with the property that for $n$ sufficiently large there is a coupling of $\tilde{\gamma}^n$ with $\tilde{\gamma}$ such that
\[
\PP\left(d_H\left(\tilde{\gamma}^n[0,t] \cup \partial \Disk, \tilde{\gamma}[0,t]  \cup \partial \Disk\right) > c(\log n)^{-p} \right) < c(\log n)^{-p}
\]
whenever $p<(15-8\sqrt{3})/66$.
 \end{theorem}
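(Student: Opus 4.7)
The overall strategy is to combine the polynomial driving function coupling of Theorem~\ref{the.theorem2} with a uniform derivative estimate for radial SLE$_2$, the latter being obtained by transporting a Rohde--Schramm estimate for chordal SLE$_2$ through $\vp$. The use of the half-plane capacity parameterization in this section is precisely to make the chordal estimate directly applicable on the $\Half$ side. The first step is therefore to couple $\tilde{\gamma}^n$ with $\tilde{\gamma}$ via Theorem~\ref{the.theorem2} and then verify that the bilipschitz time change between disk capacity and half-plane capacity, which is well-defined on $[0,t_0]$ since $\ecap(\tilde{\gamma}[0,t_0])<\iy$ almost surely, preserves the driving function closeness. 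The result is a coupling for which $\sup_{0 \le s \le t} |W_n(s)-W(s)| \le n^{-(1/24-\eps)}$ with high probability, where $W$ and $W_n$ denote the driving functions in a common parameterization.

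Next I would record the derivative estimate in the form needed here. For chordal SLE$_\kappa$, Rohde--Schramm show that for $z=x+iy$ in $\Half$ one has $\PP(|f_t'(z)| \ge y^{-a}) \le C y^{b}$ for explicit exponents $a,b>0$ depending on $\kappa$; at $\kappa=2$ these are tractable. Transporting via $\vp^{-1}$ yields the radial statement that with probability at least $1-C\eta^b$ one has $|f_s'(w)| \le \eta^{-a}$ uniformly for $w \in \Disk$ with $1-|w| \ge \eta$ and for $s \in [0,t]$. A polynomial union bound over a mesoscopic net of scales and times upgrades this to a statement that is stable under small perturbations of the driving function.

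The heart of the argument is a tip comparison at each capacity time $s \in [0,t]$. Writing $\tilde{\gamma}(s)=\lim_{r \to 1^-}f_s(rW(s))$ and similarly for $\tilde{\gamma}^n$, I would fix a mesoscopic parameter $r=1-\eta$ and apply the triangle inequality
\[
|\tilde{\gamma}(s)-\tilde{\gamma}^n(s)| \le |\tilde{\gamma}(s)-f_s((1-\eta)W(s))| + |\tilde{\gamma}^n(s)-f^n_s((1-\eta)W_n(s))| + \mathcal{M}(s),
\]
where $\mathcal{M}(s)=|f_s((1-\eta)W(s))-f^n_s((1-\eta)W_n(s))|$. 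The first two terms are $O(\eta^{1/2})$ by Lemma~\ref{DRbeurling} and a standard Loewner chain argument, while $\mathcal{M}(s)$ is bounded by a Gronwall estimate for the Loewner ODE using the derivative bound $\eta^{-a}$, yielding $\mathcal{M}(s)=O(n^{-(1/24-\eps)}\eta^{-a'})$ for an appropriate $a'$. Optimizing the tradeoff between the Beurling exponent $1/2$, the Loewner flow exponent $a'$, and the failure exponent $b$ of the derivative estimate at $\kappa=2$ produces the numerical exponent $(15-8\sqrt{3})/66$. To close the Hausdorff argument, the same comparison with the roles of $\tilde{\gamma}$ and $\tilde{\gamma}^n$ swapped gives the opposite inclusion; adding $\partial \Disk$ to both target sets absorbs the possibility that one curve approaches the boundary at a point where the other does not, which is unavoidable at the Beurling scale.

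The main obstacle is the logarithmic rather than polynomial final rate. Because the derivative estimate holds only off a polynomially small exceptional event, controlling both curves on a mesoscopic scale $\eta$ throughout $[0,t]$ forces $\eta$ to lie above a fixed positive power of $n^{-(1/24-\eps)}$ after a union bound, while balancing the Beurling and Loewner-flow terms simultaneously requires $\eta$ to shrink in $n$. The three constraints are only simultaneously satisfied with $\eta$ a negative power of $\log n$, and tracking the dependence on $\kappa=2$ through the Rohde--Schramm exponents is where the precise constant $(15-8\sqrt{3})/66$ emerges.
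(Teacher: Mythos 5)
Your high-level strategy is aligned with the paper's: couple the driving functions via Theorem~\ref{the.theorem2}, transfer a Rohde--Schramm derivative estimate to radial SLE$_2$, control the conformal maps on a mesoscopic scale $\delta$ via a Gronwall-type estimate for the Loewner ODE, and optimize. But there is a concrete error in the central triangle inequality, and a glossed-over step in transporting the chordal derivative estimate, both of which matter for getting the exponent right.

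The error: you claim that the ``inside-to-tip'' terms $|\tilde{\gamma}(s)-f_s((1-\eta)W(s))|$ and its discrete analogue are $O(\eta^{1/2})$ by Lemma~\ref{DRbeurling}. That lemma bounds the diameter of the \emph{image} of a curve under a conformal map \emph{into} $\D$; here you need to bound the image of the radial segment $[(1-\eta)W(s), W(s)]$ under $f_s$, which maps \emph{out} of $\D$ onto the slit disk. The Beurling projection theorem gives no upper bound in that direction (indeed $f_s'$ blows up near the tip). The correct bound is obtained by integrating the derivative assumption $|f_s'((1-\delta)\zeta)|\le\delta^{-\beta}$, which yields $O(\delta_0^{1-\beta})$. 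Since the Rohde--Schramm estimate is only useful for $\beta>\beta_0=(14+4\sqrt{6})/25\approx 0.95$, the true exponent $1-\beta\approx 0.05$ is much worse than $1/2$, and the $1/2$ that does appear in the final rate has a different origin: it comes from taking $\delta_0\asymp|\log\epsilon_n|^{-1/2}$ so that the Gronwall bound $\epsilon_n(e^{c_0/\delta^2}-1)$ stays small. With your claimed $O(\eta^{1/2})$ bound the optimization would give a strictly better and false rate; the correct balance is $\rho(\beta)=1-\beta$, yielding $(15-8\sqrt{3})/33$ for the common value and then halving because of the $|\log\epsilon_n|^{-1/2}$ scaling of $\delta_0$.

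The glossed-over step: the Rohde--Schramm derivative estimate is for \emph{chordal} SLE$_2$. Conjugating by $\vp:\Half\to\D$ turns the chordal chain into the radial one only at the level of domains, not distributions. One has to recognize the conjugated radial SLE$_2$ map as a chordal SLE$(2;-4)$ Loewner chain (force point at $i$) and then use the explicit local martingale $M_t=y_t/(x_t^2+y_t^2)$ and its uniform bound on $[0,t_0]$ to compare the two laws (the Girsanov step in \eqref{jul9.1}). Simply ``transporting via $\vp^{-1}$'' is not a valid inference. Finally, your pointwise tip comparison at equal capacity times is a plausible alternative to the paper's set-inclusion argument in Lemma~\ref{sept26.lemma}, but as stated it does not address the fact that the two curves' capacity parameterizations need not align; the paper sidesteps this by comparing $h_j(\{|z|\ge 1-\delta_0\})$ as sets rather than comparing tips at matched times.
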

\begin{remark*}
 We believe that it is possible, with substantially more work, to use a strategy similar to the one in this section to obtain a \emph{power-law} convergence rate for the image of the LERW path in $\Disk$ as a (parameterized) curve. This is the object of another paper in progress (\cite{FJVROC}). 
 \end{remark*}
We will first consider a deterministic setting with two solutions to the radial Loewner equation driven by functions which are at uniform distance at most $\ee>0$. Using the reverse-time (radial) Loewner equation it is not hard to quantify, in terms of $\ee$, the uniform distance between the solutions restricted to a set at distance at least $\delta=\delta(\ee)$ from the boundary of the unit disk. If some additional information about the growth of the derivative of one of the Loewner maps is known, this can be used to estimate the Hausdorff distance between the boundaries. The details are given in Lemma~\ref{sept26.lemma}. We will later show that the assumptions of this lemma are satisfied on an event of large probability in the coupling of Theorem~\ref{the.theorem2}.
\begin{lemma}\label{sept26.lemma}
Let $0< T < \infty$ be fixed. For $j=1$, $2$, let 
\[h_j(t,z): \D \to \D \setminus \gamma_j[0,t]\] 
be the solution to the radial Loewner equation generated by the simple curve $\gamma_j$ such that $\partial h_j(\mathbb{D}) = \partial \mathbb{D} \cup \gamma_j[0,T]$ with $W_j$ as driving function and write $h_j(z)=h_j(T,z)$.  Suppose that
\[
\sup_{0\le t \le T}|W_1(t)-W_2(t)| < \ee,
\]
where $\ee>0$ is sufficiently small.
Suppose further that there exists $0< \beta <1$ such that for all $\zeta \in \partial \D$,
\begin{equation}\label{sept26.3}
|h_1'((1-\delta)\zeta)| \le \delta^{-\beta} \textrm{ whenever } \delta < |\log \ee|^{-1/2}.
\end{equation}
Then there is a constant $c < \iy$ depending only on $T$ and  $\beta$ such that
\[
d_H(\gamma_1[0,T] \cup \partial \D, \, \gamma_2[0,T] \cup \partial \D) \le 
c\,|\log \ee|^{-(1-\beta)/2}.
\]
\end{lemma}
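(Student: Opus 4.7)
The plan is to set $\delta = |\log \ee|^{-1/2}$, which is the largest scale on which (\ref{sept26.3}) applies, and to interpose a ``slightly interior'' curve between each $\gamma_j[0,T] \cup \partial \D$ and its approximant. Specifically, define $\Gamma_j := h_j(T, \{|z|=1-\delta\})$. I will show in sequence that $d_H(\Gamma_1, \gamma_1[0,T] \cup \partial \D) = O(\delta^{1-\beta})$ using (\ref{sept26.3}) directly, that an analogous derivative bound transfers to $h_2$ and yields $d_H(\Gamma_2, \gamma_2[0,T] \cup \partial \D) = O(\delta^{1-\beta})$, and finally that $d_H(\Gamma_1, \Gamma_2) = o(\delta^{1-\beta})$ via continuity of Loewner evolution in the driving function. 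Three applications of the triangle inequality then produce the claim.

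For the first step, for each $\zeta \in \partial \D$ the hypothesis (\ref{sept26.3}) applied along the radial segment $r \mapsto (1-r)\zeta$ for $r \in (0,\delta]$ gives
\[
\bigl|h_1(T, (1-\delta)\zeta) - \lim_{r \to 0^+} h_1(T,(1-r)\zeta)\bigr| \le \int_0^\delta r^{-\beta}\, dr = \frac{\delta^{1-\beta}}{1-\beta},
\]
and the radial limit exists by this integrability and sweeps out $\gamma_1[0,T] \cup \partial \D$ as $\zeta$ varies. The analogous bound for $h_2$ requires a derivative estimate that $h_2$ does not a priori enjoy; I would obtain it from a continuity estimate of the form $\sup_{|z|\le 1-\delta/2}|h_1(T,z)-h_2(T,z)| \le F(\ee,\delta,T)$ combined with Cauchy's estimate on a circle of radius $\delta/4$, giving
\[
|h_2'((1-\delta)\zeta)| \le |h_1'((1-\delta)\zeta)| + \frac{4F(\ee,\delta,T)}{\delta} \le 2\delta^{-\beta},
\]
provided $F(\ee,\delta,T) = o(\delta^{1-\beta})$. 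The same radial integration then controls $d_H(\Gamma_2, \gamma_2[0,T] \cup \partial \D)$.

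For the comparison $d_H(\Gamma_1, \Gamma_2)$, the continuity estimate directly gives pointwise closeness $|h_1(T,(1-\delta)\zeta) - h_2(T,(1-\delta)\zeta)| \le F(\ee,\delta,T)$ for every $\zeta$, and the derivative estimate (now available for both $h_j$) yields the $\zeta$-Lipschitz bound $|h_j(T,(1-\delta)\zeta_1) - h_j(T,(1-\delta)\zeta_2)| \le C\delta^{-\beta}|\zeta_1-\zeta_2|$, so the two parameterized curves are Hausdorff-close at the $F$-scale up to a bounded factor. Assembling the three estimates gives $d_H(\gamma_1[0,T] \cup \partial \D, \gamma_2[0,T] \cup \partial \D) \le c\,|\log \ee|^{-(1-\beta)/2}$.

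The main obstacle is establishing the continuity estimate $F(\ee,\delta,T) = o(\delta^{1-\beta})$ when $\delta = |\log \ee|^{-1/2}$. A naive Gronwall argument applied to the forward Loewner ODE for $g_t = h_t^{-1}$ at an interior point yields $F = \ee \exp(CT/\delta^2) = \ee^{1-CT}$, which is useless once $T > 1/C$. The remedy, following the strategy of Rohde--Schramm, is to exploit (\ref{sept26.3}) itself to tame the flow: the exponential blowup arises only when $g_t$ approaches the tip $W_j(t)$, which is precisely the region where (\ref{sept26.3}) provides control. I would carry this out by analyzing the composition $h_1(T,\cdot)^{-1} \circ h_2(T,\cdot)$ on $\{|z| \le 1-\delta/2\}$, using Koebe distortion and the bound (\ref{sept26.3}) to bootstrap the naive estimate into one polynomial in $\ee$ with constants depending only on $T$ and $\beta$.
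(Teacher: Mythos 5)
Your overall architecture matches the paper's in spirit: interpose the interior curve $\Gamma_j = h_j(T,\{|z|=1-\delta\})$, control $d_H(\Gamma_j,\hat\gamma_j)$ by integrating the derivative bound~\eqref{sept26.3} radially, transfer a derivative bound to $h_2$ by Cauchy's estimate, and control the remaining gap by a Gronwall-type continuity estimate for the Loewner flow. The paper does the same things, organized slightly differently (it proves the two Hausdorff inclusions separately rather than passing through $d_H(\Gamma_1,\Gamma_2)$, and it works with the reverse-time radial Loewner equation, which is the cleanest way to get an ODE at a fixed interior $z$).

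The genuine gap is in the paragraph you label ``the main obstacle.'' You correctly compute that Gronwall gives $F(\ee,\delta,T) \le \ee\exp(c_0/\delta^2)$ with $c_0$ depending only on $T$, but then conclude this is ``useless once $T>1/C$'' and go looking for a Rohde--Schramm-style bootstrap. That detour is not needed and is not what the paper does. The error is insisting on $\delta=|\log\ee|^{-1/2}$ \emph{exactly}: with that choice $\exp(c_0/\delta^2)=\ee^{-c_0}$, which indeed blows up for $c_0>1$. But nothing forces this choice. The lemma's conclusion is stated up to a constant $c=c(T,\beta)$, so one is free to take $\delta_0 = (2c_0)^{1/2}|\log\ee|^{-1/2}$ --- a $T$-dependent constant multiple. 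Then $c_0/\delta_0^2 = |\log\ee|/2$, so $F \le \ee\exp(|\log\ee|/2) = \ee^{1/2}$, and Cauchy gives $\sup_{|z|\le 1-\delta_0}|h_1'-h_2'| \le c(\ee|\log\ee|)^{1/2}$. Both are polynomial in $\ee$ and hence vastly smaller than $\delta_0^{1-\beta}\asymp |\log\ee|^{-(1-\beta)/2}$; the extra factor $(2c_0)^{(1-\beta)/2}$ is absorbed into $c(T,\beta)$. This is precisely the paper's argument. (There is a small constants mismatch in the paper's own statement, since~\eqref{sept26.3} is hypothesized only for $\delta<|\log\ee|^{-1/2}$ while the proof uses it up to $\delta_0=(2c_0)^{1/2}|\log\ee|^{-1/2}$; but that is a pre-existing looseness, not something your bootstrap would need to repair.) In short: the estimate you dismissed already suffices once the mesoscopic scale $\delta_0$ is allowed to depend on $T$, and the rest of your outline goes through.
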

  
\begin{proof}
Let us first note that our assumptions imply that for $j=1,2$, $h_j$ extends continuously to the closure of $\mathbb{D}$. Indeed, $\gamma_j[0,T]$ is by assumption a (simple) curve and is therefore locally connected. Since $\partial \mathbb{D}$ is clearly locally connected, and finite unions of locally connected sets are again locally connected, $\partial h_j(\mathbb{D})$ is locally connected. Theorem~2.1 of \cite{pommerenke} then implies the claimed continuity up to the boundary.

For $j=1,2$ and $0 \le t \le T$, let $\tilde{h}_j(t,z;T)$ be the solution to the reverse-time radial Loewner equation driven by $t \mapsto W_j(T-t)$:
\[
\partial_t\tilde{h}_j(t,z;T) = -\tilde{h}_j(t,z;T) \frac{W_j(T-t)+\tilde{h}_j(t,z;T)}{W_j(T-t)-\tilde{h}_j(t,z;T)}, \quad \tilde{h}_j(0,z;T)=z.
\] 
It is well-known that $\tilde{h}_j(T,z;T) = h_j(T,z)=:h_j(z)$ for $z \in \mathbb{D}$.
Let \[H(t)=\tilde{h}_1(t,z;T)-\tilde{h}_2(t,z;T), \quad 0 \le t \le T,\] so that $|H(T)|=|h_1(z)-h_2(z)|$. Then by differentiating $H(t)$ with respect to $t$, using the reverse-time Loewner equation, we obtain a linear ODE which can be solved using the method of integrating factor. Using the fact that points flow away from $\partial \Disk$ under the reverse flow, it is then easy to see that there is a $c_0<\iy$ depending only on $T$ such that
\begin{equation*}%\label{bad}
|h_1(z)-h_2(z)|=|H(T)| \le \ee \left(e^{c_0/\delta^2}-1\right)
\end{equation*}
whenever $|z| \le 1-\delta$. A similar calculation (using the forward-time Loewner equation) may be found in Section~4.7 of~\cite{SLEbook}.

Hence, by taking $\delta = \delta_0:=|4c_0/\log \ee|^{1/2}$, we see that 
\begin{equation}\label{sept26.1}
\sup_{|z| \le 1-\delta_0/2}|h_1(z)-h_2(z)| \le c \ee^{1/2}.
\end{equation}
Using this, Cauchy's integral formula implies 
\begin{equation}\label{sept26.2}
\sup_{|z| \le 1-\delta_0}|h'_1(z)-h'_2(z)| \le \sup_{|z| \le 1-\delta_0} c \int_{\partial \ball(z, \delta_0/2)}\frac{\ee^{1/2} d\zeta}{|z-\zeta|^2}\le c (\ee |\log \ee|)^{1/2}.
\end{equation}
For ease of notation we shall write \[\hat\gamma_j=\gamma_j[0,T]\cup \partial \D, \quad j=1,2,\] 
in the sequel.
Fix $\zeta \in \partial \D$. Then, using \eqref{sept26.3} and the Koebe distortion theorem (Lemma~\ref{Koebedistthm}) we can integrate $h_1'$ up to the boundary to get
\begin{equation*}
|h_1(\zeta)-h_1((1-\delta)\zeta)| \le c\delta_0^{1-\beta}, \quad 0 \le \delta \le \delta_0. 
\end{equation*}
Hence, for some constant $c< \infty$,
\[
h_1( \{1-\delta_0 \le |z| \le 1\}) \subset \bigcup_{z \in \hat\gamma_1}\ball(z, c\delta_0^{1-\beta}),
\]
where $h_1(z)$ when $|z|=1$ is defined by taking a limit using the continuity on $\overline{\mathbb{D}}$.
It follows in view of~\eqref{sept26.1} that, with perhaps a different $c$, we can replace $h_1$ by $h_2$ in the last expression, again using continuity.
Clearly $\hat \gamma_2 \subset h_2( \{ 1-\delta_0 \le |z| \le 1\})$, so it remains to show that
\begin{equation}\label{gamma1gamma2}
\hat\gamma_1 \subset \bigcup_{z \in \hat \gamma_2}\ball(z, c \delta_0^{1-\beta}).
\end{equation}
Write $w=\lim_{\delta \to 0}h_1((1-\delta)\zeta) \in \hat \gamma_1$. Let $\hat{w} \in \hat \gamma_2$ be a point closest to $h_2((1-\delta_0)\zeta)$. Then by Koebe's estimate,~\eqref{sept26.2}, and~\eqref{sept26.3} we have
\begin{align*}
|\hat{w} - h_2((1-\delta_0)\zeta)| & \le c\, \delta_0 | h_2'((1-\delta_0)\zeta)| \\
& \le c \, \delta_0\left(|h_1'((1-\delta_0)\zeta)| + c'(\ee |\log \ee|)^{1/2} \right)\\
& \le c \, \delta_0^{1-\beta}.
\end{align*}
Hence, using~\eqref{sept26.1} and~\eqref{sept26.3},
\begin{align*}
|\hat{w}-w|  & \le  |\hat{w}-h_2((1-\delta_0)\zeta)| + |h_2((1-\delta_0)\zeta)-h_1((1-\delta_0)\zeta)|\\
 &\qquad + |h_1((1-\delta_0)\zeta)-w|\\
& \le  c\left(\delta_0^{1-\beta} + \ee^{1/2} + \delta_0^{1-\beta}  \right),
\end{align*}
and this implies that~\eqref{gamma1gamma2} holds,
which concludes the proof.
\end{proof}
The idea is now to use the coupling the existence of which follows from Theorem~\ref{the.theorem2} and an estimate on the growth of the derivative of the radial SLE$_2$ mapping $f$ to find an event with large probability on which Lemma~\ref{sept26.lemma} can be applied. We will derive the needed estimate from the corresponding estimate for standard \emph{chordal} SLE$_2$ in~\cite{rohde_schramm}.    
%Let $\tilde{\gamma}=\tilde{\gamma}(t)$ be the radial SLE$_2$ path in $\D$ as above with the half-plane capacity parameterization. Recall that the Moebius transformation $\varphi(z)=(i-z)/(i+z)$ maps $\Half$ conformally onto $\D$. 
For this, we need to change to ``chordal coordinates'' and express the derivative of $f$ in terms of a suitable conformal map $F$ defined in $\Half$. We shall then see that $|F'|$ can be estimated in terms of the derivative of the conformal map associated with standard chordal SLE$_2$ (at a fixed half-plane capacity time). 

Let us start by mapping the curve $\tilde{\gamma}$ to $\Half$ via the M\"obius transformation $\vp^{-1}$ defined in the beginning of the section, and note that there exists a unique conformal map $F_t: \Half \to \Half \setminus \varphi^{-1}(\tilde{\gamma}[0,t])$ satisfying the hydrodynamical normalization, that is, 
\[
F_t(z)=z-2t/z + O(1/z^2), \quad z \to \infty,
\] 
where $2t$ is the half-plane capacity of $\varphi^{-1}(\tilde{\gamma}[0,t])$ and we assume that $t \le t_0$. It is known that the mapping $F_t$ has the distribution of a chordal SLE$(2;-4)$ mapping with force point $i$ (evaluated at the fixed time $t$), see \cite{SW}. (As we only need to know that chordal SLE$(2;-4)$ is mutually absolutely continuous with respect to chordal SLE$_2$ for sufficiently small times, we will not further discuss SLE$(\kappa; \rho)$ and refer the reader to~\cite{SW} for definitions and additional details.) This also means that $F_t$ can be viewed as a chordal SLE$_2$ mapping weighted by a certain local martingale in a sense which we now briefly explain. (See \cite{SW} for the proof.) Let $(\hat{G}_t)$ be the standard chordal SLE$_2$ Loewner chain, that is, $\hat{G}_t$ satisfies the chordal (forward time) Loewner equation driven by $\sqrt{2} B_t$, where $B$ is standard Brownian motion. Set $\hat{F}_t=\hat{G}_t^{-1}$. Let $x_t+iy_t=\hat{G}_t(i)-\sqrt{2}B_t$. Then by applying It\^o's formula one can see that $M_t=y_t/(x_t^2+y_t^2)$ is a local martingale with respect to the chordal SLE$_2$ path. Note that $M_0=1$. By considering a stopped version of $M$ (or $M$ evaluated at a sufficiently small deterministic $t_0$, which is the case we consider) we can define a new probability measure $\mathbb{E}[ \one\{\cdot\} M_{t_0}]$ by \emph{weighting} by $M$. Let $U_t$ be the chordal driving function for $F_t$, $0 \le t \le t_0$, that is, $U_t$ is the unique continuous real-valued function such that for $0 \le t \le t_0$,
\[
\partial_t F_t(z) = -\partial_z F_t(z) \frac{2}{z-U_t}, \quad F_0(z)=z, \quad z \in \Half.
\]
By Girsanov's theorem, under the measure obtained by weighting by $M_{t_0}$, a standard linear Brownian motion with speed 2 has the same distribution as the process $U_t$, $0 \le t \le t_0$, under the ``unweighted'' measure.
Since $y_0=1$ the Loewner equation implies that $y_t \ge \sqrt{1-4t}$ if $t <1/4$. Consequently, if $0 \le t \le t_0 < 1/4$ we see that $M_t \le 1/\sqrt{1-4t_0}$ and we can write 
\begin{align}\label{jul9.1}
\Prob(|F_t'(x+iy)| \ge   y^{-\beta})
&=\E[ \one\{|\hat{F}_t'(x+iy)| \ge  y^{-\beta}\} M_{t_0}] \nonumber\\
&\le  \frac{\Prob(|\hat{F}_t'(x+iy)| \ge y^{-\beta})}{\sqrt{1-4t_0}},
\end{align}
where $\beta<1$. The last probability can be estimated using Corollary~3.5 of~\cite{rohde_schramm}.

We now show that the radial SLE$_2$ mapping 
\[
f_t: \Disk \to \Disk \setminus \tilde{\gamma}[0,t], \quad f_t(0)=0, \;\; f_t'(0)>0,
\]  
can be expressed in terms of $F_t$. Indeed, we can write
\begin{equation}\label{feb25.1}
f_t=\varphi \circ F_t \circ \Delta,
\end{equation}
where, for $G_t=F_t^{-1}$,
\[
\Delta(z)=\Delta_t(z)=\frac{z \overline{G_t(i)} -\lambda G_t(i) }{z - \lambda} 
\]
is a random Moebius transformation accounting for the fact that $f_t$ and $F_t$ are normalized at an interior point and at a boundary point, respectively. Note that $\Delta: \Disk \to \Half$ and $\Delta(0)=G_t(i)$, $\Delta(\lambda) = \infty$, where $\lambda=f_t^{-1}(-1)$. Indeed, if $|\lambda|=1$ then the right hand side of~\eqref{feb25.1} maps $\D$ onto $\D \setminus \tilde{\gamma}[0,t]$ and fixes the origin where it has derivative $[iF'(G(i))\imag G(i)]/
\lambda$. Hence we can choose a unique $\lambda$ with $|\lambda| =1$ such that this derivative is real and positive. But this defines $f$ by uniqueness of Riemann mappings. We can then see that $f(\lambda)=-1$.  

The chain rule gives
\begin{equation}\label{jul5.3}
|f_t'(z)| = |\vp'(F_t(\Delta(z))| | F_t'(\Delta(z))| |\Delta'(z)|.
\end{equation}
Note that 
\begin{equation}\label{jul5.4}
|\vp'(F_t(\Delta(z)))||\Delta'(z)|=\frac{2 }{|F_t(\Delta(z))+i|^2} \frac{|2 \lambda \imag G_t(i)|}{|z-\lambda|^2} \le c_0, \quad z\in \Disk, 
\end{equation}
where the inequality comes from performing a series expansion using the hydrodynamical normalization. 

We can now derive the needed estimate for the growth of the derivative of the radial SLE$_2$ mapping $f_t$. For the purpose of stating the lemma, define $\beta_0 = (14 + 4\sqrt{6})/25$ and set
\begin{equation}\label{rho}
\rho(\beta)=\frac{25 \beta^2-28\beta+4}{8 \beta}.
\end{equation}

\begin{lemma}\label{v-lemma}
There exists a subpower function $\phi$ such that for $\beta_0 < \beta <1$ and all sufficiently small $\delta_0 > 0$
\begin{equation*}%\label{jul6.1}
\PP \left(\left\{\sup_{\zeta \in  \partial \D}|f_{t_0}'((1-\delta)\zeta)| \le \delta^{-\beta} \text{ for all } \delta < \delta_0 \right\}^c\right) \le \phi(1/\delta_0)\delta_0^{\rho(\beta)}.
\end{equation*}
\end{lemma}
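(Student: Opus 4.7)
The plan is to use the factorization $f=\vp\circ F\circ\Delta$ from (\ref{feb25.1}) to convert the radial derivative estimate into a chordal one, apply the Rohde-Schramm moment estimate for chordal SLE$_2$ (Corollary~3.5 of~\cite{rohde_schramm}) via the Girsanov-type bound (\ref{jul9.1}) at individual points, and upgrade to a uniform statement by dyadic discretization plus Koebe distortion. The chain rule relations (\ref{jul5.3})--(\ref{jul5.4}) give
$$|f'((1-\delta)\zeta)|\le c_0\,|F'(\Delta((1-\delta)\zeta))|,$$
so everything reduces to bounding $|F'(w)|$ at $w=\Delta((1-\delta)\zeta)=x+iy$. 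A direct computation for the Moebius map $\Delta$ gives $|\Delta'(\zeta)|\asymp|\zeta-\lambda|^{-2}$, and Koebe's estimate yields $y\asymp\delta\,|\Delta'(\zeta)|$; in particular $y$ is comparable to $\delta$ for $\zeta$ bounded away from $\lambda$ and is uniformly bounded above by a constant depending on $t_0$.

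At a single base point, Rohde-Schramm's Corollary~3.5 combined with (\ref{jul9.1}) yields a one-parameter family of moment estimates
$$\E\bigl[|F'(x+iy)|^{\mu}\bigr]\le C(t_0)\,y^{-\nu(\mu)},\qquad 0<y\le 1,$$
valid for $\mu$ in a certain range, with $\nu(\mu)$ explicit at $\kappa=2$. Chebyshev then gives $\Prob(|F'(w)|>\delta^{-\beta})\le C\,y^{-\nu(\mu)}\,\delta^{\mu\beta}$, and inserting the worst case $y\asymp\delta$ produces $\delta^{\mu\beta-\nu(\mu)}$. The explicit (quadratic in $\mu$) form of $\nu$ for $\kappa=2$ makes it a routine calculus problem to optimize $\mu$; accounting for the additional $\delta^{-1}$ factor from the union bound on $\partial\D$ below, the optimized exponent is exactly $\rho(\beta)$ of (\ref{rho}), and the condition $\beta>\beta_0$ is precisely the threshold at which $\rho(\beta)>0$.

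To upgrade to a uniform-in-$(\delta,\zeta)$ estimate, I dyadically discretize: set $\delta_k=2^{-k}\delta_0$ and on each circle $\{|z|=1-\delta_k\}$ place $N_k\asymp\delta_k^{-1}$ equally spaced base points $\zeta_{j,k}$. By Koebe's distortion theorem applied to $f$ on $\ball((1-\delta_k)\zeta_{j,k},c\delta_k)$, pointwise bounds $|f'((1-\delta_k)\zeta_{j,k})|\le\tfrac12\delta_k^{-\beta}$ at all base points upgrade to the uniform bound $|f'((1-\delta)\zeta)|\le\delta^{-\beta}$ for every $\delta\in[\delta_{k+1},\delta_k]$ and $\zeta\in\partial\D$. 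Summing the single-point Chebyshev estimate over the $O(\delta_0^{-2}\log(1/\delta_0))$ base points then gives the stated probability $\phi(1/\delta_0)\,\delta_0^{\rho(\beta)}$, with $\phi$ a subpower function absorbing the logarithmic and union-bound factors.

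The step I expect to be the main technical obstacle is setting up the Rohde-Schramm moment estimate in exactly the right form after the Girsanov change of measure and carrying out the $\mu$-optimization so as to reproduce precisely the algebraic expression (\ref{rho}); the other ingredients --- the chain-rule reduction, the Moebius computation for $\Delta$, the dyadic net, and Koebe distortion --- are then standard. A minor secondary point is handling $\zeta$ near the pole $\lambda$, where $y$ is large rather than comparable to $\delta$; but this is the favorable regime since the Rohde-Schramm bound improves with $y$ and, if necessary, can be replaced on the short arc $\{|\zeta-\lambda|\le\delta^{1/2}\}$ by the trivial Koebe estimate $|f'((1-\delta)\zeta)|\le c/\delta$, which is harmless for $\beta<1$ once $\delta_0$ is small enough.
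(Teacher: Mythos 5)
Your proposal shares the core structure of the paper's proof --- the chain-rule reduction $|f'|\lesssim|F'\circ\Delta|$ via \eqref{jul5.3}--\eqref{jul5.4}, the Girsanov bound \eqref{jul9.1} feeding into Corollary~3.5 of \cite{rohde_schramm}, a dyadic discretization upgraded by Koebe distortion, and a final optimization to produce $\rho(\beta)$ --- but it is missing the one ingredient that makes the union bound actually close: the conditioning on the event $\mathcal{V}=\{\sup_{t\le t_0}|U_t|\le c_1\sqrt{\log\delta_0^{-1}}\}$ together with the Schwarz-reflection argument showing that $|F'(x+iy)|\asymp 1$ whenever $|x|$ lies outside a bounded interval $I$ of length $O(\sqrt{\log\delta_0^{-1}})$. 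This step is what lets the paper restrict the Whitney decomposition to the bounded box $A$ and is not a technicality: without it, the union over $\zeta\in\partial\D$ includes base points whose $\Delta$-image has large real part, and Rohde--Schramm gives no usable tail estimate there.

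Two of your explicit claims are incorrect and point to this gap. First, ``$y$ is uniformly bounded above by a constant depending on $t_0$'' is false: since $\imag\Delta((1-\delta)\zeta)\asymp\delta|\zeta-\lambda|^{-2}$, one has $y\sim1/\delta$ as $\zeta\to\lambda$. Second, and more seriously, your fallback near $\lambda$ --- ``the trivial Koebe estimate $|f'((1-\delta)\zeta)|\le c/\delta$, which is harmless for $\beta<1$'' --- does not prove the required bound: for $\beta<1$ one has $c/\delta>\delta^{-\beta}$, so this estimate does not place you inside the good event. The true reason $|f'|$ is controlled on that arc is that $\lambda$ maps to $-1\in\partial\D$, far from the slit, so $F$ extends by Schwarz reflection across a real interval and is $\asymp$ the identity there; but that argument requires knowing $G(\hat\gamma[0,t_0])\subset I$, which in turn requires the driving-function bound $\mathcal{V}$. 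So the ``minor secondary point'' you flag is in fact exactly where the paper spends most of its effort, and your sketch does not cover it. (There is also a smaller arithmetic slip: the total number of dyadic base points $\sum_k\delta_k^{-1}$ diverges; convergence of the sum must come from the probability decay $\delta_k^{\mu\beta-\nu(\mu)}$ with $\mu\beta-\nu(\mu)>1$, not from a finite count $O(\delta_0^{-2}\log(1/\delta_0))$.)

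The optimization itself is fine in spirit --- the paper's parametrization $\rho_b(\beta)=-1-2b+\beta b(5-2b)$, $b\in[0,3]$, optimized at $b^*=(5\beta-2)/(4\beta)$ does give $\rho(\beta)=(25\beta^2-28\beta+4)/(8\beta)$ with threshold $\beta_0=(14+4\sqrt6)/25$, and a moment/Chebyshev reformulation would reproduce the same number --- so once you add the $\mathcal{V}$-conditioning and replace the Koebe fallback near $\lambda$ with the reflection argument, the proof goes through along the lines you describe.
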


\begin{proof}
Recall the definition of $U_t$ as the chordal driving function for $F_t$. For notational simplicity we will write $f=f_{t_0}$ and $F=F_{t_0}$. Let $c_1$ be fixed for the moment and let $\mathcal{V}=\mathcal{V}_{\delta_0}$ be the event that
\[
\sup_{0 \le t \le t_0}|U_t| \le c_1 \sqrt{ \log \delta_0^{-1}}.
\]
The idea is that on this event the curve in $\mathbb{H}$ is contained in a box with an aspect ratio which is not too large. This means that we can control the change of variables connecting $f$ with $F$ up to a logarithmic correction; the precise estimate is given in \eqref{july8.3} below. Now for the details. 
Recall that $U_t$ has the same distribution as a Brownian motion weighted by the local martingale $M$. Consequently, by our assumption that $t \le t_0$, if $c_1$ is chosen sufficiently large, we can compare with a Brownian motion using the reflection principle to see that 
\begin{equation}\label{jul8.1}
\PP(\mathcal{V}^c)=O(\exp\{-\log \delta_0^{-1}\})=O(\delta_0).
\end{equation}

We claim that there are constants $0< c, c'<\infty$ such that if
\begin{equation}\label{A}
A=A(c)=\left[-c \sqrt{\log \delta_0^{-1}}, c\sqrt{\log \delta_0^{-1}}\, \right] \times (0, c \delta_0 \log \delta_0^{-1}\,],
\end{equation} then 
\begin{align}\label{july8.3}
\PP &\left( \left\{\sup_{\zeta \in  \partial \D}|f'((1-\delta)\zeta)|
\le \delta^{-\beta}, \, \delta < \delta_0 \right\} ^c \cap \mathcal{V} \right) \nonumber\\
&\qquad\qquad\le \PP \left(\exists w \in A: |F'(w)| \ge c'[\imag w \log \delta_0^{-1}]^{-\beta}\right).
\end{align}
To prove~\eqref{july8.3}, write $\hat{\gamma}(t)=\vp^{-1}(\tilde{\gamma}(t))$ for the image of the radial SLE$_2$ path in $\Half$ parameterized by half-plane capacity. Note that the chordal Loewner equation implies that on $\mathcal{V}$
\[
\hat{\gamma}[0,t_0] \subset \left[-c_1 \sqrt{\log \delta_0^{-1}}, c_1 \sqrt{\log \delta_0^{-1}}\,\right] \times [0,2\sqrt{t_0} \,].
\]
Consequently, if $\delta_0$ is sufficiently small, by estimating the harmonic measure of the rectangle $[-c_1 \sqrt{\log \delta_0^{-1}}, c_1 \sqrt{\log \delta_0^{-1}}] \times [0,2\sqrt{t_0}] \subset \mathbb{H}$ we see that there exists $c_2 < \infty$ such that the pre-image of $\hat{\gamma}[0,t_0]$ in $\R$ satisfies
\begin{equation}\label{july8.2}
G(\hat{\gamma}[0,t_0]) \subset \left[-c_2 \sqrt{\log \delta_0^{-1}}, c_2 \sqrt{\log \delta_0^{-1}}\,\right]
\end{equation}
on $\mathcal{V}$. We can assume that $c_2 \ge c_1$.
Define
\[
L:=2c_2 \sqrt{\log \delta_0^{-1}}, \quad I:=[-L,L],
\]
where $c_2$ is as in~\eqref{july8.2}. (Note that on $\mathcal{V}$ we have that $f^{-1}(\tilde{\gamma}[0,t_0]) \subset \Delta^{-1}(I/2) \subset \partial \D$.)
The normalization of $F$ at infinity, Schwarz's reflection principle, and Koebe's distortion theorem (Lemma~\ref{Koebedistthm}) imply that there exists a constant $0 < c<\infty$ such that
\[
c^{-1} \le |F'(x+iy)| \le c , \quad x \in \R \setminus I,
\] 
holds on the event $\mathcal{V}$. Indeed, if $S(z)=(z+1/z)/4$, then since the function $z \mapsto 4F(LS(z))/L$ belongs to the class $\Sigma$ of normalized conformal mappings of the exterior unit disk it has a uniformly bounded derivative for $|z| >2$. (Here we have extended $F$ to $\C \setminus [-L/2, L/2]$ by Schwarz reflection without changing its symbol.) 
Moreover, since $t \le t_0$, the Loewner equation implies that $\imag G(i) \asymp 1$ and that there is a constant $c_3 < \infty$ such that
\[
|\real G(i) | \le c_3.
\]
Further, there is a constant $c_4 >0$ such that
\[
|\Delta^{-1}(x)-\lambda|=\frac{2 \imag G(i)}{|x-\overline{G(i)}|} \ge \frac{c_4}{L}, \quad x \in I.
\]  
Note also that $\imag \Delta(z) = \imag G(i)(2\delta - \delta^2)/|z-\lambda|^2$. Consequently,  we can choose a constant $c <\infty$ for $A=A(c)$ (the rectangle defined in \eqref{A}) such that
\[
\Delta\left(\{(1-\delta)\zeta: 0 \le \delta \le \delta_0,\, \zeta \in \Delta^{-1}(I)\}\right) \subset A 
\]
and so~\eqref{july8.3} follows from~\eqref{jul5.3} and~\eqref{jul5.4}. We assume $A$ is defined using this particular $c$ from now on.

By~\eqref{jul8.1} it remains to estimate the right-hand side of~\eqref{july8.3}. Thus we wish to estimate the probability that there is a point $w$ in the rectangle $A$ such that the derivative $|F'(w)|$ exceeds $(\imag w)^{-\beta}$ (with a logarithmic correction). To this end, we consider a Whitney decomposition of the rectangle $A$, that is, we take a partition using dyadic rectangles of uniformly bounded hyperbolic diameter:
\[
S_{j,k}=\{x+iy: 2^{-j-1} \le y \le 2^{-j}, \,  (k-1)2^{-j} \le x \le k  2^{-j} \}, 
\]
for integer $j \ge \lfloor -\log (c \delta_0 L^2) \rfloor$ and $- L 2^j \le k \le L 2^j$, where $c$ is the constant used in the definition of $A$. Then $A \subset \cup S_{j,k}$ where the union is over the above $j$, $k$.  Let $z_{j,k}$ be the center point of $S_{j,k}$. By the Koebe distortion theorem (Lemma~\ref{Koebedistthm}) $|F'(z)|/|F'(z_{j,k})| \asymp 1$ for $z \in S_{j,k}$. Hence, to estimate the probability of the existence of a $w \in A$ where the derivative of $F$ is large it is enough to restrict attention to the centers $z_{j,k}$ of the Whitney cubes. We will estimate the derivative in each of the $z_{j,k}$ using Chebyshev's inequality and an estimate on the moments of $|F'|$ from~\cite{rohde_schramm}. 

More precisely, the last paragraph, Chebyshev's inequality, and the moment estimate Corollary~3.5 of~\cite{rohde_schramm} show that
\begin{align*}
\PP&\left(\exists w \in A: |F'(w)| > c'(L^2\imag w)^{-\beta}\right) \\
&\qquad\qquad\le \sum_{j=\lfloor  -\log(c \delta_0 L^2) \rfloor}^{\iy} \sum_{k=-L2^j}^{L 2^j} \PP\left(|F'(z_{j,k})| \ge c(2^{j}/L^2)^{\beta}\right) \\
&\qquad\qquad\le \phi(1/\delta_0)\sum_{j=\lfloor  -\log (c\delta_0 L^2) \rfloor}^{\iy} (2^{-j})^{\rho_{b}(\beta)}.
\end{align*}
Here $\rho_{b}(\beta):=-1-2b+\beta b(5-2b)$ with $b \in [0,3]$ (see~\cite{rohde_schramm} for more details) and $\phi$ is a subpower function. When $\rho_b(\beta) >0$ the last term is finite and bounded above by $\phi_1(1/\delta_0)\delta_0^{\rho_b(\beta)}$ for a possibly different subpower function $\phi_1$. We maximize over $b$ to find $\rho(\beta)=(25 \beta^2-28\beta+4)/(8 \beta)$, which is in $(0,1)$ when $ 1>\beta > \beta_0=(14 + 4\sqrt{6})/25$.
\end{proof}

\begin{proof}[Proof of Theorem~\ref{hd.thm}] Let $0<T<\infty$ be fixed for the moment.
Set $\ee_n=n^{-(1/24-\ee)}$ for some fixed $0<\ee<1/24$. By Theorem~\ref{the.theorem2}, if $n$ is sufficiently large, there is a coupling of $\gamma^n$ with Brownian motion $B$ started uniformly on $\partial \D$ and an event $E_n$ with $\PP(E_n^c)<\ee_n$
on which 
for sufficiently large $n$ the estimate
\[
\sup_{0 \le t \le T}|W_n(t)-e^{iB(2t)}| \le \ee_n,
\]
holds, where $W_n$ is the radial Loewner driving function for $\tilde{\gamma}^n$. We extend the coupling to include $\tilde{\gamma}$, the radial SLE$_2$ path, which is obtained deterministically from $B$. 
Let $f: \D \to \D \setminus \tilde{\gamma}[0,t]$ be the radial SLE$_2$ mapping evaluated at a fixed half-plane capacity time $t \le t_0$ as above. We can assume that $T > \ecap(\tilde{\gamma}[0,t_0])$. Set 
\[ \delta_n= |\log \ee_n|^{-1/2},\]
and let $\mathcal{F}_n(\beta)$ be the event that
\[
\sup_{\zeta \in \partial \D}|f'\left((1-\delta)\zeta \right)| \le \delta^{-\beta}, \quad \delta \le \delta_n.
\]
By Lemma~\ref{v-lemma} there exists a subpower function $\phi$ such that
\[
\PP(\mathcal{F}_n(\beta)^c) \le \phi(1/\delta_n) \delta_n^{\rho(\beta)},
\]
where $\beta_0<\beta < 1$ and $\rho$ is given in~\eqref{rho}.

On the event $E_n \cap F_n(\beta)$, which has probability at least $1-\phi(1/\delta_n)\delta_n^{\rho(\beta)}$, we can apply Lemma~\ref{sept26.lemma} to see that 
\[
d_H(\tilde{\gamma}^n[0,t] \cup \partial \D, \, \tilde{\gamma}[0,t] \cup \partial \D) \le c\delta_n^{1-\beta}.
\] 
We solve $\rho(\beta)=1-\beta$ for $\beta_0 < \beta < 1$ to find $\rho(\beta)=(15 - 8 \sqrt{3})/33 \approx 0.035$ and this completes the proof.
\end{proof}

\appendix

%%%%%%%%
%%%%%%%%
\section{An estimate for Green's functions}\label{greenappendix}
%%%%%%%%
%%%%%%%%

In this appendix, we derive a uniform bound for the difference between the discrete and continuous Green's functions in a simply connected grid domain $D\in \grid$ with $\inrad(D) =R$.  This bound, given in Theorem~\ref{main_green_thm}, is an essential ingredient in the proof of Proposition~\ref{KLtheorem}, which is given in Appendix~\ref{prop42appendix}. The bound we provide is, to our knowledge, an improvement over existing bounds of the same kind. In particular, Theorem~\ref{main_green_thm} is an improvement of Theorem~1.2 of~\cite{KozL1} where an error term of $R^{-1/3}\log R$ was obtained by using a rougher decomposition. Throughout this section, $B$ will denote a planar Brownian motion and $S$ will denote a simple random walk on $\Z^2$.  Recall that the continuous Green's function $g_D(x,y)$ and the discrete Green's function $G_D(x,y)$ are defined in Section~\ref{greenZ}.

\begin{theorem}\label{main_green_thm}
Let $0<\ee<1/2$ and let $0<\rho<1$ both be fixed. There exists a
constant $c$ such that the following holds. Suppose that $D\in\grid$
is a grid domain with $\inrad(D)=R>0$, and let $V =V(D)= D \cap
\Z^2$. If $x$, $y \in V$ with $x \neq y$ and $|\psi_D(x)| \le \rho$, then
\begin{equation}\label{EJPmar29.eq1}
\left| G_D(x,y) - \frac{2}{\pi} \, g_D(x,y) - k_{y-x} \right| \le c
R^{-(1/2-\ee)}
\end{equation}
where
$$k_z = k_0 + \frac{2}{\pi}\, \log|z| - a(z),$$
$a(z)$ is the potential kernel as in~\eqref{1.2},  and $k_0 =
(2\varsigma + 3\ln 2)/\pi$ where $\varsigma$ is Euler's constant.
\end{theorem}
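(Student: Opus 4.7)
The plan is to reduce the left side of~\eqref{EJPmar29.eq1} to a difference of expected logarithms and then control that difference by coupling $S$ to $B$. Starting from~\eqref{green1.1} and the Brownian analog $g_D(x,y)=\E^x[\log|B_{T_D}-y|]-\log|y-x|$, and applying the asymptotic~\eqref{1.2} to expand $a(S_{\tau_D}-y)$ as $(2/\pi)\log|S_{\tau_D}-y|+k_0+O(|S_{\tau_D}-y|^{-2})$, the definition of $k_{y-x}$ collapses all the $a(y-x)$ and $\log|y-x|$ terms, leaving
\[
G_D(x,y)-\frac{2}{\pi}g_D(x,y)-k_{y-x}=\frac{2}{\pi}\Big(\E^x[\log|S_{\tau_D}-y|]-\E^x[\log|B_{T_D}-y|]\Big)+E,
\]
with $E=O(\E^x[|S_{\tau_D}-y|^{-2}])$. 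The theorem is thus reduced to bounding (a) the error $E$ and (b) the difference of the two expected logarithms by $O(R^{-(1/2-\ee)})$.

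For (a) I would split $\E^x[|S_{\tau_D}-y|^{-2}]$ dyadically over the annuli $\{2^{k-1}\le |S_{\tau_D}-y|<2^k\}$. The probability of hitting $\bd D$ within distance $2^k$ of $y$ can be controlled by a discrete harmonic-measure estimate, using the discrete Beurling projection bound (Lemma~\ref{beurling3}) when $y$ is near $\bd D$ and a straightforward comparison to the harmonic measure of a disk around $y$ when $y$ is well inside $D$. The hypothesis $|\psi_D(x)|\le \rho$ keeps $x$ at definite distance from $\bd D$, so the harmonic measure seen from $x$ differs from that seen from $0$ only by a bounded multiplicative factor and the dyadic sum remains $O(R^{-(1/2-\ee)})$.

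For (b) I would use the KMT coupling of Lemma~\ref{KMTthm1} together with the two-stopping-time device $\nu_S,\nu_B$ already used in the proof of Proposition~\ref{HM_ROC}. On the resulting event of probability $1-O(R^{-\alpha/2})$, both $S_{\tau_D}$ and $B_{T_D}$ can be separated from $0$ by a crosscut of $D$ of diameter $O(R^\alpha \log R)$; take $\alpha=1/2-\ee'$. On this event a first-order Taylor expansion gives
\[
\big|\log|S_{\tau_D}-y|-\log|B_{T_D}-y|\big|\le \frac{|S_{\tau_D}-B_{T_D}|}{\min(|S_{\tau_D}-y|,|B_{T_D}-y|)},
\]
and averaging against harmonic measure, by the same dyadic argument used for (a), yields a contribution of order $R^{-(1/2-\ee)}$. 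On the complementary bad event we use the deterministic bound $|\log|w-y||\le c\log R$ (valid because $|w-y|\ge R^{-O(1)}$ on any realization, or after truncating an event of further negligible probability using Lemma~\ref{beurling3}), paying $O(R^{-(1/2-\ee')}\log R)$.

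The principal obstacle is the joint singularity when the exit point lands close to $y$: the logarithm blows up there, and simultaneously the coupling error must be integrated against the full, possibly very concentrated, harmonic measure. Removing this obstacle is precisely where the Beurling projection enters with its sharp exponent $1/2$, and it is also what explains the improvement over the $R^{-1/3}\log R$ bound of~\cite{KozL1}: the new estimate only needs to control the Green's function with $x$ away from $\bd D$, so one does not lose a further Koebe factor that was unavoidable in the symmetric two-point estimate, and the dyadic decomposition above recovers the full Beurling $1/2$-power up to the subpolynomial loss $R^{\ee}$.
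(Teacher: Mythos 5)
Your reduction is in the right spirit but it is set up in a way that introduces a difficulty the paper's proof deliberately avoids, and your coupling step is not fine enough to recover the exponent $1/2$.

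First, the anchoring of the logarithm. You expand around the singularity at $y$: you compare $\E^x[\log|S_{\tau_D}-y|]$ with $\E^x[\log|B_{T_D}-y|]$. Since $y$ may be at distance $O(1)$ from $\partial D$, the exit point can land arbitrarily close to $y$ and the log is genuinely singular there; your Taylor bound $|S_{\tau_D}-B_{T_D}|/\min(|S_{\tau_D}-y|,|B_{T_D}-y|)$ degenerates exactly where the harmonic measure is concentrated. The paper instead uses the symmetry $G_D(x,y)=G_D(y,x)$, $g_D(x,y)=g_D(y,x)$ and translates $D$ so that $x$ becomes the origin: since $|\psi_D(x)|\le\rho$, Koebe gives $\dist(x,\partial D)\gtrsim R$, so after translation $|S_{\tau}|,|B_{T}|\ge cR$ on every realization. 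The log is then never singular, the Taylor term is always $\lesssim |S_\tau - B_T|/R$, and the $O(|\cdot|^{-2})$ error from~\eqref{1.2} is simply $O(R^{-2})$ rather than something you must integrate against a concentrated harmonic measure. This is what the paper means by ``only needing $x$ away from $\partial D$''; it is not something your argument uses.

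Second, and independently, the binary good/bad split on the coupling cannot reach $R^{-1/2}$. Your good event has $|S_{\tau_D}-B_{T_D}|\lesssim R^\alpha\log R$ and probability $1-O(R^{-\alpha/2})$ (not $1-O(R^{-\alpha})$; with $\alpha=1/2-\epsilon'$ the complement has probability about $R^{-1/4+\epsilon'/2}$, not $R^{-(1/2-\epsilon')}$ as you wrote). Even after fixing the anchoring issue, pairing the good-event Taylor bound (of order $R^{\alpha-1}\log R$) with the bad-event trivial bound $\log R$ times $R^{-\alpha/2}$ and optimizing over $\alpha$ gives $R^{-1/3}\log R$, which is exactly the pre-existing rate from~\cite{KozL1} that Theorem~\ref{main_green_thm} is designed to improve. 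With your anchoring at $y$ the trade-off is worse still, roughly $R^{-1/4}$: the good event near $y$ costs $(d/R)^{1/2}\log d$ with $d=R^\alpha\log R$, the bad event costs $R^{-\alpha/2}\log R$, and balancing gives $\alpha=1/2$ and rate $R^{-1/4+o(1)}$. The proof of Lemma~\ref{exitestimate.nowlemma} obtains $R^{-(1/2-\epsilon)}$ by decomposing simultaneously over four dyadic scales (the exit distance $|B_T|$, the exit time, the discrepancy $|B_T-S_\tau|$, and the running coupling error) and summing; a single crosscut separation event will not reproduce this. Your dyadic argument in (a) for $\E^x[|S_{\tau_D}-y|^{-2}]$ is essentially correct and gives $O(R^{-1/2})$, but this part becomes trivial ($O(R^{-2})$) once the anchoring is fixed, so it is not where the content lies.
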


In what follows, the symbol $\PP^x$ will refer to the measure of random walk or Brownian motion (or both) started at $x$. There will be no ambiguity as to which process is referred to by $\PP^x$. The symbol $\E^x$ will represent the expected value associated with $\PP^x$. If $x=0$, we just write $\PP$ and $\E$. The proof of Theorem~\ref{main_green_thm} relies on a fine estimate of  
$|\E^x[\log|B_T|] - \E^x[\log|S_{\tau}|]|$
 when $B$ and $S$ are linked by a strong coupling, where $T$ and $\tau$ are the exiting times from $D$ by $B$ and $S$, respectively. The majority of this section is devoted to the derivation of this estimate. Before stating it, we introduce a symbol which we will use throughout the section to alleviate the otherwise cumbersome notation. If for two functions $f$ and $g$ defined on some domain $A$ there exists a constant $c>0$ such that  $f(x)\leq cg(x)$ for all $x\in A$, we will write $f(x) \lesssim g(x)$. It will always be clear from context what the variable and the domain are.

\begin{lemma}\label{exitestimate.nowlemma}
For any $\ee>0$ there exists a constant $c$ such that if $D\in\grid$
is a grid domain with $\inrad(D)=R>0$, $B$ is a planar standard
Brownian motion, and $S$ is a two-dimensional simple random walk,
then for any $x\in\Z^2$ with $|x|\leq R^2$,
$$\big|\,\E^x[\log|B_T|] - \E^x[\log|S_{\tau}|]\,\big|\leq c
R^{-(1/2-\ee)},$$
where $T=\inf\{t\geq 0: B_t\not\in D\}$ and $\tau=\inf\{k\geq
0:S_k\not\in D\}$.
\end{lemma}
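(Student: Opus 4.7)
My plan is to combine a strong coupling with an intermediate stopping time and a martingale-type decomposition. I would apply Lemma~\ref{KMTthm1} to construct on the same probability space a Brownian motion $\tilde B = B/\sqrt{2}$ and a simple random walk $S$, both started from $x$, with $\sup_{t\le\sigma_R}|S_t-\tilde B_t|\le c_0\log R$ on an event $\mathcal{E}_0$ satisfying $\PP(\mathcal{E}_0^c)=O(R^{-10})$. Since $\tilde B$ is only a deterministic time-change of $B$, their exit distributions from $D$ coincide, so $\E^x[\log|B_T|]=\E^x[\log|\tilde B_{\tilde T}|]$ for $\tilde T=\inf\{t:\tilde B_t\notin D\}$. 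By~\eqref{sigma_large} we may also assume $\max(\tilde T,\tau)<\sigma_R$ outside an additional event of probability $O(R^{-7/2})$.

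Next I would introduce the stopping time $\eta=\inf\{t\ge 0:\dist(\tilde B_t,\partial D)\le 2c_0\log R\}$. On the good event $\eta<\min(\tilde T,\tau)$, and since $\inrad(D)=R$ both $|\tilde B_\eta|,|S_\eta|\ge R/2$. Applying the strong Markov property at $\eta$ to the identities $\E^z[\log|\tilde B_{\tilde T}|]=g_D(0,z)+\log|z|$ (for $z\in D\setminus\{0\}$) and $\E^z[a(S_\tau)]=a(z)+G_D(0,z)$ from~\eqref{green1.1}, and using the expansion~\eqref{1.2} of the potential kernel together with $|S_\eta|,|S_\tau|\ge R/2$, gives the decomposition
\begin{equation*}
\E^x[\log|\tilde B_{\tilde T}|]-\E^x[\log|S_\tau|]
=\E^x[\log|\tilde B_\eta|-\log|S_\eta|]
+\E^x[g_D(0,\tilde B_\eta)]
-\tfrac{\pi}{2}\E^x[G_D(0,S_\eta)]
+O(R^{-2}).
\end{equation*}

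I would then bound the three leading pieces separately. The coupling piece is $O(R^{-1}\log R)$ since $|\tilde B_\eta-S_\eta|\le c_0\log R$ and $|\tilde B_\eta|,|S_\eta|\ge R/2$ on the good event. Because $\tilde B_\eta$ lies within $2c_0\log R$ of $\partial D$, Lemma~\ref{DRbeurling} applied to a segment from $\tilde B_\eta$ to its nearest boundary point gives $|\psi_D(\tilde B_\eta)|\ge 1-c(\log R/R)^{1/2}$, hence $g_D(0,\tilde B_\eta)=-\log|\psi_D(\tilde B_\eta)|=O((\log R/R)^{1/2})$. For the discrete piece I would write $G_D(0,S_\eta)=G_D(0,0)\cdot\PP^{S_\eta}(S\text{ visits }0\text{ before }\tau)$; the first factor is $O(\log R)$ from~\eqref{green1.1}--\eqref{1.2}, and the second is $O((\log R/R)^{1/2})$ by a discrete Beurling bound on the hitting probability from $S_\eta$ (established via Lemma~\ref{beurling3} in a translated frame, or via KMT comparison with Lemma~\ref{beurling2}, noting that the walk from $S_\eta$ must traverse distance at least $R/4$ to reach a neighborhood of $0$ while the translated boundary passes within $O(\log R)$ of the new origin). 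Contributions off $\mathcal{E}_0$ are handled by Cauchy--Schwarz against $L^2$-estimates on $\log|\tilde B_{\tilde T}|$ and $\log|S_\tau|$ coming from the Koebe growth theorem (Lemma~\ref{Koebedistthm}) applied to $\psi_D^{-1}$, giving a negligible $O(R^{-5+\ee})$. Summing produces the claimed $O(R^{-(1/2-\ee)})$ bound.

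The main obstacle is the refinement from the $R^{-1/3}$ bound of~\cite{KozL1}. A direct pathwise estimate $|\log|B_T|-\log|S_\tau||\le|B_T-S_\tau|/R$ combined with a crosscut argument of the type in Proposition~\ref{HM_ROC} balances only to $R^{-1/3}$; the gain here comes from comparing \emph{expectations} through the above decomposition, so that each of the two Green's function correction terms is controlled at the independently optimal Beurling scale $\sqrt{\log R/R}$ rather than being forced into a single suboptimal trade-off. The most delicate technical point is making the discrete hitting estimate $\PP^{S_\eta}(S\text{ visits }0\text{ before }\tau)=O((\log R/R)^{1/2})$ rigorous in the present setting of a general simply connected grid domain, whose boundary need not be Jordan and whose lattice vertex set $V(D)$ may have disconnected components.
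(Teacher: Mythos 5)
Your central decomposition rests on an application of the strong Markov property that fails under the KMT coupling. You define $\eta=\inf\{t\ge 0:\dist(\tilde B_t,\partial D)\le 2c_0\log R\}$, which is a stopping time for the filtration generated by $\tilde B$, and then invoke the strong Markov property at $\eta$ for \emph{both} processes to derive the identity
\[
\E^x[\log|\tilde B_{\tilde T}|]-\E^x[\log|S_\tau|]
=\E^x[\log|\tilde B_\eta|-\log|S_\eta|]
+\E^x[g_D(0,\tilde B_\eta)]
-\tfrac{\pi}{2}\E^x[G_D(0,S_\eta)]
+O(R^{-2}).
\]
For $\tilde B$ this is legitimate. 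For $S$ it is not: $\eta$ is not an $S$-stopping time, and $S$ is not a Markov process relative to the joint filtration $\sigma(\tilde B_s, S_s : s \le t)$, precisely because the KMT coupling forces $\tilde B$ and $S$ to track one another, so the $\tilde B$-history carries information about the future of $S$. The identity $\E^x[a(S_\tau)]=\E^x[a(S_\eta)]+\E^x[G_D(0,S_\eta)]$ therefore does not follow. This is exactly the obstruction the paper highlights (``the difficulty lies in finding a way to use the strong Markov property for the two processes which, under the KMT coupling, are not jointly Markov''), and the paper's proof is built entirely around circumventing it: it never applies the strong Markov property to one process at a stopping time of the other, but instead introduces separate stopping times $T_{r,\ell}$ for $B$ and $\tau_{r,\ell}$ for $S$ within a dyadic decomposition over scales in the exit position, the exit time, the coupling discrepancy, and the distance between the two exit points, and then sums.

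Repairing your outline would require separate stopping times $\eta_B$ and $\eta_S$, each used only with its own process, followed by a comparison of $\tilde B_{\eta_B}$ with $S_{\eta_S}$, of $g_D(\tilde B_{\eta_B})$ with $G_D(S_{\eta_S})$, and of the two ``bulk'' terms; controlling the displacement of each process between $\min(\eta_B,\eta_S)$ and $\max(\eta_B,\eta_S)$ is where the real bookkeeping lives and where the paper's scale decomposition earns its keep. The remaining ingredients in your sketch (the Beurling bound giving $g_D(\tilde B_\eta)=O((\log R/R)^{1/2})$, the last-exit decomposition for $G_D$, the $O(R^{-2})$ error from the potential-kernel expansion, and the Cauchy--Schwarz treatment of the rare events) are individually fine, but they do not compensate for the missing justification of the displayed decomposition.
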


\begin{remark*} We believe that the bound $R^{-(1/2-\ee)}$ in Lemma~\ref{exitestimate.nowlemma} can be replaced by $(|x|\vee R)^{-(1/2-\ee)}$ and give a heuristic argument for it in the proof below, but prove the weaker version, as it is all we need.
\end{remark*}

At the heart of the proof lies a coupling argument, the strong approximation of Koml\'os, Major, and Tusn\'ady, which we state here and which we will refer to below as the KMT coupling. For a proof of the one-dimensional case, see~\cite{kmt2} and for the two-dimensional case, see~\cite{auer}. Note that this version of the KMT coupling is for fixed times, while the version stated in Lemma \ref{KMTthm1} is for specific stopping times.

\begin{theorem}\label{kmt}
There exists a coupling of planar Brownian motion $B$ and two-dimensional simple random walk $S$ with $B_0=S_0$, and a constant $c>0$ such that for every $\lambda >0$, every $n\in\R_+$, 
$$\Prob \bigg(\sup_{0\leq t\leq n}|S_{2t}-B_t| > c(\lambda + 1)\log n\bigg)\leq cn^{-\lambda}.$$ 
\end{theorem}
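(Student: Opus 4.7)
The plan is to build the coupling via the classical Hungarian dyadic construction of Komlós--Major--Tusnády in one dimension, and then lift it to two dimensions by a coordinate-decoupling trick following Auer. The guiding principle is that on any dyadic time interval the random walk increment and the corresponding Gaussian increment can be matched pointwise using the quantile (inverse CDF) transformation, and the quality of this matching is controlled by a sufficiently sharp local central limit theorem (LCLT).

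First I would establish the one-dimensional version. Let $S^{(1)}$ be one-dimensional SRW and $B^{(1)}$ a one-dimensional Brownian motion. At the coarsest scale, couple $S^{(1)}_{2n}$ with $B^{(1)}_n$ via the quantile map $S^{(1)}_{2n}=F_n^{-1}(\Phi_n(B^{(1)}_n))$, where $F_n$ and $\Phi_n$ are the CDFs of $S^{(1)}_{2n}$ and $B^{(1)}_n$ respectively. An Edgeworth expansion for $F_n$ with sharp error yields $|S^{(1)}_{2n}-B^{(1)}_n|\leq c(\lambda+1)\log n$ outside an event of probability at most $n^{-\lambda}$, for any $\lambda>0$. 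Conditional on the matched endpoints, I would then iterate the construction on the two halves $[0,n/2]$ and $[n/2,n]$ using the quantile coupling of the conditional (bridge) distributions, which are again close to their Gaussian-bridge counterparts under the same quantitative LCLT. Continuing dyadically down to the unit scale and applying a union bound over the $\log_2 n$ levels gives
\[
\sup_{0\leq t\leq n}|S^{(1)}_{2t}-B^{(1)}_t|\leq c(\lambda+1)\log n
\]
with failure probability $\lesssim n^{-\lambda}$.

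To lift to two dimensions I would use the observation that if $(X_k,Y_k)$ is planar SRW, then the rotated sequences $(X_k+Y_k)$ and $(X_k-Y_k)$ are independent one-dimensional simple random walks on $\Z$: indeed the four planar steps $(\pm 1,0)$ and $(0,\pm 1)$ map bijectively onto the four sign combinations $(\pm 1,\pm 1)$ under this rotation, so the two rotated coordinates evolve independently as $\pm 1$ walks. Applying the 1D coupling to each rotated coordinate independently against two independent 1D Brownian motions, and then rotating back (and rescaling by $1/\sqrt{2}$), produces a planar Brownian motion coupled to the original 2D SRW with the same $c(\lambda+1)\log n$ deviation bound.

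The main obstacle is the precision of the LCLT required at each dyadic scale: to keep the per-scale failure probability at most $n^{-\lambda}$ one needs uniform estimates of the form
\[
\left|\PP(S^{(1)}_{2n}=k)-(\pi n)^{-1/2}e^{-k^2/2n}\right|\leq \frac{C}{n}\,e^{-k^2/2n}
\]
that remain valid in the moderate-deviations regime $|k|\lesssim \sqrt{n\log n}$, together with the analogous bounds for the conditional bridge distributions arising after each refinement. Establishing these sharpened LCLTs by Fourier inversion with careful saddle-point estimates, and then controlling how the quantile-coupling errors propagate through the geometrically growing recursion, is the technically delicate core of the argument; once those ingredients are in place, the rest is bookkeeping via a union bound over the $O(\log n)$ dyadic levels.
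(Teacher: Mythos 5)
The paper does not prove this statement itself; it cites Koml\'os--Major--Tusn\'ady for the one-dimensional strong approximation and Auer for the planar version, and your sketch reconstructs precisely that argument: the dyadic quantile coupling driven by sharp local CLT/Edgeworth estimates for the one-dimensional walk and its bridges, followed by the decoupling of planar simple random walk into the two independent diagonal coordinates $X+Y$ and $X-Y$. The one imprecision worth noting is the ``union bound over the $O(\log n)$ dyadic levels'': the $j$-th level contributes $2^j$ bridge couplings (so on the order of $n$ events in total), and KMT handle the bookkeeping by letting the tolerated deviation and per-event failure probability shrink geometrically down the dyadic tree rather than via a flat per-level bound --- but the architecture you describe is the correct one.
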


In Theorem~\ref{kmt}, $S$ represents random walk interpolated linearly between integer times. For the rest of this section, we use the same notation and in addition run the random walk at twice the Brownian speed. This way, on the probability space of Theorem~\ref{kmt}, it is $B_t$ and $S_t$ that are close, rather than $B_t$ and $S_{2t}$.

A few other technical ingredients will be needed in order to cook up the proof of Lemma~\ref{exitestimate.nowlemma}. Among them are Beurling estimates (see Section~\ref{Sect-notation}) and the following large
deviations estimates giving an upper bound for the probability that in
time $n$ random walk or Brownian motion travel much beyond distance
$\sqrt{n}$ or remain in a disk of radius much smaller than $\sqrt{n}$. For the proofs, see~\cite{benesnotes}.

\begin{lemma}\label{largedev} If $B$ is a planar Brownian motion and $S$ is a
  planar simple random walk, there exists a constant $c<\infty$ such that for every
  $n\geq 0$, every $r\geq 1$,
\begin{equation*}%\label{04-11-05}
\Prob\bigg(\sup_{0\leq t \leq n} |B_t|\geq r\sqrt{n}\bigg) \leq
c\exp\left\{-r^2/2\right\}
\end{equation*}
and
\begin{equation*}%\label{04-11-05b}
\Prob\bigg(\max_{0\leq k \leq 2n} |S_k|\geq r\sqrt{n}\bigg) \leq
c\exp\left\{-r^2/4\right\}.
\end{equation*}
\end{lemma}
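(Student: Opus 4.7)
The plan is to derive both estimates from Doob's maximal inequality applied to a suitable positive exponential (sub)martingale, together with an explicit moment generating function computation. Since the two cases are structurally identical, I would write out the Brownian motion case in detail and indicate the modifications for simple random walk.

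For the Brownian motion bound, Brownian scaling immediately reduces the statement to the case $n=1$. A short Ito computation gives
\[
dY_t \;=\; 2\alpha Y_t\,B_t\cdot dB_t + \bigl(2\alpha+2\alpha^2|B_t|^2\bigr)Y_t\, dt
\]
for $Y_t=\exp(\alpha|B_t|^2)$, so $Y_t$ is a positive submartingale for every $\alpha>0$. Doob's submartingale inequality, combined with the explicit Gaussian integral $\E[\exp(\alpha|B_1|^2)]=(1-2\alpha)^{-1}$ valid for $0<\alpha<1/2$, then yields
\[
\Prob\!\left(\sup_{s\le 1}|B_s|\ge r\right)
\;=\;\Prob\!\left(\sup_{s\le 1}Y_s\ge e^{\alpha r^2}\right)
\;\le\; \frac{e^{-\alpha r^2}}{1-2\alpha}.
\]
Optimizing in $\alpha$ gives a bound of the form $c\,r^2\,e^{-r^2/2}$, which is of the stated form $ce^{-r^2/2}$ after absorbing the polynomial prefactor (at the mild cost of possibly a slightly worse constant); the sharp exponent $1/2$ can be preserved exactly by instead applying Doob's $L^p$ maximal inequality to the Bessel-type process $R_t=|B_t|$ and optimizing over $p$ using the explicit moments $\E[R_1^p]=2^{p/2}\Gamma(1+p/2)$.

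For the simple random walk bound, the analogous strategy uses the exponential martingale $M_k^\lambda=e^{\lambda\cdot S_k}/\psi(\lambda)^k$, where $\psi(\lambda)=\tfrac12(\cosh\lambda_1+\cosh\lambda_2)$ is the single-step moment generating function of planar simple random walk. Since $\log\psi(\lambda)\le |\lambda|^2/4$ for all $\lambda$, Doob's inequality applied to $M^\lambda$ with $|\lambda|\asymp r/\sqrt{n}$, combined with a finite union bound over a net of directions on the unit circle to pass from the linear functional $\lambda\cdot S_k$ to $|S_k|$, yields a bound of the form $c\exp(-r^2/4)$ on $\Prob(\max_{k\le 2n}|S_k|\ge r\sqrt{n})$.

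The main obstacle, given that the lemma asks for the sharp exponents $1/2$ and $1/4$, is avoiding the factor-of-two loss that comes from the naive coordinate-wise decomposition $|S_k|\le|S^{(1)}_k|+|S^{(2)}_k|$ combined with one-dimensional Azuma--Hoeffding. I would circumvent this by working with the radial process and the two-dimensional moment generating function as sketched above, rather than coordinate by coordinate. In any event, the exact exponents are not essential for the application of Lemma~\ref{largedev} in Lemma~\ref{exitestimate.nowlemma}: what is used there is only that both probabilities decay as $\exp(-cr^2)$ for some $c>0$, in order to control contributions from the event that $B$ or $S$ travels atypically far in a given time.
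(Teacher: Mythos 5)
The paper itself does not give a proof of this lemma; it points to \cite{benesnotes}, so there is no in-paper argument to compare against. Your strategy is reasonable and the random walk half works, but the Brownian motion half as you have written it does not deliver the inequality in the stated form. Optimizing $\alpha$ in $e^{-\alpha r^2}/(1-2\alpha)$ gives $\alpha = 1/2 - r^{-2}$ and hence a bound of the form $\frac{e}{2}\,r^2 e^{-r^2/2}$; the prefactor $r^2$ cannot be absorbed into a constant, since $r^2 e^{-r^2/2}/e^{-r^2/2}=r^2$ is unbounded. Your fallback via Doob's $L^p$ maximal inequality for $R_t=|B_t|$ has exactly the same defect: optimizing $p\approx r^2$ in $r^{-p}\bigl(\tfrac{p}{p-1}\bigr)^p\,2^{p/2}\Gamma(1+p/2)$ and applying Stirling still leaves an unabsorbable factor of order $r$ multiplying $e^{-r^2/2}$. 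So neither route you propose yields the first display as stated, although both do give $c_\beta e^{-\beta r^2}$ for any fixed $\beta<1/2$, and you are right that this weaker form is all that the application in Lemma~\ref{exitestimate.nowlemma} actually uses.

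A short argument that produces exactly $c\,e^{-r^2/2}$ is a radial reflection estimate. By Brownian scaling reduce to $n=1$ and set $\sigma=\inf\{t:|B_t|=r\}$. On $\{\sigma\le1\}$, conditionally on $\mathcal{F}_\sigma$, the increment $B_1-B_\sigma$ is a centered planar Gaussian, and the closed disk $\overline{\mathcal{B}(0,r)}$ lies inside the half-plane $\{z:\,z\cdot B_\sigma\le r^2\}$ tangent to $\partial\mathcal{B}(0,r)$ at $B_\sigma$; hence $\Prob(|B_1|\ge r\mid\mathcal{F}_\sigma)\ge\Prob\bigl((B_1-B_\sigma)\cdot B_\sigma\ge0\mid\mathcal{F}_\sigma\bigr)=\tfrac12$. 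Since $|B_1|^2$ is exponential with mean $2$, this gives
$\Prob\bigl(\sup_{t\le1}|B_t|\ge r\bigr)=\Prob(\sigma\le1)\le2\,\Prob(|B_1|\ge r)=2e^{-r^2/2}$, which is the desired bound with $c=2$.

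The random walk part of your sketch is correct as it stands. The single-step bound $\log\psi(\lambda)\le|\lambda|^2/4$ holds: $\tfrac12(\cosh a+1)=\cosh^2(a/2)\le e^{a^2/4}$ gives $\tfrac12\cosh a\le e^{a^2/4}-\tfrac12$, and so $\tfrac12(\cosh a+\cosh b)\le e^{a^2/4}+e^{b^2/4}-1\le e^{a^2/4}e^{b^2/4}$, the last step being $(e^{a^2/4}-1)(e^{b^2/4}-1)\ge0$. Doob's maximal inequality for $M_k^\lambda$ with the optimal $s=b/n$ yields $\Prob(\max_{k\le2n}\theta\cdot S_k\ge b)\le e^{-b^2/(2n)}$ for each fixed unit $\theta$. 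Taking the four axis directions $\theta\in\{\pm e_1,\pm e_2\}$ together with the observation that $|S_k|\ge r\sqrt{n}$ forces $\max_\theta\theta\cdot S_k\ge r\sqrt{n}/\sqrt2$, the union bound gives $4e^{-r^2/4}$, which is the stated second inequality; a finer net of directions would push the exponent toward $1/2$.
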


\begin{lemma}\label{constrained} If $B$ is a planar Brownian motion and $S$ is a
  planar simple random walk, there exists a constant $c>0$ such that for every
  $n\geq 0$, every $r\geq 1$,
\begin{equation*}%\label{04-11-05cons}
\Prob\bigg(\sup_{0\leq t \leq n} |B_t|\leq r^{-1}\sqrt{n}\bigg) \leq \exp\left\{-cr^2\right\}
\end{equation*}
and
$$\Prob\bigg(\max_{0\leq k \leq 2n} |S_k|\leq r^{-1}\sqrt{n}\bigg) \leq \exp\left\{-cr^2\right\}.$$
\end{lemma}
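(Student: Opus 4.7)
The plan is to apply the Markov property at a mesh of $\asymp r^2$ time points, reducing both statements to a product of conditional escape probabilities, each of which is uniformly bounded away from one.

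For the Brownian motion bound, Brownian scaling reduces the problem to $n=1$, so it suffices to show $\PP(\sup_{0\le s\le 1}|B_s|\le r^{-1})\le\exp(-cr^2)$. Set $K=\lceil r^2\rceil$. If $\sup_{0\le s\le 1}|B_s|\le r^{-1}$, then in particular $|B_{k/K}|\le r^{-1}$ for every $k=1,\ldots,K$, so iterating the Markov property at the times $k/K$ yields
\[
\PP\bigl(\sup_{0\le s\le 1}|B_s|\le r^{-1}\bigr)\le\Bigl(\sup_{|x|\le r^{-1}}\PP^x\bigl(|B_{1/K}|\le r^{-1}\bigr)\Bigr)^K.
\]
By Brownian scaling, the inner probability equals $\PP(|y+\tilde B_1|\le K^{1/2}/r)$ for some $|y|\le K^{1/2}/r$. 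For $r\ge 1$ we have $K^{1/2}/r\in[1,\sqrt{2}]$, and since $y\mapsto \PP(|y+\tilde B_1|\le R)$ is maximized at $y=0$ (by radial symmetry and rearrangement of the decreasing radial Gaussian density), this is at most $1-e^{-1}<1$. Hence the full expression is at most $(1-e^{-1})^K\le\exp(-cr^2)$.

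For the simple random walk, the same strategy applies at the times $t_k=k\lfloor 2n/K\rfloor$, $k=1,\ldots,K$, with $K\asymp r^2$ chosen so that each block has length $L=\lfloor 2n/K\rfloor\asymp n/r^2$. It suffices to show
\[
\sup_{|x|\le r^{-1}\sqrt{n}}\PP^x\bigl(|S_L|\le r^{-1}\sqrt{n}\bigr)\le\rho<1,
\]
with $\rho$ independent of $n$ and $r$. For bounded $L$ (equivalently, bounded $n/r^2$), the stated bound $\exp(-cr^2)$ is itself bounded below by a positive constant after shrinking $c$, so the conclusion is trivial. For large $L$, the local central limit theorem for two-dimensional simple random walk yields $\PP(S_L=y)=\frac{1}{\pi L}\exp(-|y|^2/L)+o(L^{-1})$ uniformly in $y$, and summing over $y$ in the target disk of area $\asymp L$ produces a probability that matches the Brownian estimate and in particular is bounded by a constant strictly less than one. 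Alternatively, the random walk estimate can be imported from the Brownian one through the KMT coupling of Theorem~\ref{kmt}: on an event of polynomially small complement, confinement of $S$ to $\{|z|\le r^{-1}\sqrt{n}\}$ forces $B$ into a slightly larger disk, to which the Brownian bound applies.

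The only real subtlety is the lack of exact scaling for the random walk. Either the local-CLT route or the KMT route resolves it, but care must be taken so that the resulting constants remain uniform in $r\ge 1$ and $n\ge 0$; the regime of bounded $n/r^2$, where the CLT is not asymptotic, is handled by absorbing it into the constant $c$.
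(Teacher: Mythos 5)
The paper does not actually prove this lemma; it cites the preprint \cite{benesnotes} for a proof, so there is no in-paper argument to compare against. Your Brownian motion argument is correct and self-contained: the reduction to $n=1$ by scaling, iteration of the Markov property at the $K=\lceil r^2\rceil$ grid times, the rescaling giving $\PP(|y+\tilde B_1|\le K^{1/2}/r)$ with $K^{1/2}/r\in[1,\sqrt{2})$, and the rearrangement observation that $y=0$ maximizes this probability, yielding the factor $1-e^{-1}$, are all sound.

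The random walk part has the right overall structure, but your disposal of the ``bounded $L$'' regime is wrong. You write that for bounded $n/r^2$ the bound $\exp(-cr^2)$ is ``itself bounded below by a positive constant after shrinking $c$, so the conclusion is trivial.'' This is false: $n/r^2$ bounded does not bound $r$, and $\exp(-cr^2)$ tends to zero as $r\to\infty$ for every $c>0$. So the conclusion in that regime is not trivial and still requires an argument. The fix is not hard but does need to be made explicit. Split into two sub-cases. If $n<r^2$ (so $r^{-1}\sqrt n<1$), the event $\{\max_{0\le k\le 2n}|S_k|\le r^{-1}\sqrt n\}$ is impossible for $n\ge 1$ since $|S_1|=1$, so the probability is zero. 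If $r^2\le n\le Cr^2$, the target radius $M=r^{-1}\sqrt n$ lies in $[1,\sqrt C]$; re-run the Markov iteration with blocks of constant length $\ell=\ell(C)$, using the elementary uniform escape estimate $\inf_x\PP^x(\max_{0\le k\le\ell}|S_k-x|>\sqrt C)\ge\delta>0$ (obtained, e.g., by forcing the walk to take $\lceil\sqrt C\rceil+1$ consecutive steps in the same direction). This yields $\PP\le(1-\delta)^{\lfloor 2n/\ell\rfloor}\le\exp(-c'n)\le\exp(-c'r^2)$ since $n\ge r^2$. With that repair, your LCLT/KMT argument for large $L$ (where the LCLT prefactor should actually be $2/(\pi L)$ with a parity restriction, though this does not change the conclusion) completes the random walk bound. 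As a final minor point, for $n=0$ both probabilities are $1$, so the statement as written requires $n\ge 1$; this is a defect of the lemma's phrasing, not of your proof.
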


\begin{proof}[Proof of Lemma~\ref{exitestimate.nowlemma}] 
We assume throughout the proof that $B$ and $S$ are coupled as in Theorem~\ref{kmt}. Recall that in this section, $S$ represents random walk run at twice the usual speed. 
The main difficulty in deriving the optimal estimate lies in the fact that there are a number of different kinds of configurations that can make $\big|\log|B_T| - \log|S_{\tau}|\,\big|$ very large, for instance the rare events of Theorem~\ref{kmt} or the event that $T$ is unusually large. To obtain a precise estimate, we fix $\eps >0$, consider a grid domain $D$ with $\inrad(D)=R>0$, and define
\begin{align*}
\sigma & = \min\{T,\,\tau\},\\
\mathcal{A}_k & =  \big\{|B_T| \in [R^{1+k\eps}, R^{1+(k+1)\eps})\big\}, \;\;\; k\geq 0,\\
\mathcal{B}_{\ell} & =  \big\{\sigma\in [R^{\ell\eps}-1, R^{(\ell+1)\eps}-1)\big\}, \;\;\;\ell \geq 0,\\
\mathcal{C}_m & =  \big\{|B_T-S_{\tau}|\in[R^{m\eps}-1,R^{(m+1)\eps}-1)\big\}, \;\;\;m\geq 0,\\
\mathcal{H}_{r,\ell} & =  \bigg\{\sup_{0\leq t \leq \sigma}|B_t-S_t|\in [cr\log R^{(\ell+1)\eps},c(r+1)\log R^{(\ell+1)\eps})\bigg\}, \;\;\;
r, \ell \geq 0,
\end{align*}
where $c$ is  the constant from Theorem~\ref{kmt}.

Suppose that $d:=\dist(x,\bd D)$, and assume $s=s(\ee, R, d)$ is defined so that $d=R^{s\eps}$. In other words, $s=(\log d)/(\ee \log R)$.
Then, clearly, 
\begin{align}\label{maineq}
\big|\,&\E^x[\log|B_T|]  - \E^x[\log|S_{\tau}|]\,\big| \nonumber \\ 
 & \leq \sum_{k, \ell, m, r\geq 0} \E^x\bigg[\big|\log|B_T| - \log|S_{\tau}|\big|\one\{\mathcal{A}_k\cap \mathcal{B}_{\ell} \cap \mathcal{C}_m \cap \mathcal{H}_{r,\ell}\}\bigg].
\end{align}
 The dominant term in the sum above corresponds essentially to the terms where $T \approx d^2$, $|B_T|  \approx |x|\vee R$, and $|B_T - S_{\tau}|  \approx |x|\vee R$. We now give a rough heuristic argument for why this should be the case.

By the Beurling estimate (see Lemma~\ref{beurling2}, as well as Lemma~\ref{beurling3} for the discrete version), 
the most likely place for $B$ and $S$ to leave $D$ is at some point $z$ with $|z| = O(|x|\vee R)$. Assume that $T<\tau$. Typically, under the KMT coupling, $d(S_T,\partial D) = O(\log R)$. By the Beurling estimate, the distance $S$ travels after $T$ before leaving $D$ will be of order $R^a$ with probability of order $(\log R)^{1/2}R^{-a/2}$. In that case, if $R^a=O(|x|\vee R)$, then 
$$\big|\log |B_T| - \log |S_{\tau}|\,\big| = O\left(\log \left(1+\frac{|B_T-S_{\tau}|}{|B_T|}\right)\right) = O\left(\frac{R^{a}}{|x|\vee R}\right).$$
 On the other hand, if $R^a\geq |x|\vee R$, then $\big|\log |B_T| - \log |S_{\tau}|\,\big| = O(\log R)$ (of course the constant increases with $a$ but this is taken care of easily). The value of $a$ which maximizes the product of  $\big|\log |B_T| - \log |S_{\tau}|\,\big|$ and the corresponding probability is that for which $R^a \approx |x|\vee R$, which corresponds to $|B_T-S_{\tau}|\approx |x|\vee R$. Consequently we expect that
 $$\big|\,\E^x[\log|B_T|]  - \E^x[\log|S_{\tau}|]\,\big|\approx (\log R)^{1/2} (|x|\vee R)^{-1/2}.$$

We now turn to the formal proof of Lemma~\ref{exitestimate.nowlemma}. We begin by estimating $\big|\log|B_T| - \log|S_{\tau}|\big|$, noting that this quantity can be bounded using just the information contained in $\mathcal{A}_k$ and $\mathcal{C}_m$.

If $|B_T - S_{\tau}| < |B_T|$, we can use a Taylor expansion to note that 
\begin{equation*}%\label{log1}
\big|\log|B_T| - \log|S_{\tau}|\,\big|  
= \left|\log\left|1+ \frac{S_{\tau}-B_T}{B_T}\right|\,\right| \lesssim \frac{|B_T-S_{\tau}|}{|B_T|},
\end{equation*}
so that if $m\leq \lfloor k-1+1/\eps\rfloor$, then on $\mathcal{A}_k \cap \mathcal{C}_m$,
\begin{equation}\label{log1bound}
\big|\log|B_T| - \log|S_{\tau}|\, \big|   \lesssim R^{(m+1)\eps-1-k\eps}.
\end{equation}
On the other hand, $|B_T|\leq R^{2\eps}|B_T - S_{\tau}|$  when $m\geq \lfloor k+1/\eps\rfloor$, so by the triangle inequality, $|S_{\tau}|\lesssim R^{2\eps} |B_T - S_{\tau}|$ and another application of the inequality gives 
\begin{equation*}%\label{log2}
\big|\log|B_T| - \log|S_{\tau}|\,\big| \leq \big|\log|B_T|\,\big|+\big|\log|S_{\tau}|\,\big|\lesssim \eps\log R+ \big|\log|B_T -S_{\tau}|\,\big|,
\end{equation*}
so that on $\mathcal{C}_m$, 
\begin{equation}\label{log2bound}
\big|\log|B_T| - \log|S_{\tau}|\,\big| \lesssim m\eps \log R.
\end{equation}

We now focus on the expected value of the indicator function in~\eqref{maineq}. The difficulty lies in finding a way to use the strong Markov property for the two processes which, under the KMT coupling, are not jointly Markov.
(See also the proof of Proposition~\ref{HM_ROC}, where we were faced with the same problem.) We first note that on $\mathcal{B}_{\ell} \cap \mathcal{H}_{r,\ell}$,
$$\sup_{0\leq t\leq R^{\ell\eps}-1}|B_t-S_t|\leq c(r+1)(\ell+1)\eps\log R.$$
We let $c$ be the constant of Theorem~\ref{kmt} and define
\begin{align*}
\xi_{r,\ell} & = \inf\big\{t\geq 0:\min\{d(B_t,\partial D),d(S_t,\partial D)\}\leq c(r+1)(\ell+1)\eps\log R\big\},\\
\mathcal{B}_{i,r,\ell}' & =  \big\{\xi_{r,\ell}\in [R^{i\eps}-1, R^{(i+1)\eps}-1)\big\}, \;\;\;0\leq i\leq \ell,\\
\mathcal{H}_{j,r,\ell}' & = \! \bigg\{\sup_{0\leq t \leq \xi_{r,\ell}}\!\!|B_t-S_t|\in [c j\log R^{(\ell+1)\eps},c(j+1)\log R^{(\ell+1)\eps})\bigg\}, \;0\leq j\leq  r,
\end{align*}
which implies
\begin{equation}\label{rareeventprob}
\PP^x(\mathcal{A}_k\cap \mathcal{B}_{\ell} \cap \mathcal{C}_m \cap \mathcal{H}_{r,\ell}) = \sum_{i=0}^{\ell}\sum_{j=0}^r \PP^x(\mathcal{A}_k\cap \mathcal{B}_{\ell} \cap \mathcal{C}_m \cap \mathcal{H}_{r,\ell}\cap \mathcal{B}_{i,r,\ell}' \cap \mathcal{H}_{j,r,\ell}'). 
\end{equation}
We note that
\begin{align}\label{maxdecomp}
\nonumber
\PP^x&(\mathcal{A}_k \cap \mathcal{B}_{\ell} \cap \mathcal{C}_m \cap \mathcal{H}_{r,\ell}\cap \mathcal{B}_{i,r,\ell}' \cap \mathcal{H}_{j,r,\ell}')\\ \nonumber
& \leq \min\bigg\{\PP^x(\mathcal{B}_{\ell} \cap \mathcal{H}_{r,\ell}), \;\PP^x(\mathcal{B}_{\ell}), \;\PP^x(\mathcal{C}_m | \mathcal{B}_{i,r,\ell}' \cap \mathcal{H}_{j,r,\ell}')\bigg\}\\ \nonumber
& \leq \min\bigg\{\PP^x\bigg(\sup_{0\leq t \leq R^{(\ell+1)\eps}}|B_t-S_t|\geq cr\log R^{(\ell+1)\eps}\bigg), \;\PP^x(\mathcal{B}_{\ell}),\\
&\qquad\qquad\qquad \PP^x(\mathcal{C}_m | \mathcal{B}_{i,r,\ell}' \cap \mathcal{H}_{j,r,\ell}')\bigg\}\nonumber\\ 
& \leq \min\bigg\{cR^{(1-r)(\ell+1)\eps}, \;\PP^x(\mathcal{B}_{\ell}),\; \PP^x(\mathcal{C}_m | \mathcal{B}_{i,r,\ell}' \cap \mathcal{H}_{j,r,\ell}')\bigg\},
\end{align}
where the last inequality follows from Theorem~\ref{kmt}.
Recall $d=\dist(x,\partial D)$ and $s=s(\ee, R, d)=(\log d)/(\ee \log R)$, so that $d=R^{s\ee}$.

If $\ell\geq 2s+3$,  then by Lemmas~\ref{constrained},~\ref{beurling2}, and~\ref{beurling3},
\begin{align*} 
\PP^x( \mathcal{B}_\ell)
& \leq \PP^x\bigg(\sup_{0\leq t \leq R^{\ell\eps}-1}|B_t-x|\leq R^{\ell\eps/2}/\log^2(R^{\ell\eps/2})\bigg)\\
&\hspace{10pt} + \PP^x\bigg(\sup_{0\leq t \leq R^{\ell\eps}}|B_t-x|\geq R^{\ell\eps/2}/\log^2(R^{\ell\eps/2}); \;T\geq R^{\ell\eps}-1\bigg)\\
&\hspace{18pt} + \PP^x\bigg(\sup_{0\leq t \leq R^{\ell\eps}-1}|S_t-x|\leq R^{\ell\eps/2}/\log^2(R^{\ell\eps/2})\bigg)\\
&\hspace{26pt} + \PP^x\bigg(\sup_{0\leq t \leq R^{\ell\eps}}|S_t-x|\geq R^{\ell\eps/2}/\log^2(R^{\ell\eps/2}); \;\tau\geq  R^{\ell\eps}-1\bigg)\\
& \leq 2R^{-c(\ell\eps/2)^2\log R} + \PP^x(B[0,\xi]\cap D^c = \emptyset) + \PP^x(S[0,\Xi]\cap D^c = \emptyset)\\
& \lesssim  R^{-c(\ell\eps/2)^2\log R} + R^{(\ell\eps/2 -s\eps)(-1/2)}\log R^{\ell\eps}\\
& \lesssim R^{s\eps/2-\ell\eps/4}\log R^{\ell\eps},
 \end{align*}
%\begin{align*} 
%\PP^x( \mathcal{B}_\ell)
%& \leq \PP^x\bigg(\sup_{0\leq t \leq R^{\ell\eps}}|B_t-x|\leq R^{\ell\eps/2}/\log^2(R^{\ell\eps/2})\bigg)\\
%&\hspace{15pt} + \PP^x\bigg(\sup_{0\leq t \leq R^{\ell\eps}}|B_t-x|\geq R^{\ell\eps/2}/\log^2(R^{\ell\eps/2}); \;T\in [R^{\ell\eps}, R^{(\ell+1)\eps})\bigg)\\
%& \leq R^{-c(\ell\eps/2)^2\log R} + \PP^x(B[0,\xi]\cap D = \emptyset)\\
%& \lesssim  R^{-c(\ell\eps/2)^2\log R} + R^{(\ell\eps/2 -s\eps)(-1/2)}\log R^{\ell\eps}\\
%& \lesssim R^{s\eps/2-\ell\eps/4}\log R^{\ell\eps},
%\end{align*}
 where $\xi = \inf\big\{t\geq 0:|B_t| \geq R^{\ell\eps/2}/\log^2(R^{\ell\eps/2})\big\}$
and $\Xi = \inf\big\{t\geq 0:|S_t| \geq R^{\ell\eps/2}/\log^2(R^{\ell\eps/2})\big\}$. Note that the second inequality uses the fact that $R^{\ell\eps}-1\geq R^{\ell\eps/2}$, which for any given $\eps$ is true if $R$ is large enough, since this case assumes $\ell\geq 3$. For all other $R$, the inequality holds by making the multiplicative constant sufficiently large.

If $\ell \leq 2s -2$,  then by Lemma~\ref{largedev}, 
\begin{align*} 
\PP^x( \mathcal{B}_{\ell})  &\leq \PP^x(T\leq R^{(\ell+1)\eps}) + \PP^x(\tau\leq R^{(\ell+1)\eps})\\
%& \lesssim \PP^x(T\leq R^{(\ell+1)\eps})\\
&  \leq \PP^x\bigg(\sup_{0\leq t\leq R^{(\ell+1)\eps}}|B_t-x|\geq R^{s\eps}\bigg) + \PP^x\bigg(\sup_{0\leq t\leq R^{(\ell+1)\eps}}|S_t-x|\geq R^{s\eps}\bigg) \\
& \leq \PP\bigg(\sup_{0\leq t\leq R^{(\ell+1)\eps}}|B_t|\geq R^{s\eps-(\ell+1)\eps/2}R^{(\ell+1)\eps/2}\bigg)\\
& \hspace{18pt}+\PP\bigg(\sup_{0\leq t\leq R^{(\ell+1)\eps}}|S_t|\geq R^{s\eps-(\ell+1)\eps/2}R^{(\ell+1)\eps/2}\bigg) \\
&\lesssim \exp\{-R^{2s\eps-(\ell+1)\eps}/4 \},
 \end{align*}
Finally, if $2s - 1 \leq \ell \leq 2s+2 $, we have the trivial bound $\PP^x( \mathcal{B}_{\ell}) \leq 1$.
To summarize, we have
$$%\label{Bl}
\PP^x( \mathcal{B}_\ell) \lesssim
\begin{cases}
\exp\{-R^{2s\eps-(\ell+1)\eps}/4\}, & \ell\leq 2s -2,\\
1, & 2s-1 \leq \ell \leq 2s+2, \\
R^{s\eps/2-\ell\eps/4}\log R^{\ell\eps}, & \ell\geq  2s+3.
\end{cases}
$$
To evaluate $\PP^x(\mathcal{C}_m | \mathcal{B}_{i,r,\ell}' \cap \mathcal{H}_{j,r,\ell}')$, one has to be somewhat careful, as $S$ and $B$ are not jointly Markov under the KMT coupling. However, they are Markov when considered separately. We define 
the following stopping times for $S$ and $B$, respectively:
$$\tau_{r,\ell} = \inf\big\{t\geq 0:d(S_t,\bd D) \leq 2c(r+1)(\ell+1)\eps\log R\big\}$$
and 
$$T_{r,\ell} = \inf\big\{t\geq 0:d(B_t,\bd D) \leq 2c(r+1)(\ell+1)\eps\log R\big\}.$$
Then clearly, on $\mathcal{B}_{\ell} \cap \mathcal{H}_{r,\ell}$, 
$$\max\{\tau_{r,\ell},\; T_{r,\ell}\} \leq \xi_{r,\ell} \leq \min\{T,\tau\}.$$
By the Beurling estimates (see Lemmas~\ref{beurling2} and~\ref{beurling3}), we have
$$\PP^x\big(|S_{\tau_{r,\ell}}-S_t|\geq a \text{ for some }t\in [\tau_{r,\ell},\tau]\big)\lesssim a^{-1/2}((r+1)(\ell+1)\eps\log R)^{1/2}$$
and
$$\PP^x\big(|B_{T_{r,\ell}}-B_t|\geq a \text{ for some }t\in [T_{r,\ell},T]\big)\lesssim a^{-1/2}((r+1)(\ell+1)\eps\log R)^{1/2}.$$
In particular, applying the triangle inequality to 
$$S_{\tau}-B_T = (S_{\tau}-S_{\xi_{r,\ell}})+(S_{\xi_{r,\ell}}-B_{\xi_{r,\ell}})+(B_{\xi_{r,\ell}}-B_T)$$
and noting that given $\mathcal{B}_{i,r,\ell}' \cap \mathcal{H}_{j,r,\ell}'$,
$$|S_{\xi_{r,\ell}}-B_{\xi_{r,\ell}}|\leq c(j+1)(\ell+1)\eps\log R,$$
we see that the Beurling estimates imply that
\begin{align}\label{condprob} \nonumber
\PP^x&(\mathcal{C}_m | \mathcal{B}_{i,r,\ell}' \cap  \mathcal{H}_{j,r,\ell}')\\ \nonumber
& \leq \PP^x\left(\sup_{T_{r,\ell}\leq t\leq T}|B_t-B_{T_{r,\ell}}|\geq \frac{R^{m\eps}-c(j+1)(\ell+1)\eps \log R}{2}\right)\\ \nonumber
& \qquad\qquad+ \PP^x\left(\sup_{\tau_{r,\ell}\leq t\leq \tau}|S_t-S_{\tau_{r,\ell}}|\geq \frac{R^{m\eps}-c(j+1)(\ell+1)\eps \log R}{2}\right)\\ 
& \lesssim R^{-m\eps/2}((j+1)(\ell+1)\eps\log R)^{1/2}.
\end{align}

Putting~\eqref{rareeventprob},~\eqref{maxdecomp},  and~\eqref{condprob} together, we get
\begin{equation}\label{mklr}
\PP^x(\mathcal{A}_k\cap \mathcal{B}_{\ell} \cap \mathcal{C}_m \cap \mathcal{H}_{r,\ell}) \lesssim f(r,\ell,m),
\end{equation}
where 
\begin{align}\label{frlm}
f(r,\ell,&m)  =  (\ell+1) (r+1)  \notag\\ 
 &\cdot \min\big\{cR^{(1-r)(\ell+1)\eps}, \;\PP^x(\mathcal{B}_{\ell}), \;R^{-m\eps/2}((r+1)(\ell+1)\eps\log R)^{1/2}\big\}
\end{align}
and
\begin{equation*}
\PP^x( \mathcal{B}_\ell) \lesssim
\left\{
\begin{array}{ll}
\exp\{-R^{2s\eps-(\ell+1)\eps}/4\}, & \ell\leq 2s -2,\\
1, & 2s-1 \leq \ell \leq 2s+2, \\
R^{s\eps/2-\ell\eps/4}\log R^{\ell\eps}, & \ell\geq  2s+3.
\end{array}
\right.
\end{equation*}

We can now plug~\eqref{mklr},~\eqref{log1bound}, and~\eqref{log2bound} into~\eqref{maineq} to see that 
\begin{align}\label{maineqrevisited}
\big|&\,\E^x[\log|B_T|]  - \E^x[\log|S_{\tau}|]\,\big|\\
&\lesssim \sum_{k, \ell, r\geq 0} \left[ \sum_{m=0}^{\lfloor k-1+1/\eps\rfloor} R^{(m+1)\eps - 1-k\eps} f(r,\ell,m)
 +\!\! \sum_{m \geq \lfloor k+1/\eps\rfloor} \!\!m\eps f(r,\ell,m) \log R\right].\nonumber
\end{align}

Recall $d=\dist(x,\partial D)$ and $s=s(\ee)=(\log d)/(\ee \log R)$ so that $d=R^{s\ee}$.
Due to the complicated nature of the expression in~\eqref{frlm}, the estimation of the sum above is rather tedious. We show here how to find the bound for the dominant terms of the sum, that is, those for which $2s-1 \leq \ell \leq 2s+2$.  For those values of $\ell$ the sum is bounded above by
$$f(r,\ell,m)\lesssim (s+1)(r+1)\min\big\{R^{(2-2r)s\eps},\;1,\;R^{-m\eps/2}((r+1)(\ell+1)\eps\log R)^{1/2}\big\}.$$
Since $r>m/4s+1 \Rightarrow R^{(2-2r)s\eps}<R^{-m\eps/2}((r+1)(\ell+1)\eps\log R)^{1/2}$ (at least for $R$ large enough), the sum of the terms in~\eqref{maineqrevisited} where $2s-1 \leq \ell \leq 2s+2$ is bounded above (up to multiplicative constants) by 
\begin{align*}
&(s+1)\sum_{k\geq 0} \left[  \sum_{m=0}^{\lfloor k-1+1/\eps\rfloor}\right.\\
&\hspace{20pt}\bigg(\sum_{r=0}^{\lfloor m/4s+1\rfloor} (r+1)R^{(m+1)\eps -1-k\eps}R^{-(m/2)\eps}((r+1)(2s+3)\eps\log R)^{1/2}\\
&\hspace{120pt} + \sum_{r\geq \lfloor m/4s+2\rfloor} (r+1)R^{(m+1)\eps - 1 - k\eps}R^{(2-2r)s\eps}\;\bigg)\\
&\hspace{20pt}+ \sum_{m\geq \lfloor k+1/\eps\rfloor}\bigg(\sum_{r=0}^{\lfloor m/4s+1\rfloor} (r+1)m\eps R^{-(m/2)\eps}((r+1)(2s+3)\eps\log R)^{1/2}\log R\\
&\hspace{120pt}   + \left.\sum_{r\geq \lfloor m/4s+2\rfloor} (r+1)m\eps R^{(2-2r)s\eps}\log R\;\bigg)\,\right].
\end{align*}
Since the assumption $|x|\leq R^2$ implies that $d\leq R^3$, we see that
 $$s=\frac{\log d}{\eps\log R}\leq \frac{3}{\eps}.$$ 
Therefore, this multiple sum is bounded (up to a multiplicative constant which now depends on $\eps$) by
$$R^{-1/2+\eps/2}\log^4 R \lesssim R^{-1/2+\eps}.$$
One can estimate the sums of the terms for $\ell \leq 2s-2$ and $\ell \geq 2s+3$ in a
similar manner and find that they are both $o(R^{-(1/2-\eps)})$,
which proves the lemma.
\end{proof}

Having established Lemma~\ref{exitestimate.nowlemma}, we are now able to prove the main theorem.

\begin{proof}[Proof of Theorem~\ref{main_green_thm}]
We will begin with the case that $x=0$, $y \neq 0$.
As noted in Section~\ref{Sect-notation},
\[g_D(y)=\E^{y}[\log|B_{T}|] - \log|y| \;\;\; \text{and} \;\;\; G_D(y)=\E^{y}[a(S_{\tau})] - a(y).
\]
Moreover, as $|y| \to \infty$,
\[
a(y)=\frac{2}{\pi}\log|y| + k_0 + O(|y|^{-2}),
\]
so that
\[
\left| G_D(y) - \frac{2}{\pi} \, g_D(y) - k_y\right| = \frac{2}{\pi}\,\big|\,\E^y[\log|B_{T}|] -\E^y[\log|S_{\tau}|]+O(R^{-2})\,\big|. \]
In order to establish~\eqref{EJPmar29.eq1} in the case that $x=0$, $y \neq 0$, we consider $|y| \le R^2$ and $|y| \ge R^2$ separately.  If $|y| \le R^2$, then Lemma~\ref{exitestimate.nowlemma} applies and~\eqref{EJPmar29.eq1} holds. If $|y| \ge R^2$, then~\eqref{EJPmar29.eq1} follows by virtue of the fact that 
there is a constant $c < \infty$ such that if $D$ is simply connected, $V=V(D)=D \cap \Z^2$, $R=\inrad(D)$, and $|y| \ge R^2$ then
\begin{equation}\label{largebounds}
G_D(y) \le cR^{-1/2} \;\;\; \text{and} \;\;\; g_D(y) \le cR^{-1/2}.
\end{equation}
To establish~\eqref{largebounds}, write $R_y=\inrad_y(D)$, and note that $R_y\le 2|y|$. There is a constant $c_0< \infty$ such that if $R_y/8\le |w-y| \le R_y/4$ then 
$G_D(w,y) \le c_0$. Indeed, from the expression~\eqref{green1.1} for the Green's function we have
\begin{align*}
G_D(w,y) &= \E^w[a(S_{\tau}- y)]-a(y-w) \\
&\le \frac{2}{\pi}\sum_{k \ge 1}(k \log 2 + \log R_y + O(1))\, \PP^w(2^{k-1}R_y \le |S_{\tau}-y| < 2^kR_y)\\
&\qquad-\frac{2}{\pi}\log|y-w| + O(1)\\
&\le \frac{2}{\pi}\log R_y - \frac{2}{\pi}\log|y-w| +O(1) \\
&= O(1),
\end{align*}
where we used a Beurling estimate and that $\inrad_w(D) \le 5 R_y /4$ to bound the sum.
Set 
$$\eta=\min\{j \ge 0: \text{$|S_j-y| \le R_y/4$ or  $S_j \not \in D$}\}.$$
 As a function of $w$, $G_D(w,y)$ is discrete harmonic on $D
 \setminus \{y\}$, and so we see that $G_D(S_{j \land \eta},y)$ is
 a bounded martingale if $|S_0-y| > R_y/4$. Hence, if $c_0$ is as
 above,
\[
G_D(w, y) \le c_0 \, \PP^w(|S_{\eta}-y| \le R_y/4).
\] 
Set $w=0$ and note that $|S_{\eta}-y| \le R_y/4$ implies
that $|S_{\eta}| \ge R^2/2$. Consequently, since $R = \inrad(D)$, a Beurling estimate shows
the existence of a constant $c< \infty$ such that
\[
G_D(y) \le  c\, R^{-1/2}
\]
and by symmetry of the Green's function we are done. The analogue for the continuous Green's function can be proved in a similar fashion (or by using conformal invariance). This establishes~\eqref{largebounds} and shows that~\eqref{EJPmar29.eq1} holds when $|y| \ge R^2$.

To complete the proof of the theorem, we need to consider the case when $x\in V$ with $|\psi_D(x)| \le \rho$ and $y \neq 0$. In this case, if we translate $D$ to make $x$ the origin, then it follows directly from Koebe's estimate that there is a constant $c$ such that  inner radius of the translated set satisfies $\inrad_x(D) \geq cR$, and so the argument for the case when $x=0$, $y\neq 0$ implies that~\eqref{EJPmar29.eq1} holds for any $x\in V$ with $|\psi_D(x)| \le \rho$ as well.
\end{proof}

%%%%%%%%
%%%%%%%%
\section{Proof of Proposition~\ref{KLtheorem}}\label{prop42appendix}
%%%%%%%%
%%%%%%%%

In this appendix we prove Proposition~\ref{KLtheorem}. As noted in Section~\ref{MGthmsectEJP}, the proof requires a modification of a result from~\cite{KozL1} that was proved for simply connected domains with Jordan boundary. The change of setting to UBS domains requires one to establish certain technical estimates that are not immediately obvious.  In this appendix, we go over some of those results and proofs from that paper adapting them to our setting and generalizing them whenever possible. The first step in the proof of Proposition~\ref{KLtheorem} is to establish estimates for the Green's function for simple random walk in $D\cap \Z^2$ in terms of the (continuous) Green's function for $D$. This is where we require Theorem~\ref{main_green_thm}.

The following is an estimate for comparing discrete harmonic measure with continuous harmonic measure and basically says that for a UBS domain if planar Brownian motion has a chance of exiting a domain at a particular boundary arc, then simple random walk also has a chance of exiting the domain at that arc. 
For UBS domains we need to define the association between boundary subsets in a different way than for the Jordan domains used in~\cite{KozL1}. With this change, however, exactly the same proof carries through.

If $A \subset \C$ is any set, we define $\hat A$ to be the set of closed edges of $\Z^2$ that intersect $A$. That is, if  $\edgeset$ denotes the (closed) edge set of $\Z^2$, then 
\begin{equation}\label{edgedefn}
\hat A = \{ e \in \edgeset :  e \cap A \neq \emptyset\}.
\end{equation}
In the following proposition the exiting points should be understood in terms of prime ends and similarly for the harmonic measure.
\begin{proposition}\label{37}
Let $D$ be a UBS domain with $E \subset \bd D$, and let $z \in V(D)$. For all $\delta >0$, there exists a $\epsilon >0$ such that if 
$\omega(z,E,D) > \delta$, then $\PP^z(S_{T_{D_0}} \in \hat E) > \epsilon$ where $\hat E$ is as in~\eqref{edgedefn} and $\omega(z,E,D)$ denotes the continuous harmonic measure of $E$ in $D$ from $z$.
\end{proposition}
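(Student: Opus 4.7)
The plan is to couple simple random walk $S$ and planar Brownian motion $B$ started at $z$ via the KMT strong approximation of Lemma~\ref{KMTthm1}, and then to exploit the fact that the boundary of a UBS domain is a union of line segments of length $2$, so the discrete exit point tracks the Brownian exit point up to logarithmic error. The case of bounded inner radius is handled separately by a routine compactness argument over finitely many UBS shapes; we focus on $R:=\inrad(D)$ large.

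First I would extract from $E$ a well-separated subset $E'\subset E$. Pushing forward by $\psi_D$, the set $I:=\psi_D(E)\subset\partial\D$ carries Poisson mass at least $\delta$ from $\psi_D(z)$. Remove from $I$ a small arc-neighborhood on $\partial\D$ of $\psi_D(\partial D\setminus E)$ together with any arcs whose preimage under $\psi_D^{-1}$ has Euclidean diameter smaller than a threshold $r_0(\delta)\sqrt R$; since at most $O(\delta^{-1})$ macroscopic arcs can collectively carry Poisson mass $\delta/2$ from an interior point, one can choose $r_0(\delta)$ small enough that the remaining arc set $I'$ still satisfies $\omega(\psi_D(z),I',\D)\ge\delta/2$. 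Setting $E'=\psi_D^{-1}(I')$, we have $r_0\sqrt R\gg c_0\log R$ for $R$ large (with $c_0$ the KMT constant), and $E'$ is at Euclidean distance at least $r_0\sqrt R/2$ from $\partial D\setminus E$ by construction.

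The second step is the coupling argument. Let $\mathcal G$ denote the KMT good event, on which $\sup_{0\le t\le\sigma_R}|S_t-B_t|\le c_0\log R$; by Lemma~\ref{beurling2} and its discrete counterpart Lemma~\ref{beurling3}, we may assume that both relevant exit times precede $\sigma_R$ up to a polynomially small failure probability. On $\mathcal G\cap\{B_{T_D}\in E'\}$, which has probability at least $\delta/3$, the position $S_{T_D}$ lies within distance $c_0\log R\ll r_0\sqrt R$ of $B_{T_D}\in E'$, hence inside a Euclidean ball $\ball(B_{T_D},c_0\log R)$ whose distance to $\partial D\setminus E$ exceeds $r_0\sqrt R/3$. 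By the strong Markov property of $S$ at time $T_D$ combined with the discrete Beurling estimate, $S$ then exits $D$ without leaving $\ball(B_{T_D},r_0\sqrt R/4)$ with probability at least $1-O\bigl((\log R/(r_0\sqrt R))^{1/2}\bigr)$. The exit edge is then incident to a vertex on $\partial D\cap E'$, so it belongs to $\hat E'\subset\hat E$: here the UBS structure is essential, since the boundary of $D$ inside this small ball consists of lattice edges on big-square sides, and no discrete pathology prevents the exit edge from lying in $\hat E'$.

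The main obstacle will be Step~1: producing the subset $E'$ with pre-image scale $\gg\log R$ and controlled Poisson-mass loss, uniformly in the UBS domain $D$ and the position of $z$. Conformal invariance reduces the Poisson-mass bookkeeping to $\partial\D$, where it is elementary. The lower bound $\diam\psi_D^{-1}(J)\gtrsim\sqrt R$ for a suitable arc $J\subset\partial\D$ follows from Lemma~\ref{DRbeurling} together with a Koebe rescaling; the delicate case $|\psi_D(z)|$ close to $1$ is treated by first rescaling around $z$ using the Koebe one-quarter theorem, so that after rescaling the reference point sits at bounded conformal distance from the unit circle and the extraction of $E'$ can be carried out there.
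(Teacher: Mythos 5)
The paper itself gives no proof of this proposition — it defers to~\cite{KozL1} and asserts that "exactly the same proof carries through'' for UBS domains — so there is nothing to compare against structurally; the KMT-coupling strategy you use is indeed the one underlying~\cite{KozL1}. But your Step~1 has a genuine gap. You claim that deleting from $I=\psi_D(E)$ those arcs whose $\psi_D^{-1}$-preimage has diameter $<r_0\sqrt R$ costs at most $\delta/2$ of harmonic mass because "at most $O(\delta^{-1})$ macroscopic arcs can collectively carry Poisson mass $\delta/2$.'' That bookkeeping does not hold: the Poisson kernel from $\psi_D(z)$ is unbounded as $|\psi_D(z)|\to1$, so a single component of $I$ of arbitrarily small length (and preimage diameter $O(1)$) can carry almost all the mass; and even for $z$ far from $\partial D$, $E$ may consist of many tiny components (take $E$ to be the open interiors of alternate boundary edges, so every point of $E$ is within distance $1$ of $\partial D\setminus E$): then \emph{no} $E'\subset E$ of mass $\ge\delta/2$ is separated from $\partial D\setminus E$ at any scale $\gg1$. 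In that example the proposition is nonetheless true, because $\hat E$ is the full edge set — which shows the correct mechanism is not "find a well-separated $E'$'', and your Step~1 targets the wrong object. Your proposed fix for $|\psi_D(z)|$ near $1$ (normalize the conformal map at $z$) does not close the gap, since the relevant inner radius then becomes $\inrad_z(D)$, which can be as small as $1$; the separation scale $\gg\log R$ that the KMT coupling requires is simply not available. Note also that the proposition does not assume $0\in D$ — in its applications $D=U_1,U_2$ are small neighborhoods of $x$ — so the quantity $R=\inrad(D)$ you normalize by is not even intrinsic to the statement.

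There are two further problems. The phrase "by the strong Markov property of $S$ at time $T_D$'' is not legitimate: $T_D$ is a stopping time for $B$, not for $S$, and under the KMT coupling the pair $(S,B)$ is not jointly Markov. The paper makes exactly this point in the proof of Proposition~\ref{HM_ROC} and works around it by introducing separate stopping times $\nu_S$ and $\nu_B$; your Step~2 needs the same device. Finally, the "routine compactness over finitely many UBS shapes'' for bounded inner radius is not routine: UBS domains with $\inrad_z(D)\le R_0$ form an infinite family (they can extend arbitrarily far from $z$), and the walk from $z$ may visit distant parts of the boundary, so there is no finite list of shapes to check and one cannot reduce to a finite verification. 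As written, the proof does not establish the uniform $\epsilon=\epsilon(\delta)$ that the proposition requires.
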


The next step is to establish several technical lemmas.  If $E\subset \C$ is any set, we define the \emph{UBS covering of} $E$ by
$$\ubs(E) = \bigcup_{\{x\in \Z^2:\bs{x}\cap E\neq \emptyset\}} \bs{x}$$
where
\begin{equation}\label{jan13.1}
\bs{z} = \{w\in\C:|\real(w)-\real(z)|<1,\, |\imag(w)-\imag(z)|<1\}
\end{equation}
as in Section~\ref{MGsect}. If $E \subset D$, where $D$ is a UBS domain, we define $\mathcal{U}_D(E)$, the UBS covering of $E$ with respect to $D$, by restricting the union in~\eqref{jan13.1} to those squares contained in $D$.

Furthermore, for $x\in D$, where $D$ is any simply connected domain, let $d_D(x) = 1-|\psi_D(x)| = \dist(\psi_D(x),\bd \Disk)$.

\begin{lemma}\label{easyestimates}
There exists a constant $c < \infty$ such that if $D$ is a simply connected UBS domain, $x,w \in \Z^2 \cap D$, and $z$, $z'\in\bs{x}$, then
\begin{itemize} 
\item[{\rm (i)}]{$d_D(z) \leq c d_D(x)$, and}
\item[{\rm (ii)}]{$|\psi_D(z)-\psi_D(w)|\leq c |\psi_D(z')-\psi_D(w)|$ for $w \notin  \overline{\bs{x}}$.}
\end{itemize}
\end{lemma}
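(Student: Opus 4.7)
The plan is to derive both (i) and (ii) from Koebe's distortion, quarter, and estimate theorems (Lemma~\ref{Koebedistthm}) applied to $\psi_D$, exploiting the key geometric input that $\bs{x}\subset D$ when $D$ is a UBS domain and $x\in V(D)$, so that $\dist(x,\partial D)\ge 1$ and $\psi_D$ is univalent on the inscribed disk $\ball(x,1)\subset D$.

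For (i), I would first use Koebe's 1/4 theorem to bound $|\psi_D'(x)|\le 4\, d_D(x)/\dist(x,\partial D)\le 4\, d_D(x)$, and then Koebe distortion on $\ball(x,1)$ with $r\le 1/2$ to obtain $|\psi_D(z)-\psi_D(x)|\le C\,|\psi_D'(x)|\le C'\, d_D(x)$ for every $z\in \ball(x,1/2)$. This yields $d_D(z)\le d_D(x)+|\psi_D(z)-\psi_D(x)|\le (C'+1)\, d_D(x)$. For $z$ in one of the four ``corner regions'' of $\bs{x}\setminus\ball(x,1/2)$, I would perform a case analysis on the nearest lattice point $y\in\overline{\bs{x}}$: if $y\in V(D)$, the same argument applied from $y$ gives $d_D(z)\le c\, d_D(y)$, and a short chain of Koebe distortions along the segment $[x,y]\subset\bs{x}$ compares $d_D(y)$ with $d_D(x)$; otherwise $y\in\partial D$, so $\dist(z,\partial D)\le\sqrt{2}/2$ and the Koebe estimate $d_D(z)\le 4\dist(z,\partial D)|\psi_D'(z)|$, combined with a chain of Koebe distortions bounding $|\psi_D'(z)|$ in terms of $|\psi_D'(x)|$ along a short path in $\bs{x}$, again gives $d_D(z)=O(d_D(x))$.

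For (ii), I would use (i) and Koebe distortion to obtain the diameter bound $|\psi_D(z)-\psi_D(z')|\le C_1\, d_D(x)$ for $z,z'\in\bs{x}$. Since $w\ne x$ forces $|w-x|\ge 1$, Koebe's lower estimate applied at $x$ (with a chaining step when $w\notin\ball(x,\dist(x,\partial D))$) gives $|\psi_D(w)-\psi_D(x)|\ge c_2|\psi_D'(x)|\cdot|w-x|\ge c_2\, d_D(x)$, and combining this with the upper bound on $|\psi_D(z')-\psi_D(x)|$ (shrinking $C_1$ further via Koebe distortion on a smaller disk if needed) yields $|\psi_D(z')-\psi_D(w)|\ge c_3\, d_D(x)$. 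The triangle inequality $|\psi_D(z)-\psi_D(w)|\le|\psi_D(z)-\psi_D(z')|+|\psi_D(z')-\psi_D(w)|\le(C_1/c_3+1)|\psi_D(z')-\psi_D(w)|$ then completes the argument.

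The hard part will be the edge case in (i) where $\dist(x,\partial D)=1$ exactly, so the inscribed disk $\ball(x,1)$ fails to cover the four corner regions of $\bs{x}$ and one cannot apply Koebe distortion centered at $x$ directly. A careful Koebe chaining argument (or transfer to an adjacent lattice point in $V(D)$) is required there to maintain a \emph{universal} constant $c$ in the final bound, and this is the real technical content of the lemma.
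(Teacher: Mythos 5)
There is a genuine gap in the argument for (i), and it is exactly where you flag the difficulty but then propose the wrong fix. You want to chain Koebe distortions along a path in $\bs{x}$ from $x$ to $z$ to show $|\psi_D'(z)|\lesssim|\psi_D'(x)|$ (or equivalently to control $d_D(z)$ through the Koebe estimate $d_D(z)\asymp\dist(z,\partial D)\,|\psi_D'(z)|$). But that bound is simply false: if $p\in\partial D$ is a concave corner of the UBS boundary ($3\pi/2$ interior angle) adjacent to $\bs{x}$ and $z\to p$, then $|\psi_D'(z)|$ blows up like $\dist(z,p)^{-1/3}$, while $|\psi_D'(x)|$ stays fixed. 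What \emph{is} true and what one needs is that $\dist(z,\partial D)\,|\psi_D'(z)|\lesssim|\psi_D'(x)|$, and a Koebe chain cannot deliver this either, since after $\sim\log(1/\dist(z,\partial D))$ chaining steps the multiplicative constants compound to a \emph{polynomial} in $\dist(z,\partial D)^{-1}$ with exponent larger than one, so the product still diverges. The paper closes this gap by exploiting the UBS structure in a way Koebe alone never sees: it factors $\psi=h\circ\vp$ through the local subdomain $D_x$ (the component of $\ball(x,10)\cap D$ containing $\bs{x}$), with $\vp:D_x\to\D$, $h:\D\to\D$; it controls $|\vp'(z)|\lesssim\dist(z,\partial D)^{-1/2}$ via a Beurling estimate; and—crucially—it uses \emph{Schwarz reflection} of $h$ across the arc of $\partial\D$ corresponding to $\partial D\cap\partial D_x$ (available precisely because the UBS boundary is a union of line segments, so $h$ maps an arc of $\partial\D$ into $\partial\D$) to get the uniform bound $|h'(z')|\asymp|h'(0)|$. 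Without reflection there is no universal constant.

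Your argument for (ii) inherits the same difficulty and has an additional mix-up. The bounds you aim for, $|\psi_D(z)-\psi_D(z')|\le C_1\,d_D(x)$ and $|\psi_D(z')-\psi_D(w)|\ge c_3\,d_D(x)$, should be stated in terms of $|\psi_D'(x)|$ rather than $d_D(x)$: when $x$ is deep inside $D$, $d_D(x)\asymp 1$ but $|\psi_D'(x)|\asymp 1/\dist(x,\partial D)$ can be tiny, so the claimed lower bound $|\psi_D(w)-\psi_D(x)|\ge c_2\,d_D(x)$ fails for a lattice neighbour $w$ of $x$ even in the easy case. The natural pair of estimates is $|\psi_D(z)-\psi_D(z')|\lesssim|\psi_D'(x)|$ (diameter of $\psi_D(\bs{x})$) and $|\psi_D(z')-\psi_D(w)|\gtrsim|\psi_D'(x)|$ (Koebe one-quarter: $\psi_D(w)\notin\psi_D(\ball(x,1))\supset\ball(\psi_D(x),|\psi_D'(x)|/4)$ for $|w-x|\ge 1$, together with the diameter bound); that gives the ratio. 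But establishing the diameter bound for $\bs{x}$ touching the boundary again requires the $h\circ\vp$ factorization with Schwarz reflection, which is what the paper does, reducing to a monotonicity step via a nearby point $u$ at distance $5$ from $x$.

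In short, you correctly identified the corners of $\bs{x}$ outside $\ball(x,1)$ as the crux, but the proposed Koebe chaining cannot produce a universal constant there; the missing ingredient is Schwarz reflection across the UBS boundary (plus a Beurling estimate), which is where the UBS hypothesis genuinely enters the proof.
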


\begin{proof}
We write $\psi=\psi_D$ and prove (i) first. All constants are considered universal unless otherwise specified. If
$d=\dist(z, \partial D)$, recall that Koebe's estimate implies
\begin{equation}\label{sept29.1}
d_D(z) \asymp d |\psi'(z)|
\end{equation}
where $\asymp$ means that each side is bounded by a constant times the other.
The result follows easily from the Koebe distortion theorem and~\eqref{sept29.1} if $\bs{x}$ is away from the boundary of $D$. (That is, if $\partial \bs{x} \cap \partial D = \emptyset$.) Hence, we may assume that $\bs{x}$ is adjacent to $\partial
D$. It is enough by \eqref{sept29.1} to prove the existence of a constant $c < \iy$ such
that
\[
d \frac{|\psi'(z)|}{|\psi'(x)|} \le c.
\]
Choose $1 < r_1 < r_2 < 3$. Write $S_1=\{z: |\text{Re}\, z - \text{Re}\, x| \le r_1, \,   |\text{Im}\, z - \text{Im}\, x| \le r_1\}$ and $S_2=\{z: |\text{Re}\, z - \text{Re}\, x| \le r_2, \,   |\text{Im}\, z - \text{Im}\, x| \le r_2\}$ for the two squares centered at $x$ with side length $2r_1$ and $2r_2$, respectively. For $j=1,2$ let $D_j$ be the (simply connected) component of $S_j \cap D$ containing $x$. Note that for $r_1$ and $r_2$ fixed, there are only finitely many possible configurations of $D_1$ and $D_2$ and the inner radii (from $x$) and diameters of $D_1, D_2$ are obviously bounded away from $0$ and from above, respectively. Moreover, the boundaries of $D_1,D_2$ are smooth except at (at most) finitely many points.  
%Let $D_x$ be the (simply connected) component of $x$ of the UBS covering of the ball $\mathcal{B}(x,20)$ with respect to $D$. Then $D_x$ is a UBS domain which contains $S(x)$ and all its neighboring squares in $D$. Clearly there are only finitely many such domains $D_x$ (up to translation). 
Let $\vp$ map $D_2$ conformally onto $\D$ with $\vp(x)=0$, $\vp'(x) > 0$. Next, define
\[
h=\psi \circ \vp^{-1}:\D \to \D.\] 
(Roughly speaking, the idea is to split the behavior of $\psi$ into a local part $\vp$ which depends only on the (simple) local $O(1)$ structure of $D$ around $x$ and a global part $h$ which essentially just scales $\vp(D_1)$.) 

Note that there is a constant $\ee_1>0$ depending only on $r_2-r_1$ such that the probability that a Brownian motion started from $x$ exits $\partial D$ at a segment connecting $\partial D_1$ with $\partial D_2$ is at least $\ee_1$. (This uses the specific simple structure of $D_1,D_2$ as discussed above.) Hence, there is an $\ee>0$ (depending only on $r_2-r_1$) such that for any point $\alpha \in \vp(\overline{D_1}) \subset \mathbb{D}$ the set $\ball(\alpha, \ee) \cap \partial \mathbb{D}$ is either empty or is contained in $h^{-1}(\partial \mathbb{D})$. It follows that the Schwarz reflection principle can be applied to extend $h$ to a conformal map on $\ball(\alpha, \ee)$ for each such $\alpha$. Consequently, using Schwarz reflection and Koebe's distortion theorem, for every $\alpha \in \vp(D_1)$ we can compare derivatives along a chain of disks of radius $\ee$ to see that $|h'(\alpha)| \asymp |h'(0)|$ where the implicit constants depend only on $r_2-r_1$.

Set $\alpha=\vp(z) \in \vp(D_1)$. We have
\[
d \frac{|\psi'(z)|}{|\psi'(x)|} = d \frac{|h'(\alpha)| |\vp'(z)|}{|h'(0)|
|\vp'(x)|}.
\]
Note that $|\vp'(x)|$ is uniformly bounded away from $0$ since the inner radius from $x$ of $D_2$ at least $1$. Moreover, the Beurling estimate implies that $|\vp'(z)| \le c d^{-1/2}$ for some constant $c < \infty$. Since we already know that\[
\frac{|h'(\alpha)|}{|h'(0)|} \le c
\] for a constant $c < \infty$, (i) follows. 

To prove (ii), let $z,z' \in \mathcal{S}(x)$ and let $w \notin \overline{\bs{x}}$ be the center of a big square. We will keep the notation from the proof of (i); recall in particular the definitions of $D_1, D_2, \vp$, and $h$. By the triangle inequality,
\[
|\psi(z)-\psi(w)| \le |\psi(z)-\psi(z')| + |\psi(z')-\psi(w)|.  
\]
Let $\ell$ be the line segment connecting $\psi(z')$ with $\psi(w)$ in $\mathbb{D}$. Since $\partial D_1$ separates $w$ from $\bs{x}$ in $D$, there is a point $\xi \in \ell \cap \psi(\partial D_1)$ closest to $\psi(z')$. Let $u = \psi^{-1}(\xi)$. Since by construction $|\psi(z')-\psi(u)| \le |\psi(z')-\psi(w)|$ it is enough to show that there is a constant $c< \infty$ such that
\begin{equation}\label{nov24.1}
|\psi(z)-\psi(z')| \le c |\psi(z')-\psi(u)|.
\end{equation} 
We proceed to prove \eqref{nov24.1}. Write
\[
\alpha = \vp(z), \quad \beta=\vp(z'), \quad \gamma = \vp(u).
\]
Using again the specific simple structure of $D_1, D_2$ as discussed above, there is a $c_0 > 0$ depending only on $r_2-r_1$ such that $\dist(\vp(\mathcal{S}(x)), \gamma) > c_0$. Consequently,
\begin{equation}\label{nov24.2}
|\alpha -\beta| \le c_0^{-1} |\beta-\gamma|.
\end{equation}
Using Schwarz reflection to compare derivatives, as in the proof of (i), combined with \eqref{nov24.2}, we have that
\[
|h(\alpha) - h(\beta)| \le c |h'(\beta)||\alpha-\beta| \le c_1 |h'(\beta)||\beta - \gamma|. 
\] 
On the other hand, let $R= |h(\gamma) - h(\beta)|$ and consider the line segment $\nu(t) = h(\beta) + t(h(\gamma) - h(\beta))/R, t \in [0, R]$. Let $\Gamma(t) = h^{-1}(\nu(t))$ so that $h(\Gamma(t)) = \nu(t)$ and so $h'(\Gamma(t))\Gamma'(t) = \nu'(t)=e^{i \arg (h(\gamma)-h(\beta))}, \, t \in [0,R]$. Consequently, there is a constant $c >0$ such that
\begin{align*}
|h(\gamma)-h(\beta)|  = \left|\int_0^R h'(\Gamma(t)) \Gamma'(t) \, dt \right| 
& =  \int_0^R |h'(\Gamma(t))| |\Gamma'(t)| \, dt  \\
& \ge  c |h'(\beta)| |\Gamma| \\
& \ge c |h'(\beta)||\gamma - \beta|.
\end{align*}
Here we again compared derivatives using Schwarz reflection and Koebe's distortion theorem as before. Since $h(\alpha)=\psi(z), h(\beta)=\psi(z')$, and $h(\gamma)=\psi(u)$, the proof is complete.
\end{proof}

The following lemma shows that there is a uniform lower bound on the probability that random walk leaves the pre-image of a family of polar rectangles at each of its four sides. The centers of these polar rectangles can vary but the ratio between the angular and radial lengths is constant. It is the analogue of Lemma~3.12 in~\cite{KozL1}.

\begin{lemma}\label{app.lemma1}
Suppose $D$ is a simply connected UBS domain and that $x\in D$. There is a constant $a_0$ such that if $d_D(x) \leq a_0$, then there exist an $\eps >0$ and constants $a > 1$, $b_1< \infty$, and $b_2<\iy$ such that if
\[
\sigma \!=\! \min\{j\geq 0:S_j\not\in D \,\text{or}\; d_D(S_j)\geq ad_D(x) \text{ or } |\theta_D(S_j)-\theta_D(x)|\geq b_1d_D(x)\},
\]
then 
\begin{itemize}
\item[{\rm (i)}] $\Prob^x(S_{\sigma}\not\in D)\geq \eps$,
\item[{\rm (ii)}] $\Prob^x(d_D(S_{\sigma})\geq a d_D(x))\geq \eps$, and
\item[{\rm (iii)}] $\Prob^x(|\theta_D(S_{\sigma})-\theta_D(x)|\leq b_2 d_D(x)\big|S_{\sigma} \in D) = 1$.
\end{itemize}
\end{lemma}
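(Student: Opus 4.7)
The plan is to transfer the problem to polar rectangles in $\Disk$ via the conformal map $\psi_D$ and use the conformal invariance of planar Brownian motion, combined with a KMT coupling to pass from Brownian motion to simple random walk. Write $\psi = \psi_D$, $\delta = d_D(x)$. For constants $a > 1$ and $b_1$ to be chosen, consider the polar rectangle
\[
P = \bigl\{ r e^{i\theta} : 1 - a\delta < r < 1,\ |\theta - \theta_D(x)| < b_1 \delta \bigr\} \subset \Disk,
\]
and its preimage $U := \psi^{-1}(P) \cap D$. The three parts of the lemma correspond to: positive probability that $S$ exits $U$ through $\partial D$ for (i); positive probability of exiting through the inner radial arc $\psi^{-1}(\{|z| = 1 - a\delta\})$ for (ii); and an overshoot bound on the angular part of $\partial U$ for (iii).

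For (iii), I would exploit the UBS property: every lattice point $y \in V(D)$ satisfies $\dist(y, \partial D) \ge 1$ since $\bs{y} \subset D$. Koebe's estimate then gives $|\psi'(y)| \le c\, d_D(y)$, and the Koebe distortion theorem (Lemma~\ref{Koebedistthm}) extends this bound, up to a universal factor, to the segment $[y, y']$ for any neighbor $y'$ of $y$ in $V(D)$. Whenever $S_\sigma \in D$, the step from $S_{\sigma - 1}$ to $S_\sigma$ is such a segment, so by the mean value theorem and $d_D(S_{\sigma - 1}) < a\delta$ we obtain
\[
|\psi(S_\sigma) - \psi(S_{\sigma - 1})| \le c\, |\psi'(S_{\sigma - 1})| \le c'\, a\delta.
\]
Since $|\psi(S_{\sigma - 1})| \ge 1/2$ when $\delta$ is small, this implies $|\theta_D(S_\sigma) - \theta_D(S_{\sigma - 1})| \le c''\, a\delta$ for a consistent branch of $\theta_D$, and combining with the pre-stopping-time inequality $|\theta_D(S_{\sigma - 1}) - \theta_D(x)| < b_1 \delta$ yields (iii) with $b_2 = b_1 + c'' a$.

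For (i) and (ii), I would first choose $a, b_1$ of order unity so that planar Brownian motion in $\Disk$ started at $\psi(x)$ has probability at least some universal $\eta > 0$ of exiting $P$ through each of its four sides individually. This is a standard fact: a local M\"obius transformation sends a neighborhood of $\psi(x)$ inside $P$ to a region comparable to a Euclidean square of side $\delta$ in the upper half-plane, and two-dimensional Brownian motion started at the center of a square exits through each side with uniformly positive probability. By the conformal invariance of Brownian motion, $B$ started at $x$ exits $U$ through $\psi^{-1}(\partial \Disk \cap \partial P) \subset \partial D$ with probability at least $\eta$ (the Brownian analogue of (i)), and similarly through the preimage of the inner arc with probability at least $\eta$ (the Brownian analogue of (ii)). The positive continuous harmonic measure of the boundary-of-$D$ exit event now yields its discrete counterpart by Proposition~\ref{37}, which packages the required transfer from Brownian motion to random walk via the KMT coupling in the UBS setting. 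A parallel argument, coupled by Theorem~\ref{kmt} on the natural time scale of $B$ in $U$, handles the exit through the inner arc.

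The main obstacle will be ensuring the estimates are uniform across UBS domains of all geometries, since $\dist(x, \partial D)$ can range from $O(1)$ (where the KMT error $O(\log R)$ may dominate the geometric scale of $U$ in any naive rescaling argument) up to order $R$. The UBS lower bound $\dist(y, \partial D) \ge 1$ at lattice points together with Koebe's distortion estimates keep $|\psi'|$ and its oscillations under control; and when $\dist(x, \partial D)$ is of order one, $U$ itself has diameter of order one, so that the positive probabilities of the three outcomes can be verified directly by exhibiting short lattice paths of positive probability.
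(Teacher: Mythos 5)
Your treatment of part (iii) is sound, but the attack on part (ii) has a genuine gap, and the choice of working with the raw preimage $U = \psi^{-1}(P)\cap D$ creates a secondary problem throughout.

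The paper does not invoke a KMT argument anywhere in this lemma. Both parts (i) and (ii) are obtained from Proposition~\ref{37} alone, and the mechanism that makes this possible is replacing a preimage like your $U$ by its \emph{UBS covering}. Concretely: the paper first considers $E_1 = \psi^{-1}(\ball(\psi(x),2d_D(x)) \cap \D)$ and replaces it by $U_1 = \mathcal{U}_D(E_1)$, a UBS domain; then it considers a larger ball's preimage $E_2$ with covering $U_2 = \mathcal{U}_D(E_2)$. Since $U_1$ and $U_2$ are themselves simply connected UBS domains with lattice-adapted boundaries, Proposition~\ref{37} applies directly to them. Part (i) then follows by applying Proposition~\ref{37} to $U_1$ with $E = \partial U_1 \cap \partial D$; part (ii) follows by applying it to $U_2$ with $E$ equal to the ``top'' piece $\partial U_2 \setminus \partial D$ (which is a union of sides of closed unit squares, hence a legitimate portion of $\partial U_2$, with positive continuous harmonic measure by a straightforward polar-rectangle estimate). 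Exiting through this piece forces the walk to reach a lattice point $y$ with $d_D(y) \geq a\, d_D(x)$, which is exactly conclusion (ii). The point is that Proposition~\ref{37} is scale-free: its constants depend only on the continuous harmonic measure lower bound, not on $\inrad$, so the problem of uniformity across all sizes of $d_D(x)$ that your ``parallel KMT argument'' would face is eliminated at a stroke.

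Your plan, by contrast, would attempt to couple $S$ with $B$ via Theorem~\ref{kmt} to detect the exit through the inner arc of $U$. This cannot be made uniform: the KMT error is of fixed order $O(\log n)$ on the scale of $n$ steps, while the geometric scale of $U$ (roughly $d_D(x)\,\inrad(D)$) can be as small as a bounded multiple of the lattice spacing. You do acknowledge the bottom of this range (``exhibit short lattice paths''), but the intermediate regime where $\dist(x,\partial D)$ is comparable to but not much larger than the KMT error has no argument, and no single parameter choice in Theorem~\ref{kmt} covers all regimes at once. A second, related defect is that $U=\psi^{-1}(P)\cap D$ is not a lattice-adapted region, so ``random walk exits $U$ through the inner arc'' is not well posed: the walk jumps across $\partial U$, which is a smooth curve, and whether a given jump counts as ``through the inner arc'' or ``through an angular side'' is ambiguous. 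The UBS covering mechanism exists precisely to produce a simply connected lattice region on which all exit events are cleanly defined and to which the harmonic-measure comparison of Proposition~\ref{37} can be applied. If you replace $U$ by $\mathcal{U}_D(E)$ for the relevant $E$'s and apply Proposition~\ref{37} to those coverings for both (i) and (ii), your argument aligns with the paper's and the gap closes.
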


\begin{proof}
Let $x \in D \cap \Z^2$ satisfy $d_D(x) \leq 1/(140c_0^3)$, where $c_0$ is the maximum of the constant from Lemma~\ref{easyestimates} and $1$. In particular, if $z$, $w$ satisfy 
$$\max\{d_D(z), d_D(w)\} \le 7c_0^3 d_D(x),$$
then we have $|\theta_D(z)-\theta_D(w)| \le 2 |\psi(z)-\psi(w)|$, where $\psi=\psi_D$. 

Note first that there is a positive probability $\delta_1>0$ (independent of $d_D(x))$ that a planar Brownian motion started from $\psi(x)$ leaves $\D$ before exiting the ball $\mathcal{B}_1:=\mathcal{B}(\psi(x), 2d_D(x))$. Consequently, by conformal invariance, a planar Brownian motion from $x$ exits  $E_1:=\psi^{-1}(\mathcal{B}_1 \cap \D)$ at $\partial D$ with probability at least $\delta_1$. Let $U_1=\mathcal{U}_D(E_1)$ be the UBS covering with respect to $D$ of $E_1$. Notice that Lemma~\ref{easyestimates}~(ii) implies that any point $z \in U_1$ satisfies
\[
|\psi(z) - \psi(x)| \le 2c_0d_D(x).
\]

By Proposition~\ref{37} there is an $\ee_1>0$ such that simple random walk from $x$ exits $U_1$ at $\partial D$ at a vertex $v$ at distance at most $1$ from a point $w$ contained in $E_1$ with probability at least $\ee_1$. By Lemma~\ref{easyestimates}~(ii), for such $v$, we have
\begin{align*}
|\theta_D(v)-\theta_D(x)| &\le 2|\psi(v)-\psi(x)| \\
& \le 2c_0|\psi(w)-\psi(x)| \\
& \le  4c_0 d_D(x). 
\end{align*}

Consider now the pre-image in $D$ of the ball around $\psi(x)$ of radius $6c_0^2d_D(x)$, namely
\[
E_2:=\psi^{-1}(\mathcal{B}(\psi(x), 6c_0^2d_D(x)) \cap \D),
\]
and let $U_2=\mathcal{U}_D(E_2)$ be the UBS covering with respect to $D$ of $E_2$. We see that $U_1 \subset U_2$.
%We note that by the first paragraph, we know that simple random walk %from $x$ exits $U_2$ at $\partial D$ with probability $\ge \ee_1$.

Let $z \in U_2$. By Lemma~\ref{easyestimates}~(ii) we have that
\begin{align*}
d_D(z) & \le |\psi(z)-\psi(x)|+d_D(x) \le (6c_0^3+1)d_D(x) \\
 & \le 7c_0^3d_D(x).
\end{align*}
Similarly any $z \in U_2$ satisfies $|\theta_D(z)-\theta_D(x)| \le 12c_0^3d_D(x)$.

There is a strictly positive probability $\delta_2$ that planar Brownian motion from $\psi(x)$ exits the polar rectangle 
\[
\mathcal{R}:=\{z \in \D : |\arg(z) - \theta_D(x)| \le d_D(x), \, 1-7c_0^3 d_D(x) \le |z| \le 1\}
\]
at a point with $|z|=1-7c_0^3 d_D(x)$. Hence with probability at least $\delta_2$ planar Brownian motion from $x$ exits $\psi^{-1}(\mathcal{R}) \cap U_2$ through the ``top''; that is, at a point $w$ contained in $\partial U_2 \setminus \partial D$. Any such point satisfies (by the assumption $|\theta_D(w)-\theta_D(x)| \le d_D(x)$), 
%\begin{align*}
\[
d_D(w) \ge |\psi(w)-\psi(x)|-2d_D(x) 
 \ge (6c_0^2-2)d_D(x) 
 \ge 4c_0^2 d_D(x).
\]
%\end{align*}
By Lemma~\ref{easyestimates}~(i) this means that $w$ is on the boundary of a square $\mathcal{S}(y) \subset U_2$ such that
$d_D(y) \ge 4c_0 d_D(x)$. By Proposition~\ref{37} it follows that there is an $\ee_2 > 0$ such that with probability at least $\delta_2$ a simple random walk from $x$ visits a point $y$ with $d_D(y) \ge 4 c_0 d_D(x)$ before exiting $U_2$ (and thus before exiting $\{z: |\theta_D(z)-\theta_D(x)| \le 12c_0^3d_D(x)\}$).
Finally, using part~(ii) of Lemma~\ref{easyestimates} for the estimate on $b_2$, the desired conclusion follows by taking $a_0=1/(140c_0^3)$, $\ee=\min\{\ee_1, \ee_2\}$, $a=4c_0, b_1=12c_0^3$, and $b_2=24c_0^4$.
\end{proof}

The final preliminary result that is needed in order to prove Proposition~\ref{KLtheorem} is the analogue of Corollary~3.15 of~\cite{KozL1}. Assuming Lemma~\ref{app.lemma1}, the proof in the UBS setting is identical to the proof in the original Jordan setting. We refer to Proposition~3.14 and Corollary~3.15 of~\cite{KozL1} for more details. 

For any $a \in (0,1/2)$ and for any $\theta_1 < \theta_2$, 
let $\xi_D(a,\theta_1,\theta_2)$ be the first
 time $t \geq 0$ that a random walk leaves the pre-image of the polar rectangle
\[\{y \in V(D): d_D(y) \leq a, \; \theta_1 \leq \theta_D(y) \leq \theta_2\}.\]
Consider the probability that the random walk conditioned not to exit the polar rectangle at $\partial D$ exits at the ``top'': 
\[q_D(x,a,\theta_1,\theta_2) = \Prob^x \left(d_D(S_{\xi_D(a,\theta_1,\theta_2)}) > 
a \mid S_{\xi_D(a,\theta_1,\theta_2)} \in V(D)\right),\]
and note that if $\theta_1 \leq \theta_1' \leq \theta_2' \leq 
\theta_2$, then $q_D(x,a,\theta_1',\theta_2')\leq  q_D(x,a,\theta_1,\theta_2)$.
\begin{corollary}\label{apr8.cor1}
There exist $c$ and $\beta$ such that if $a \in (0,1/2)$, $r > 0$, $D$ is a UBS domain, and $x \in V(D)$, then
$$q_D(x,a,\theta_D(x) - ra, \theta_D(x) + ra) \geq 1 - c e^{-\beta r}.$$
\end{corollary}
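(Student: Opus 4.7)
The plan is to iterate Lemma~\ref{app.lemma1} and bound the cumulative angular displacement of the walk through all dyadic scales of $d_D$. Set $T_0=0$ and inductively $T_j = T_{j-1}+\sigma^{(j)}$, where $\sigma^{(j)}$ is the stopping time of Lemma~\ref{app.lemma1} applied at the current position $S_{T_{j-1}}$; iterate as long as $d_D(S_{T_{j-1}}) \le a_0$ and $S_{T_{j-1}} \in D$. By Lemma~\ref{app.lemma1}, at each such step the disjoint events $\{S_{T_j}\notin D\}$ and $\{d_D(S_{T_j})\ge a\,d_D(S_{T_{j-1}})\}$ each have conditional probability at least $\eps$, and on $\{S_{T_j}\in D\}$ the angular increment satisfies $|\theta_D(S_{T_j})-\theta_D(S_{T_{j-1}})|\le b_2 d_D(S_{T_{j-1}})$.

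For each integer $k\ge 0$ let $N_k$ denote the number of iterations $j$ with $d_D(S_{T_{j-1}}) \in [a^k d_D(x),a^{k+1}d_D(x))$. Since at each step the total conditional probability of exit through $\partial D$ or strict scale advance exceeds $2\eps$, the $N_k$ are stochastically dominated by i.i.d.\ geometric random variables of parameter $2\eps$. With $L_{\max}=\lceil\log_a(a_0/d_D(x))\rceil$, so that $a^{L_{\max}}d_D(x)\asymp a_0$, the cumulative angular displacement along the iteration is bounded by
\[
b_2 \sum_{k=0}^{L_{\max}} a^{k+1} d_D(x)\, N_k \;\le\; c_1\, a \sum_{j=0}^{L_{\max}} a^{-j}\, N_{L_{\max}-j}.
\]
Because $a > 1$, the exponential moment $\E[\exp(t\sum_j a^{-j} N_{L_{\max}-j})]$ is uniformly bounded in $L_{\max}$ for all sufficiently small $t>0$ (by direct comparison with a geometric series of constants), and Markov's inequality gives $\Prob^x(\text{total angular displacement} \ge ra) \le c_2 e^{-\beta r}$. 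The terminal stretch in which $d_D$ moves from $a_0$ up to $a$ (needed only when $a>a_0$) is handled by $O(1)$ further bounded-spread excursions and contributes only a multiplicative constant.

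To convert this unconditional bound on $p_{\text{side}}$ into the conditional statement $1 - q_D = p_{\text{side}}/(p_{\text{top}}+p_{\text{side}}) \le c e^{-\beta r}$, I would rerun the iteration under the Doob $h$-transform conditioning on never exiting $\partial D$. Because the unconditional events ``exit $\partial D$'' and ``scale advance'' are disjoint at each step, a boundary Harnack-type estimate for $h(y) = \Prob^y(\text{exit through top or side})$ in UBS domains, combined with Koebe distortion applied to the scale-adjusted balls appearing in the proof of Lemma~\ref{app.lemma1}, gives a uniform lower bound on the advance probability under the conditional measure, and the geometric domination of $N_k$ and the exponential-moment bound then transfer directly. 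Equivalently, one may partition the polar rectangle into $\lceil r/K\rceil$ angular strips of width $Ka$ (with $K$ larger than the implicit constant from the single-iteration displacement bound) and show that at each first entrance into a new strip the walk has probability at least $\delta>0$ of reaching $\{d_D\ge a\}$ or $\partial D$ before entering the next strip, yielding a factor $1-\delta$ per strip via the strong Markov property. The main obstacle will be carrying out the boundary Harnack step with constants uniform in $D$, $x$, and $a$; the strip-crossing route sidesteps this at the price of a more elaborate setup and requires instead uniform estimates for the single-strip ratio through the strong Markov analysis.
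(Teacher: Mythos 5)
The paper does not give its own proof of this corollary: it states that once Lemma~\ref{app.lemma1} is in hand the argument is word-for-word identical to Proposition~3.14 and Corollary~3.15 of~\cite{KozL1}, and refers the reader there. The structure of that argument is much closer to the second route you sketch (the strip-crossing version, where one passes through nested polar rectangles of angular half-widths $Ka, 2Ka, \ldots$, uses the monotonicity of $q_D$ in the angular window noted just before the corollary, and loses a fixed conditional factor $\rho<1$ at each crossing) than to your primary route via summing displacements over dyadic scales.

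Your primary route as written has a genuine gap in the step ``the $N_k$ are stochastically dominated by i.i.d.\ geometric random variables of parameter $2\eps$.'' Lemma~\ref{app.lemma1} only gives that at each iteration the walk exits $D$ with conditional probability $\geq\eps$ and advances to a strictly higher scale with conditional probability $\geq\eps$. On the remaining event (exit through the ``angular side'' of the mini-rectangle) the quantity $d_D$ may drop by an arbitrary amount, so the scale index is not monotone along the iteration, a given scale $k$ can be revisited many non-consecutive times, and $N_k$ is a sum over an unbounded number of excursions rather than a single geometric. In particular the $N_k$ are not jointly dominated by independent geometrics and the factorization $\E[\exp(t\sum_j a^{-j}N_{L_{\max}-j})]=\prod_j\E[\exp(t a^{-j}N_{L_{\max}-j})]$ you implicitly use does not go through. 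A separate normalization issue: you stop iterating at $d_D \ge a_0$ (the constant of Lemma~\ref{app.lemma1}) and therefore end up with a displacement whose tail scale is $\asymp a_0$, whereas the event you need to rule out is an angular displacement $\geq r\cdot a_{\mathrm{cor}}$ where $a_{\mathrm{cor}}\in(0,1/2)$ is the radial height of the polar rectangle; for $a_{\mathrm{cor}}\ll a_0$ your tail bound reads $\exp(-\beta r a_{\mathrm{cor}}/a_0)$, which is not the claimed $\exp(-\beta r)$. The iteration must be stopped at $d_D \ge \min(a_{\mathrm{cor}},a_0)$, with the residual climb from $a_0$ to $a_{\mathrm{cor}}$ (when $a_{\mathrm{cor}}>a_0$) argued separately, as you begin to do.

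Finally, the passage from the unconditional bound on $p_{\mathrm{side}}$ to the conditional probability $1-q_D=p_{\mathrm{side}}/(p_{\mathrm{top}}+p_{\mathrm{side}})$ is the substantive content of Corollary~3.15 in~\cite{KozL1} and cannot be dismissed; the $h$-transform route requires a boundary-Harnack principle with constants uniform over UBS domains, which is not established here, while the strip-crossing route requires a uniform one-strip escape estimate (essentially a repackaging of Proposition~3.14 of~\cite{KozL1}) rather than Lemma~\ref{app.lemma1} alone. You correctly flag both obstacles but neither is carried out, and together with the domination gap they leave the proposal short of a complete proof.
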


We can now complete the proof of Proposition~\ref{KLtheorem}. Given the technical results that we have just discussed, the proof essentially follows as the proof of Proposition~3.10 of~\cite{KozL1}. (Unlike in that paper, however, we are not considering any two arbitrary points in the domains but rather one point near the boundary and one point near the origin.) Consequently, we will not give all the details in the proof below, but rather highlight the steps that affect the rate and show how the exponent of $1/4$ occurs. We will begin by assuming that $p \in (0,1/2)$ is arbitrary, and then we will derive a number of estimates in terms of $p$. As various steps in the proof additional restrictions on $p$ will be added, and at the end we will optimize to find $p=1/4$. 

Before we give the details, let us give a heuristic argument ignoring the particular error terms. If $D^*=\{x \in V :  g_D(x) \ge c_0R^{-p}\}$ then  $g_D(z) \approx R^{-p}$ for $z \in \partial D^*$. By Theorem~\ref{main_green_thm} the same is true for a constant times $G_D(z)$. Let $y \in V \setminus D^*$. If $\eta$ is the first time $S$ exits $V\setminus D^*$, we can write $G_D(y)  \approx  \text{const.}\,R^{-p}\Prob^y(S_{\eta} \in D^*)$. Also, by Corollary~\ref{apr8.cor1}, a simple random walk from $y$ that hits $D^*$ before exiting $V$ is likely to do so without the argument of its conformal image in $\D$ changing too much: $\Prob^y(S_{\eta} \in K(y)) \approx \Prob^y(S_{\eta} \in D^*)$ where $K(y)$ is a suitable subset of $D^*$ where $e^{i\theta(z)} \approx e^{i\theta(y)}$. Note that 
\[
g_{\D}(u,v) \approx g_{\D}(v) \frac{1-|u|^2}{|u-e^{i\theta(v)}|}
\]
for $v$ close to the boundary. Hence, by using Theorem~\ref{main_green_thm} and conformal invariance of the Green's function, we expect that 
\begin{align*}
G_D(x,y) &\approx \sum_{z \in K(y)} G_D(x,z) \Prob^y(S_{\eta} = z) \\
 &\approx \sum_{z \in K(y)} \frac{2}{\pi}\, g_D(x,z) \Prob^y(S_{\eta} = z) \\
& \approx \sum_{z \in K(y)}  \frac{2}{\pi}\, g_D(z) \frac{1-|\psi_D(x)|^2}{|\psi_D(x) - e^{i\theta(z)}|^2}\Prob^y(S_{\eta} = z) \\
& \approx \frac{2}{\pi}\, R^{-p}\Prob^y(S_{\eta} \in D^*)  \frac{1-|\psi_D(x)|^2}{|\psi_D(x) - e^{i\theta(y)}|^2} \\ 
& \approx G_D(y) \frac{1-|\psi_D(x)|^2}{|\psi_D(x) - e^{i\theta(y)}|},
\end{align*}  
for suitable error terms depending on $p$.

\begin{proof}[Proof of Proposition~\ref{KLtheorem}]
Let $D$ be a simply connected UBS domain, write $R=\inrad(D)$, and let $V=V(D) = D \cap \Z^2$. Suppose further that $0<p<1/2$ and note that if $y \in V$ with $|\psi_D(y)| \ge 1 - R^{-p}$, then there exists some universal constant $c_0$ such that $g_D(y) < c_0R^{-p}$. Using this $c_0$, let
$$D^{*} = \{x \in V : g_D(x) \ge c_0R^{-p}\}.$$
Finally, let 
\[\eta = \eta(D) = \min\{j \ge 0: S_j \in D^{*} \cup V^c\}.\]

Let $z$, $w \in V$ with $z \in D^{*}$, $w \not \in D^{*}$, and $|z-w|=1$ so that 
\begin{equation}\label{p42eq1}
g_D(w) < c_0R^{-p} \leq g_D(z).
\end{equation}
However, even though $z \in D^{*}$, the Green's function $g_D(z)$ cannot be that much larger than $c_0R^{-p}$ since $|z-w|=1$ for some $w \not \in D^{*}$. We will now quantify this statement.
If we set $u=\psi_D(z)$ and let $f=\psi_D^{-1}$, then the  Koebe distortion theorem implies that 
$$|f'(u)| \ge c(1-|\psi_D(z)|) |f'(0)| \ge cR^{-p}|f'(0)|.$$
From the Koebe one-quarter theorem we know $|f'(0)| \ge \inrad(D)/4 = R/4$ and so
\begin{equation}\label{p42eqdist}
|f'(u)| \ge cR^{1-p}.
\end{equation}
The Koebe one-quarter theorem thus implies that
$\ball(z, R^{-p}|f'(u)|/4) \subset D$, and so we conclude from~\eqref{p42eqdist} that
$\ball(z, cR^{1-2p}) \subset D$.  Since $g_D$ is a positive bounded harmonic function in $\ball(z, cR^{1-2p})$ we can use Exercise~2.17 of~\cite{SLEbook} to conclude
$$g_D(z)=g_D(w)+O(R^{-(1-2p)})$$
assuming that the error term is not larger than the leading term which is true as long as $p<1-2p$. Thus, we have introduced a restriction on $p$, namely that $0<p<1/3$. Combined with~\eqref{p42eq1} we conclude that 
\[g_D(z)=c_0R^{-p}+O(R^{-(1-2p)}).\]

Using Theorem~\ref{main_green_thm}, it now follows that for any $0<\eps<1/2$,
$$G_D(z) = (2/\pi)\, c_0R^{-p} +O(R^{-(1-2p)}) +O(|z|^{-2}) + O(R^{-(1/2-\eps)}).$$
Thus, since $\dist(z, V \setminus D^{*}) =1$, it follows that $|z|>R^{1/4}$ which implies
$$G_D(z) = (2/\pi)\, c_0R^{-p} + O(R^{-(1/2-\eps)}) = (2/\pi)\, c_0R^{-p}[1 + O(R^{p-(1/2-\eps)})]$$
and similarly for $G_D(w)$. We also find another restriction on $p$, namely that $p-(1/2-\eps)<0$. Therefore, using the strong Markov property, for any $y \in V \setminus D^{*}$, 
\begin{equation} \label{sept13.3}
G_D(y) = (2/\pi)\, c_0R^{-p}\, \Prob^y(S_\eta \in D^{*}) \, [1 + O(R^{p-(1/2-\eps)})]. 
\end{equation}

In a similar fashion, note that if $x\in V$ with $|\psi_D(x)| \le \rho$ and 
 $z\in D^{*}$, then $g_D(x,z) \ge cR^{-p}$ for some $c$,
  and hence by Theorem~\ref{main_green_thm},  if $|z-x|\geq R^{1/4}$, then
 \begin{equation*}%\label{EJPaug10.eq1}
 G_D(x,z) = (2/\pi)\, g_D(x,z) \, [1 + O(R^{p-(1/2-\eps)})].
 \end{equation*}
If $x \in V$ with $|\psi_D(x)| \le \rho$ and $y \in V\setminus D^{*}$, then there exists 
some $0<\delta<1$ such that $|\psi_D(x) - \psi_D(y)| \geq \delta$. 

Using~\eqref{defngreen} and the fact that the Green's function is conformally invariant, one can check that if $\zeta'=\psi_D(y)=(1-r)
 e^{i\theta}$ and $\zeta=\psi_D(x) \in \Disk$ with $|\zeta-\zeta'|\geq r$, then
\begin{equation}\label{oct30.1}
g_D(x,y) = g_{\D}(\zeta,\zeta') = \frac{g_{\D}(\zeta')\,(1-|\zeta|^2)}{|\zeta-e^{i \theta}|^2}\, 
\left[1+O\left(\frac{r}{|\zeta-\zeta'|}\right)\right].
\end{equation}

Let $y \in V \setminus D^*$ and let $z \in D^*$ be as above with one neighbor in $V \setminus D^*$. Assume also that $z \in K=K(y):=\{z \in D^{*}, \, |\psi_D(z) - \psi_D (y)| \leq c_1 R^{-p} \log R\}$. Then~\eqref{oct30.1} and the estimate on $g_D(z)$ imply that
\begin{equation*}%\label{EJPaug10.eq3}
g_D(x,z) 
= \frac{c_0R^{-p} \,(1-|\psi_D(x)|^2)}{|\psi_D(x)-e^{i\theta_D(y)}|^2}\,
\left[\, 1 + O(R^{-p}  \log R ) \,\right].
\end{equation*}

It now follows from Corollary~\ref{apr8.cor1} that there exists a $c_1$ such that if 
$$\xi = \xi(D, c_1) = \min \{ j \ge 0: S_j \not\in V \,\, \text{or} \,\, |\psi_D(S_j) - \psi_D(y)| \ge c_1 R^{-p}\log R\},$$
then
$$\Prob^y(\xi < \eta) \le c_1 R^{-5} \Prob^{y}(S_{\eta} \in D^{*} ).$$
In particular
\[
\Prob^{y}(S_{\eta} \in K(y))=(1-O(R^{-5}))\Prob^{y}(S_{\eta} \in D^{*}).
\]

Hence, by stopping the random walk from $y$ when it hits $D^{*}$  
it can be shown that
\begin{align*}
G_D(x,y)&=\Prob^y(S_\eta \in D^{*})\, \frac{(2/\pi)\, c_0R^{-p}\, (1-|\psi_D(x)|^2)}
{|\psi_D(x)-e^{i\theta_D(y)}|^2}\\
&\qquad\cdot [\,1 + O(R^{p-(1/2-\eps)}) \,]\cdot [\, 1 + O(R^{-p} \, \log R )\, ].
\end{align*}
Combining this with~\eqref{sept13.3} gives
\begin{align*}
\frac{G_D(x,y)}{G_D(y)}&= \frac{1-|\psi_D(x)|^2}
{|\psi_D(x)-e^{i\theta_D(y)}|^2} \cdot
\frac{ [\, 1 + O(R^{p-(1/2-\eps)}) \,] \!\cdot\! [\, 1 + O(R^{-p} \, \log R)\, ]} 
{[\,1+O(R^{p-(1/2-\eps)})\,]}\\
&= \frac{1-|\psi_D(x)|^2}
{|\psi_D(x)-e^{i\theta_D(y)}|^2} \cdot
[\, 1 + O(R^{p-(1/2-\eps)}) \,] \!\cdot\! [\, 1 + O(R^{-p} \, \log R)\, ].
\end{align*}
Thus, solving $p-(1/2-\eps)=-p$ for any $0<\eps<1/2$ gives $p<1/4$ and so choosing $p=1/4-\epsilon'$ for 
any $0<\epsilon'<1/4$ yields~\eqref{KLestimate} completing the proof.
\end{proof}

\subsection*{Acknowledgements}
The authors would like to express their gratitude to the Banff
International Research Station for Mathematical Innovation and
Discovery (BIRS) and the Mathematisches Forschungsinstitut Oberwolfach
(MFO) where much of this work was carried out. In particular, the authors benefited from participating in a \emph{Research in Teams} programme at BIRS from August~24 to August~31, 2008, and at a \emph{Research in Pairs} programme at MFO from August~2 to August~15, 2009. Thanks are also owed to Robert Masson for useful discussions. Finally, gratitude is also due to an anonymous referee for excellent comments and suggestions based on a thorough reading of our paper.

Bene\v{s} is supported in part by PSC-CUNY Awards \#60130-3839 and \#62408-0040. This work was carried out while Johansson Viklund was supported by the Simons Foundation, the Knut and Alice Wallenberg 
Foundation, and in part by the Swedish Royal Academy of Sciences. Kozdron is supported in part by the Natural Sciences and Engineering Research Council (NSERC) of Canada.

%%%%%%%%%%%%%%%%%%%%%%%%%%%%%%%%%%%%%%%%%%%%%%%%%%%%%%%%%%
%% Section: Bibliography
%%
%%%%%%%%%%%%%%%%%%%%%%%%%%%%%%%%%%%%%%%%%%%%%%%%%%%%%%%%%%


\begin{thebibliography}{10}

\bibitem{auer}
P.~Auer.
\newblock Some hitting probabilities of random walks on {$Z^2$}.
\newblock In I.~Berkes, E.~Cs\'aki, and P.~R\'ev\'esz, editors, {\em Limit
  Theorems in Probability and Statistics}, volume~57 of {\em Colloquia
  Mathematica Societatis J\'{a}nos Bolyai}, pages 9--25, Budapest, Hungary,
  1990. North-Holland.

\bibitem{benesnotes}
C.~Bene\v{s}.
\newblock Some Estimates for Planar Random Walk and {B}rownian Motion.
\newblock Preprint, 2006. Available online at \texttt{arXiv:math/0611127}.

\bibitem{Borovkov}
A.~A. Borovkov.
\newblock {On the rate of convergence for the invariance principle}.
\newblock {\em Theory Probab. Appl.}, 18:207--225, 1973.

\bibitem{chelkak_smirnov}
D.~Chelkak and S.~Smirnov.
\newblock Universality in the 2D Ising model and conformal invariance of fermionic observables.
\newblock {\em Invent. Math.}, 189:515--580, 2012.

\bibitem{Csorgo}
M.~Cs\"org\H{o} and P.~R\'ev\'esz.
\newblock {\em Strong Approximations in Probability and Statistics}.
\newblock Academic Press, New York, NY, 1981.

\bibitem{HDC-smirnov}
H.~Duminil-Copin and S.~Smirnov.
\newblock Conformal invariance of lattice models.
\newblock Preprint, 2011. Available online at \texttt{arXiv:1109.1549}. 

\bibitem{FukU1}
Y.~Fukai and K.~Uchiyama.
\newblock Potential kernel for two-dimensional random walk.
\newblock {\em Ann. Probab.}, 24:1979--1992, 1996.

\bibitem{garnett_marshall}
J.~B. Garnett and D.~E. Marshall.
\newblock {\em Harmonic Measure}.
\newblock Cambridge University Press, New York, NY, 2005.

\bibitem{haeusler}
E.~Haeusler.
\newblock {An Exact Rate of Convergence in the Functional Central Limit Theorem
  for Special Martingale Difference Arrays}.
\newblock {\em Z. Wahrscheinlichkeitstheorie verw. Gebiete}, 65:523--534, 1984.

\bibitem{Hall}
P.~Hall and C.~C. Heyde.
\newblock {\em Martingale Limit Theory and Its Applications}.
\newblock Academic Press, New York, NY, 1980.

\bibitem{FJVROC}
F. Johansson Viklund. 
\newblock Convergence Rates for Loop-Erased Random Walk and other Loewner Curves.
\newblock  Preprint, 2012. Available online at \texttt{arXiv:1205.5734}. 

\bibitem{kesten}
H.~Kesten.
\newblock Relations Between Solutions to a Discrete and Continuous {D}irichlet
  Problem.
\newblock In R.~Durrett and H.~Kesten, editors, {\em Random Walks, Brownian Motion and Interacting Particle Systems}, volume~28 of {\em Progress in Probability}, pages 309--321,  
  Boston, MA, 1991. Birkh\"auser.

\bibitem{kmt2}
J.~Koml\'{o}s, P.~Major, and G.~Tusn\'{a}dy.
\newblock An Approximation of Partial Sums of Independent {RV's}, and the
  Sample {DF}. {II}.
\newblock {\em Z. Wahrscheinlichkeitstheorie verw. Gebiete}, 34:33--58, 1976.

\bibitem{KozL1}
M.~J. Kozdron and G.~F. Lawler.
\newblock Estimates of Random Walk Exit Probabilities and Application to
  Loop-Erased Random Walk.
\newblock {\em Electron. J. Probab.}, 10:1442--1467, 2005.

\bibitem{LawlerGreen}
G.~F. Lawler.
\newblock {\em Intersections of Random Walks}.
\newblock Birkh\"{a}user, Boston, MA, 1991.

\bibitem{SLEbook}
G.~F. Lawler.
\newblock {\em Conformally Invariant Processes in the Plane}, volume 114 of
  {\em Mathematical Surveys and Monographs}.
\newblock American Mathematical Society, Providence, RI, 2005.

\bibitem{LSW-43}
G.~F.~Lawler, O.~Schramm, and W.~Werner.
\newblock The Dimension of the Planar Brownian Frontier is 4/3.
\newblock {\em Math. Res. Lett.}, 8:401--411, 2001.

\bibitem{LawSW1}
G.~F. Lawler, O.~Schramm, and W.~Werner.
\newblock Values of {B}rownian intersection exponents, {I}: {H}alf-plane
  exponents.
\newblock {\em Acta Math.}, 187:237--273, 2001.

\bibitem{LawSW2}
G.~F. Lawler, O.~Schramm, and W.~Werner.
\newblock Values of {B}rownian intersection exponents, {II}: {P}lane exponents.
\newblock {\em Acta Math.}, 187:275--308, 2001.

\bibitem{LawSW3}
G.~F. Lawler, O.~Schramm, and W.~Werner.
\newblock Values of {B}rownian intersection exponents {III}: {T}wo-sided
  exponents.
\newblock {\em Ann. Inst. H. Poincar\'{e} Probab. Statist.}, 38:109--123, 2002.

\bibitem{LSW-aop}
G.~F.~Lawler, O.~Schramm, and W.~Werner.
\newblock Conformal invariance of planar loop-erased random walks and uniform
  spanning trees.
\newblock {\em Ann. Probab.}, 32:939--995, 2004.

\bibitem{pommerenke}
C.~Pommerenke.
\newblock {\em Boundary Behaviour of Conformal Maps}, volume 299 of {\em
  Grundlehren der mathematischen Wissenschaften}.
\newblock Springer-Verlag, New York, NY, 1992.

\bibitem{rohde_schramm}
S.~Rohde and O.~Schramm.
\newblock Basic properties of {SLE}.
\newblock {\em Ann. Math.}, 161:883--924, 2005.

\bibitem{schramm}
O.~Schramm.
\newblock Scaling limits of loop-erased random walks and uniform spanning
  trees.
\newblock {\em Israel J. Math.}, 118:221--288, 2000.

\bibitem{schrammICM}
O.~Schramm.
\newblock Conformally invariant scaling limits: an overview and a collection of
  problems.
\newblock In M.~Sanz-Sol\'e, J.~Soria, J.~L. Varona, and J.~Verdera, editors,
  {\em Proceedings of the International Congress of Mathematicians, Madrid,
  Spain, 2006. Volume I}, pages 513--543. European Mathematical Society,
  Zurich, Switzerland, 2007.

\bibitem{SchShef}
O.~Schramm and S.~Sheffield.
\newblock Harmonic explorer and its convergence to SLE$_4$.
\newblock {\em Ann. Probab.}, 33:2127--2148, 2005.

\bibitem{SchShef2}
O.~Schramm and S.~Sheffield.
\newblock Contour lines of the two-dimensional discrete Gaussian free field.
\newblock {\em Acta Math.}, 202:21--137, 2009.

\bibitem{SW}
O.~Schramm and D.~B.~Wilson.
\newblock {SLE} coordinate changes.
\newblock {\em New York J. Math.}, 11:659--669, 2005.

\bibitem{smirnov_perc}
S.~Smirnov.
\newblock Critical percolation in the plane.
\newblock Preprint, 2001. Available online at \texttt{arXiv:0909.4499}.

\bibitem{smirnov_CR}
S.~Smirnov.
\newblock Critical percolation in the plane: conformal invariance, Cardy's formula, scaling limits.
\newblock {\em C. R. Acad. Sci. Paris S\'er. I Math.}, 333:239--244, 2001.

\bibitem{smirnov_ICM}
S.~Smirnov.
\newblock Discrete Complex Analysis and Probability.
\newblock In R.~Bhatia, editor, {\em Proceedings of the International Congress of Mathematicians, Hyderabad, India, 2010. Volume~I}, pages 595--621. Hindustan Book Agency, New Delhi, India, 2010

\bibitem{smirnov_AnnMath}
S.~Smirnov.
\newblock Conformal invariance in random cluster models.\ I.\ Holmorphic fermions in the Ising model.
\newblock {\em Ann. of Math. (2)}, 172:1435--1467, 2010.


\end{thebibliography}
\end{document}